\theoremstyle{definition}
\theoremstyle{theorem}
\newtheorem*{questionwn}{Question}
\newtheorem{theoremalpha}{Theorem}
\theoremstyle{corollary}
\definecolor{uuuuuu}{rgb}{0.26666666666666666,0.26666666666666666,0.26666666666666666}
\newtheorem{theorem}{Theorem}[section]
\newtheorem{main theorem}{Main Theorem}
\newtheorem{proposition}[theorem]{Proposition}
\newtheorem{lemma}[theorem]{Lemma}
\newtheorem{theorem*}{Theorem}
\newtheorem{corollary*}[theorem*]{Corollary}
\newtheorem{conjecture*}[theorem*]{Conjecture}
\theoremstyle{definition}
\newtheorem{example}[theorem]{Example}
\newtheorem{definition}[theorem]{Definition}
\newtheorem{definition-lemma}[theorem]{Definition-Lemma}
\newtheorem{remark}[theorem]{Remark}
\newcommand\R{\mathbb{R}}
\newcommand\Q{\mathbb{Q}}
\newcommand\C{\mathbb{C}}
\renewcommand\P{\mathbb{P}}
\newcommand\eps{\varepsilon}
\renewcommand\epsilon{\varepsilon}
\newcommand{\mc}{\mathcal}
\DeclareMathOperator{\codim}{codim}
\DeclareMathOperator{\im}{Im}
\DeclareMathOperator{\ord}{ord}
\DeclareMathOperator{\mult}{mult}
\DeclareMathOperator{\Supp}{Supp}
\DeclareMathOperator{\vol}{vol}
\DeclareMathOperator{\SB}{SB}
\newcommand{\bm}{\mathbf B_-}  
\newcommand{\bp}{\mathbf B_+}  
\newcommand{\okbd}{\Delta}
\newcommand{\oklim}{\Delta^{\lim}}
\def\axis{\operatorname{axis}}
\begin{document}

\title[Local numerical equivalences and Okounkov bodies in higher dimensions]{Local numerical equivalences and Okounkov bodies \\in higher dimensions}

\author{Sung Rak Choi} \address{Department of Mathematics, Yonsei University, Seoul, Korea} \email{sungrakc@yonsei.ac.kr}

\author{Jinhyung Park}
\address{Department of Mathematics, Sogang University, Seoul, Korea}
\email{parkjh13@sogang.ac.kr}

\author{Joonyeong Won} \address{Center for Mathematical Challenges, Korea Institute for Advanced Study, Seoul, Korea} \email{leonwon@kias.re.kr}

\subjclass[2010]{14C20, 52A20}
\date{\today}
\keywords{Okounkov body, local numerical equivalence, divisorial Zariski decomposition}
\thanks{S. Choi and J. Park were partially supported by the National Research Foundation of Korea (NRF-2016R1C1B2011446). J. Won was supported by the National Research Foundation of Korea (NRF-2020R1A2C1A01008018) and a KIAS Individual Grant (SP037003) via the Center for Mathematical Challenges at Korea Institute for Advanced Study.}

\begin{abstract}
We continue to explore the numerical nature of the Okounkov bodies focusing on the local behaviors near given points.
More precisely, we show that the set of Okounkov bodies of a pseudoeffective divisor with respect to admissible flags centered at a fixed point determines the local numerical equivalence class of divisors which is defined in terms of refined divisorial Zariski decompositions. Our results extend Ro\'{e}'s work \cite{Roe} on surfaces to higher dimensional varieties although our proof is essentially different in nature.
\end{abstract}

\maketitle


\section{Introduction}
Lazarsfeld--Musta\c{t}\u{a} \cite{lm-nobody} and Kaveh--Khovanskii \cite{KK} independently introduced Okounkov bodies based on the pioneering works of Okounkov \cite{O1, O2}. Let $X$ be a smooth projective variety of dimension $n$, and $D$ be a divisor on $X$.  An \emph{admissible flag} $Y_\bullet$ on $X$ is a sequence of irreducible subvarieties
$$
Y_\bullet: X=Y_0\supseteq Y_1\supseteq \cdots\supseteq Y_{n-1}\supseteq Y_n=\{ x\}
$$
where each $Y_i$ is of codimension $i$ in $X$ and is smooth at the point $x$. Using a given admissible flag $Y_\bullet$ on $X$, we can define a valuation-like function $\nu_{Y_\bullet} \colon |D|_{\R}\to \R^{n}$. The \emph{Okounkov body} $\okbd_{Y_\bullet}(D)$ of a divisor $D$ with respect to an admissible flag $Y_\bullet$ is defined as the closure of the convex hull of the image $\nu_{Y_\bullet}(|D|_{\R})$ in the Euclidean space $\R^n$. See Section \ref{okbd_sec} for the precise construction of the Okounkov body.

By \cite[Proposition 4.1]{lm-nobody} and \cite[Theorem A]{Jow}, if $D, D'$ are big divisors on a smooth projective variety $X$, then $D$ is numerically equivalent to $D'$ if and only if $\okbd_{Y_\bullet}(D)=\okbd_{Y_\bullet}(D')$ for all admissible flags $Y_\bullet$ on $X$. This result is extended to the limiting Okounkov bodies of pseudoeffective divisors in \cite[Theorem C]{CHPW-okbd I}. Therefore, in principle, all the numerical information of a pseudoeffective divisor $D$ must be contained in the set of Okounkov bodies of $D$ with respect to all the admissible flags. Based on these results, there have been extensive and thorough studies of asymptotic numerical positivity of divisors via Okounkov bodies.
The recent results tell us that the ``local'' numerical properties such as moving Seshadri constant $\epsilon(||D||;x)$ can be computed from the Okounkov bodies $\okbd_{Y_\bullet}(D)$ by fixing $Y_n$ at a given point $x$ of $X$ (see \cite{CHPW-asyba}, \cite{AV-loc pos, AV-loc pos3}).
One can also extract the ``global'' numerical properties such as ampleness, nefness, and the asymptotic base loci $\bp(D)$, $\bm(D)$ from the Okounkov bodies $\okbd_{Y_\bullet}(D)$ by varying admissible flags $Y_\bullet$ (see \cite{CHPW-asyba}, \cite{AV-loc pos, AV-loc pos2, AV-loc pos3}). We remark that even in this global case, the results are based on the analysis of Okounkov bodies for admissible flags $Y_\bullet$ with a fixed center $Y_n=\{ x \}$.

Now, it is natural to wonder what other local information can be obtained from the set of Okounkov bodies for $Y_\bullet$ with a fixed center $Y_n$. In other words, we may ask the following:
\begin{questionwn}
What local numerical properties of a pseudoeffective divisor $D$ are precisely contained in the set of Okounkov bodies of $D$ with respect to admissible flags centered at a given fixed point?
\end{questionwn}

This paper is devoted to answering this question in full generality.
Naturally, the Okounkov bodies described in question are supposed to give some kind of  \emph{local} numerical data of the divisor $D$ at the given point. The local numerical properties in question will be clarified by defining the local numerical equivalence relation on pseudoeffective divisors using the divisorial Zariski decomposition. We then prove that the local numerical equivalence class of a divisor at a given point can be read off from the Okounkov bodies constructed with the so-called \emph{induced} admissible flags centered at the point. In the surface case, Ro\'{e} in \cite{Roe} used the notion of clusters at infinitely near points to extract the local numerical equivalence of divisors from the Okounkov bodies. Rather than generalizing Ro\'{e}'s notion of clusters into higher dimensions for our purpose, we simply analyze the Okounkov bodies varying the admissible flags. We believe that this approach is more natural. 

\medskip

Turning to the details, let $X$ be a smooth projective variety of dimension $n$, and $D, D'$ be pseudoeffective divisors on $X$. Recall that two divisors $D, D'$ are \emph{numerically equivalent} and write $D \equiv D'$ if and only if $D\cdot C=D'\cdot C$ for every irreducible curve $C$ on $X$.
It can be easily checked that even if we consider only the curves through a fixed point, it still defines the same numerical equivalence relation.
A correct definition for local numerical equivalence inspired from the theory of Okounkov bodies is suggested by Ro\'{e} (\cite[Definition 4]{Roe}) on surfaces using the Zariski decompositions.
In higher dimensions, we instead use the divisorial Zariski decomposition which can be considered as a natural generalization of the Zariski decomposition in dimension $2$.
Let $D=P+N$ be the divisorial Zariski decomposition.
For a fixed point $x \in X$, we further decompose the negative part $N=N_x + N_x^c$ into the effective divisors $N_x$, $N^c_x$ such that every irreducible component of $N_x$ passes through $x$ and $x \not\in \Supp(N_x^c)$. We say that the decomposition
$$
D=P+N_x+N_x^c
$$
is the \emph{refined divisorial Zariski decomposition of $D$ at a point $x$}.

\begin{definition}
Let $D, D'$ be pseudoeffective divisors on a smooth projective variety $X$ with the refined divisorial Zariski decompositions $D=P+N_x+N_x^c, ~D'=P'+N_x' +{N_x^c}'$ at a point $x \in X$. We say that $D, D'$ are \emph{numerically equivalent near $x$} and write $D \equiv_x D'$ if $P \equiv P'$ and $N_x = N_x'$.
\end{definition}

Proposition \ref{prop_localnumequiv} presents other equivalent conditions of the local numerical equivalence. Clearly, $D\equiv_x D'$ for a point $x$ does not necessarily imply $D\equiv D'$. See Section \ref{locnum_sec} for more details.

\medskip

To extract the local numerical properties of divisors from the Okounkov bodies, it is necessary to consider the Okounkov bodies defined on higher birational models as well.
Thus we consider the following admissible flags on higher birational models.

\begin{definition}[{cf. \cite[Definition 2]{Roe}}]\label{def of flags}
Let $f \colon \widetilde{X} \to X$ be a birational morphism between smooth projective varieties of dimension $n$, and $x \in X$ be a point.
An admissible flag $\widetilde{Y}_\bullet$ on $\widetilde{X}$ is said to be \emph{centered at $x$} if $f(\widetilde{Y}_n)=\{ x\}$.
An admissible flag $\widetilde{Y}_\bullet$ on $\widetilde{X}$ is said to be \emph{proper} (respectively, \emph{infinitesimal}) over $X$ if $\codim f(\widetilde{Y}_i) = i$ (respectively, $\codim f(\widetilde{Y}_1)=n$).
\end{definition}

The first main result of this paper gives a generalization of \cite[Theorem 1]{Roe} into higher dimensions.

\begin{theoremalpha}\label{main1}
Let $D, D'$ be big divisors on a smooth projective variety $X$, and $x \in X$ be a point. Then the following are equivalent:
\begin{enumerate}[wide, labelindent=3pt]
\item[$(1)$] $D \equiv_{x} D'$, that is, $D, D'$ are numerically equivalent near $x$.
\item[$(2)$] $\okbd_{\widetilde{Y}_{\bullet}}(f^*D)=\okbd_{\widetilde{Y}_{\bullet}}(f^*D')$ for every admissible flag $\widetilde{Y}_{\bullet}$ centered at $x$ defined on a smooth projective variety $\widetilde{X}$ with a birational morphism $f \colon \widetilde{X} \to X$.
\item[$(3)$] $\okbd_{\widetilde{Y}_{\bullet}}(f^*D)=\okbd_{\widetilde{Y}_{\bullet}}(f^*D')$ for every proper admissible flag $\widetilde{Y}_{\bullet}$ over $X$ centered at $x$ defined on a smooth projective variety $\widetilde{X}$ with a birational morphism $f \colon \widetilde{X} \to X$.
\item[$(4)$] $\okbd_{\widetilde{Y}_{\bullet}}(f^*D)=\okbd_{\widetilde{Y}_{\bullet}}(f^*D')$ for every infinitesimal flag $\widetilde{Y}_{\bullet}$ over $X$ centered at $x$ defined on a smooth projective variety $\widetilde{X}$ with a birational morphism $f \colon \widetilde{X} \to X$.
\end{enumerate}
\end{theoremalpha}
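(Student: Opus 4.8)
The plan is to prove the implications $(1) \Rightarrow (2) \Rightarrow (3) \Rightarrow (1)$ and $(2) \Rightarrow (4) \Rightarrow (1)$, so that all four conditions become equivalent. The implications $(2) \Rightarrow (3)$ and $(2) \Rightarrow (4)$ are trivial, since proper and infinitesimal flags centered at $x$ are particular cases of admissible flags centered at $x$. Thus the real content lies in the implication $(1) \Rightarrow (2)$ and in the two ``converse'' implications $(3) \Rightarrow (1)$ and $(4) \Rightarrow (1)$.

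For $(1) \Rightarrow (2)$: suppose $D \equiv_x D'$, and let $f \colon \widetilde X \to X$ be any birational morphism with an admissible flag $\widetilde Y_\bullet$ centered at $x$. I would first reduce the computation of $\okbd_{\widetilde Y_\bullet}(f^*D)$ to the positive part of a divisorial Zariski decomposition on $\widetilde X$. Write the refined decomposition $D = P + N_x + N_x^c$ at $x$. Pulling back, $f^*D = f^*P + f^*N_x + f^*N_x^c$, and one checks that the divisorial Zariski decomposition of $f^*D$ has positive part $f^*P$ (since $f^*P$ is movable, as movability is preserved by pullback, and $f^*(N_x+N_x^c)$ is $f$-exceptional-free negative) and negative part $f^*N_x + f^*N_x^c$. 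The key point is that the flag $\widetilde Y_\bullet$ is centered at $x$, so $\widetilde Y_n$ maps to $x$; hence for the ``near $x$'' part, only the components of $f^*N_x$ and the exceptional locus over $x$ meet $\widetilde Y_n$, while $f^*N_x^c$ is disjoint from a neighborhood of $\widetilde Y_n$ and therefore contributes nothing to the valuation $\nu_{\widetilde Y_\bullet}$. Combining the additivity of Okounkov bodies under addition of a divisor all of whose components contain $\widetilde Y_n$ (a translation, essentially \cite[Proposition 4.1]{lm-nobody} and its refinements) with the fact that $P \equiv P'$ and $N_x = N_x'$ forces $f^*P \equiv f^*P'$ and $f^*N_x = f^*N_x'$, we get $\okbd_{\widetilde Y_\bullet}(f^*D) = \okbd_{\widetilde Y_\bullet}(f^*D')$. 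Here I would invoke \cite[Theorem A]{Jow} (or its limiting version \cite[Theorem C]{CHPW-okbd I}) on $\widetilde X$ to convert the numerical equivalence $f^*P \equiv f^*P'$ into equality of Okounkov bodies of the positive parts.

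For the converses $(3) \Rightarrow (1)$ and $(4) \Rightarrow (1)$: the strategy is to show that one can recover both $P$ (up to $\equiv$) and $N_x$ (on the nose) from the Okounkov body data restricted to proper (resp. infinitesimal) flags centered at $x$. For the positive part, I would run flags $\widetilde Y_\bullet$ on $X$ itself (with $f = \mathrm{id}$) whose center $Y_n$ is $x$ and whose higher-codimensional members $Y_1, \dots, Y_{n-1}$ sweep out enough directions; by the surface-case idea of Ro\'e pushed through \cite[Theorem A]{Jow}, equality of all such Okounkov bodies pins down the numerical class near $x$ of the movable part, and combined with flags on blow-ups it pins down $P \equiv P'$ globally. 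For the negative part $N_x$, I would use the fact that the multiplicities $\operatorname{ord}_{\widetilde Y_1}(f^*D)$ — which are read off as the minimal first coordinate of the Okounkov body, i.e.\ the position where $\okbd_{\widetilde Y_\bullet}(f^*D)$ meets the first coordinate axis — detect the coefficients of $f^*N_x$ along prime divisors through $x$ and along exceptional divisors over $x$; letting $\widetilde X$ range over blow-ups and $\widetilde Y_1$ range over all such prime divisors (this is exactly what proper flags, resp. infinitesimal flags, allow us to access) recovers $N_x$ completely. Thus equality of all the relevant Okounkov bodies yields $P \equiv P'$ and $N_x = N_x'$, i.e.\ $D \equiv_x D'$. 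The main obstacle I anticipate is the bookkeeping in $(1) \Rightarrow (2)$: one must carefully verify that passing to an arbitrary birational model $f \colon \widetilde X \to X$ really does split the pulled-back divisorial Zariski decomposition in the expected way and that the part supported away from $\widetilde Y_n$ genuinely does not affect the Okounkov body — this requires knowing that $\nu_{\widetilde Y_\bullet}$ only sees a neighborhood of $\widetilde Y_n$, together with control of how divisorial Zariski decompositions behave under pullback (additivity of the negative part for exceptional and non-exceptional pieces), which is where I would expect to spend most of the technical effort.
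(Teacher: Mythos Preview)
Your overall architecture matches the paper's: prove $(1)\Rightarrow(2)$, then $(3)\Rightarrow(1)$ and $(4)\Rightarrow(1)$, with $(2)\Rightarrow(3),(4)$ trivial. However, there are two genuine gaps.

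\textbf{Gap in $(1)\Rightarrow(2)$.} You assert that the divisorial Zariski decomposition of $f^*D$ has positive part $f^*P$, arguing that movability is preserved by pullback. This is false in general: if $x\in\bm(P)$ and $f$ blows up $x$, the exceptional divisor can appear with positive coefficient in $N(f^*P)$. The paper itself, in the proof of Lemma~\ref{lem-Pprop}, writes $\pi^*P=\widetilde P+\widetilde N$ precisely to allow $\widetilde N\neq 0$. The paper avoids computing $P(f^*D)$ explicitly; instead it first proves (Proposition~\ref{prop_localnumequiv}) that $D\equiv_x D'$ is equivalent to the statement that for \emph{every} birational $f$ and every $x'\in f^{-1}(x)$, the refined divisorial Zariski decompositions of $f^*D$ and $f^*D'$ at $x'$ satisfy $\widetilde P\equiv\widetilde P'$ and $\widetilde N_{x'}=\widetilde N'_{x'}$. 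The route to this goes through the intermediate characterization ``$P\equiv P'$ and $\ord_V(||D||)=\ord_V(||D'||)$ for all $V\ni x$''. Once this is in hand, the rest of your argument for $(1)\Rightarrow(2)$ (translate by the negative part via Lemma~\ref{lem-divzd}, use numerical invariance of Okounkov bodies for the positive parts, and observe that $\widetilde N_{x'}^c$ contributes nothing) goes through verbatim.

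\textbf{Gap in $(4)\Rightarrow(1)$.} For an infinitesimal flag, $\widetilde Y_1$ is by definition $f$-exceptional with $f(\widetilde Y_1)=\{x\}$. Thus the minimal first coordinate of $\okbd_{\widetilde Y_\bullet}(f^*D)$, namely $\ord_{\widetilde Y_1}(||f^*D||)$, only records asymptotic orders along divisors \emph{contracted to $x$}; you cannot set $\widetilde Y_1$ equal to the strict transform of a component $E$ of $N_x$, and there is no direct way to read $\mult_E N_x$ from first coordinates alone. The paper's fix (Lemma~\ref{lem-Ninf}) is to use an induced infinitesimal flag with $\widetilde Y_2=\widetilde Y_1\cap f_*^{-1}E$ and to extract $\mult_E N_x$ from the infimum of the \emph{second} coordinate. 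A parallel shift occurs for recovering $P$: Lemma~\ref{lem-P.Cinf} shows that $P\cdot Y_{n-1}$ equals the length of $\okbd_{\widetilde Y_\bullet}(f^*P)\cap(x_1\text{-axis})$ for suitable induced infinitesimal flags, which is a genuinely different computation from the proper case.

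For $(3)\Rightarrow(1)$ your sketch is in the right direction but vague, and you should avoid invoking \cite[Theorem~A]{Jow} since the paper recovers it as a corollary. The paper's concrete mechanism is: for a general complete-intersection curve $Y_{n-1}\ni x$ with $Y_{n-1}\cap\bm(P)=\emptyset$ and $Y_{n-1}\not\subseteq\bp(P)$, one has $P\cdot Y_{n-1}=\vol_{X|Y_{n-1}}(P)=\vol_{\R^1}(\okbd_{Y_\bullet}(P)\cap x_n\text{-axis})$ via Theorem~\ref{slice}; since such curves span $N_1(X)_\Q$ (Lemma~\ref{curveclass}), this determines $[P]$. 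When $x\in\bm(P)$ one first blows up $x$ and works at a nearby point of the exceptional divisor.
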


It is worth noting that proper or infinitesimal admissible flags are sufficient to determine the local numerical properties of a given divisor even though there are admissible flags on higher birational models that are neither proper nor infinitesimal in higher dimensions.

A new ingredient of the proof of Theorem \ref{main1} is the systematic  usage of admissible flags defined on higher birational models of $X$ that are \emph{induced} from a given admissible flag $Y_\bullet$ on $X$ (see Sections \ref{okbd_sec} and \ref{proof_sec}).
Let $f\colon \widetilde{X}\to X$ be a birational morphism with $\widetilde{X}$ smooth projective. Under a suitable condition, there is an obvious proper admissible flag $Y'_\bullet$ on $\widetilde{X}$ over $X$ satisfying $f(Y'_i)=Y_i$. We call such $Y_\bullet'$ an \emph{induced proper admissible flag over $X$}.
We also consider the admissible flag $\widetilde{Y}_\bullet$ on $\widetilde{X}$ such that $\widetilde{Y}_1=E$ is an $f$--exceptional prime divisor and $\widetilde{Y}_{i}$ for $2 \leq i\leq n$ satisfy $\widetilde{Y}_i=E \cap Y_{i-1}'$. We call such $\widetilde{Y}_\bullet$ \emph{an induced infinitesimal admissible flag over $X$}. Under suitable conditions, induced infinitesimal admissible flags are guaranteed to exist on  higher birational models of $X$ (see Lemma \ref{lem-indinf}).
These induced admissible flags on higher birational models of $X$ also play important roles in the proof of Theorem \ref{main2}.
Another ingredient of the proof of Theorem \ref{main1} is Lemma \ref{lem-P.C2}, which is a generalization of Jow's result \cite[Corollary 3.3]{Jow}.

We remark that the implication $(1) \Rightarrow (2)$ of Theorem \ref{main1} was shown in \cite[Proposition 5]{Roe} under the assumption that $D, D'$ admit the Zariski decompositions. Blum--Merz \cite{BM} (and Blum--Malara--Merz--Szpond \cite{BMMS}) also independently obtained the equivalence $(1) \Leftrightarrow (3)$ of Theorem \ref{main1} using a different method.

\medskip

To extract the information of a pseudoeffective divisor $D$ on a smooth projective variety $X$ from the set of Okounkov bodies of $D$ with respect to admissible flags on $X$ centered at a point $x \in X$, we further decompose the divisor $N_x$ in the refined divisorial Zariski decomposition of $D$ at $x$ as
$N_x = N_x^{sm}+N_x^{sing}$
where every irreducible component of $N_x^{sm}$ (respectively, $N_x^{sing}$) is smooth (respectively, singular) at $x$. Then we have a decomposition of a pseudoeffective divisor $D$ as
\begin{equation}\tag{$\star$}\label{sharp}
D = P + N_x^{sm} +N_x^{sing}+ N_x^c.
\end{equation}
Using this further refinement of divisorial Zariski decompositions, we prove the higher dimensional generalization of \cite[Theorem 2]{Roe}.

\begin{theoremalpha}\label{main2}
Let $D, D'$ be big divisors on a smooth projective variety $X$.
For a fixed point $x\in X$, consider the decompositions as in \emph{(\ref{sharp})}
$$
D=P+N_x^{sm} + N_x^{sing} + N_x^c,\;\; D'=P'+{N'}_x^{sm} + {N'}_x^{sing} + {N'}_x^c.
$$
Then the following are equivalent:
\begin{enumerate}[wide, labelindent=3pt]
\item[$(1)$] $P \equiv P', N_x^{sm}={N'}_x^{sm}, \okbd_{Y_\bullet}(N_x^{sing}) = \okbd_{Y_\bullet}({N'}_x^{sing})$ for every admissible flag $Y_\bullet$ centered at $x$.
\item[$(2)$] $\okbd_{Y_\bullet}(D)=\okbd_{Y_\bullet}(D')$ for every admissible flag $Y_\bullet$ on $X$ centered at $x$.
\item[$(3)$] $\okbd_{\widetilde{Y}_\bullet}(f^*D)=\okbd_{\widetilde{Y}_\bullet}(f^*D')$ for every induced proper admissible flag $\widetilde{Y}_\bullet$ over $X$ centered at $x$ defined on a smooth projective variety $\widetilde{X}$ with a birational morphism $f \colon \widetilde{X} \to X$.
\item[$(4)$] $\okbd_{\widetilde{Y}_\bullet}(f^*D)=\okbd_{\widetilde{Y}_\bullet}(f^*D')$ for almost every induced infinitesimal admissible flag $\widetilde{Y}_\bullet$ over $X$ centered at $x$ defined on a smooth projective variety $\widetilde{X}$ with a birational morphism $f \colon \widetilde{X} \to X$.
\end{enumerate}
\end{theoremalpha}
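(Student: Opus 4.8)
The plan is to reduce everything to one basic identity and then read the pieces off it. For a big divisor $D$ with divisorial Zariski decomposition $D=P+N$ one has $|D|_\R=N+|P|_\R$ as sets of effective $\R$-divisors, because any effective $D''\sim_\R D$ satisfies $D''\ge N_\sigma(D'')=N$ (as $N_\sigma$ depends only on the numerical class), and since the flag valuation $\nu_{Y_\bullet}$ is additive on effective divisors this gives, for every admissible flag $Y_\bullet$,
\[
\okbd_{Y_\bullet}(D)=\nu_{Y_\bullet}(N)+\okbd_{Y_\bullet}(P).
\]
If $Y_\bullet$ is centered at $x$, then $\nu_{Y_\bullet}(N_x^c)=0$ since $N_x^c$ avoids $x$ while every member of the flag meets the center, so $\nu_{Y_\bullet}(N)=\nu_{Y_\bullet}(N_x^{sm})+\nu_{Y_\bullet}(N_x^{sing})$; moreover $N_x^{sing}$ is a sub-sum of the components of $N_\sigma(D)$, whence $N_\sigma(N_x^{sing})=N_x^{sing}$, $N_x^{sing}$ is rigid, and $\okbd_{Y_\bullet}(N_x^{sing})=\{\nu_{Y_\bullet}(N_x^{sing})\}$ is a single point. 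So $\okbd_{Y_\bullet}(D)$ is the translate of the full-dimensional body $\okbd_{Y_\bullet}(P)$ by the vector $\nu_{Y_\bullet}(N_x^{sm})+\nu_{Y_\bullet}(N_x^{sing})$, and the analogous identities hold verbatim on birational models. Setting up this identity, together with the rigidity of $N_x^{sing}$, is the backbone of the proof; it uses only standard facts about $N_\sigma$ and flag valuations.

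Granting it, $(1)\Rightarrow(2)$ is immediate: $P\equiv P'$ with $P,P'$ big forces $\okbd_{Y_\bullet}(P)=\okbd_{Y_\bullet}(P')$ for all $Y_\bullet$ by \cite[Theorem A]{Jow} (equivalently \cite[Theorem C]{CHPW-okbd I}), $N_x^{sm}={N'}_x^{sm}$ gives $\nu_{Y_\bullet}(N_x^{sm})=\nu_{Y_\bullet}({N'}_x^{sm})$, and $\okbd_{Y_\bullet}(N_x^{sing})=\okbd_{Y_\bullet}({N'}_x^{sing})$ is the third hypothesis, so the identity yields $\okbd_{Y_\bullet}(D)=\okbd_{Y_\bullet}(D')$. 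For $(2)\Leftrightarrow(3)$: an induced proper admissible flag $\widetilde Y_\bullet$ over $X$ has $\widetilde Y_i$ the strict transform of the member $Y_i$ of an admissible flag $Y_\bullet$ on $X$ centered at $x$, and a short induction on $i$ comparing the flag valuations on $\widetilde X$ and on $X$ gives $\okbd_{\widetilde Y_\bullet}(f^*D)=\okbd_{Y_\bullet}(D)$; thus $(2)\Rightarrow(3)$, while $(3)\Rightarrow(2)$ is the case $f=\mathrm{id}_X$.

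For $(3)\Rightarrow(1)$ (equivalently $(2)\Rightarrow(1)$) we reconstruct the three data from $\{\okbd_{Y_\bullet}(D)\}$. First, Lemma \ref{lem-P.C2} reads $P\cdot C$ off the Okounkov bodies of $D$ for flags centered at $x$ with $Y_{n-1}=C$ (resolving $C$ at $x$ on a birational model if necessary, as in $(3)$); since the classes of curves through $x$ already determine numerical equivalence, $\okbd_{Y_\bullet}(D)=\okbd_{Y_\bullet}(D')$ forces $P\equiv P'$. Second, for a prime divisor $\Gamma$ through $x$ that is smooth at $x$, the flag with $Y_1=\Gamma$ is admissible and the minimal first coordinate of $\okbd_{Y_\bullet}(D)$ equals $\sigma_\Gamma(D)$, the coefficient of $\Gamma$ in $N$, which for such $\Gamma$ is its coefficient in $N_x^{sm}$; ranging over all such $\Gamma$ gives $N_x^{sm}={N'}_x^{sm}$. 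Here smoothness of $\Gamma$ at $x$ is exactly what lets $\Gamma$ be the first member of an admissible flag centered at $x$---the components of $N_x^{sing}$, being singular at $x$, never can be---so only the smooth part is recoverable as a divisor at this level. Third, with $\okbd_{Y_\bullet}(P)=\okbd_{Y_\bullet}(P')$ and $\nu_{Y_\bullet}(N_x^{sm})=\nu_{Y_\bullet}({N'}_x^{sm})$ now known, the unique translation carrying $\okbd_{Y_\bullet}(P)$ to $\okbd_{Y_\bullet}(D)=\okbd_{Y_\bullet}(D')$ equals both $\nu_{Y_\bullet}(N_x^{sm}+N_x^{sing})$ and $\nu_{Y_\bullet}({N'}_x^{sm}+{N'}_x^{sing})$, and subtracting the known equal terms yields $\nu_{Y_\bullet}(N_x^{sing})=\nu_{Y_\bullet}({N'}_x^{sing})$, i.e. $\okbd_{Y_\bullet}(N_x^{sing})=\okbd_{Y_\bullet}({N'}_x^{sing})$, for every $Y_\bullet$ centered at $x$.

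Finally, $(1)\Leftrightarrow(4)$ is carried out on birational models by applying the same identity to $f^*D$. On a model $\widetilde X$ with an induced infinitesimal flag $\widetilde Y_\bullet$ (so $\widetilde Y_1=E$ is an exceptional prime over $x$ and $\widetilde Y_i\subseteq E$ for $i\ge2$) one computes that $[P_\sigma(f^*D)]$ depends only on $[P]$, that $\nu_{\widetilde Y_\bullet}(N_\sigma(f^*D))=(c,0,\dots,0)+\nu_{\widetilde Y_\bullet}(\widetilde{N_x^{sm}})+\nu_{\widetilde Y_\bullet}(\widetilde{N_x^{sing}})$ with $c=\mult_x(N_x)+\mult_x\|P\|$, and that the last term depends only on the germ of $N_x^{sing}$ at $x$; since an induced infinitesimal flag realizes only a \emph{linear} flag $E\supseteq\P(T_xY_1)\supseteq\cdots$ inside $E$, the data $\{\nu_{\widetilde Y_\bullet}(\widetilde{N_x^{sing}})\}$ over induced infinitesimal flags is, up to explicit invertible formulas, the same as the data $\{\okbd_{Y_\bullet}(N_x^{sing})\}$ over admissible flags on $X$ centered at $x$. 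This gives both implications, and explains why $(4)$ matches $(1)$ rather than the finer relation $\equiv_x$ of Theorem \ref{main1}, whose analogous condition ranges over \emph{all} infinitesimal flags, including non-linear ones inside $E$; the ``almost every'' ensures the chosen flag is in general position with respect to the finitely many components of $N_x^{sm}$, $N_x^{sing}$, the exceptional loci and the relevant base loci, so that these formulas are valid. I expect the main obstacles to be (i) this precise comparison between the flag valuations of $N_x^{sing}$ on $X$ and those of its strict transforms on infinitesimal models, and (ii) guaranteeing that enough induced infinitesimal flags exist (via Lemma \ref{lem-indinf}) and that the excluded ``bad'' ones are negligible.
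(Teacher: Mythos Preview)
Your overall architecture for $(1)\Leftrightarrow(2)\Leftrightarrow(3)$ matches the paper's: the translation identity $\okbd_{Y_\bullet}(D)=\nu_{Y_\bullet}(N)+\okbd_{Y_\bullet}(P)$ (the paper's Lemma~\ref{lem-divzd}), recovering $N_x^{sm}$ from the minimal first coordinate (Lemma~\ref{lem-Nprop}), and recovering $[P]$ from intersections with curves (Lemma~\ref{lem-P.C2}). Two points need correction, however.

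First, your claim that for an induced proper flag $\widetilde Y_\bullet$ one has $\okbd_{\widetilde Y_\bullet}(f^*D)=\okbd_{Y_\bullet}(D)$ is false in general: Lemma~\ref{lem-bir} shows the two bodies differ by a fixed unipotent linear map depending on the exceptional divisors $E_i$ (the equality you assert holds only when $f$ is an isomorphism near $Y_n$, cf.\ Example~\ref{ex-y-admissible}(1)). The equivalence $(2)\Leftrightarrow(3)$ survives because this linear map is the same for $D$ and $D'$, but your stated reason is wrong. Second, in recovering $[P]$ you apply Lemma~\ref{lem-P.C2}, which requires $Y_{n-1}\cap\bm(P)=\emptyset$; if $x\in\bm(P)$ no curve through $x$ satisfies this, and ``resolving $C$ at $x$'' does not help since the strict transform still meets $\bm(P(f^*D))$. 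The paper handles this (Lemma~\ref{lem-Pprop}) by blowing up $x$, choosing $\widetilde x\in E\setminus\bm(\widetilde P)$, and using that induced proper flags on $\Bl_xX$ are, via Lemma~\ref{lem-bir}, determined by flags on $X$---so the data on $X$ still suffices. (Incidentally, the implication $P\equiv P'\Rightarrow\okbd_{Y_\bullet}(P)=\okbd_{Y_\bullet}(P')$ you use is \cite[Proposition~4.1]{lm-nobody}, not Jow's theorem, which goes the other way; the paper deliberately avoids Jow so as to recover it as a corollary.)

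Your treatment of $(4)$ is too schematic to be a proof. The paper makes ``almost every'' precise: it means induced infinitesimal flags arising from $Y_\bullet$--admissible log resolutions of $(X,N_x^{sing}+{N'}_x^{sing})$. The key technical input is Lemma~\ref{lem-bir-indinf}, which shows that under this hypothesis the valuation with respect to the induced infinitesimal flag is the cyclic permutation $(x_1,\dots,x_n)\mapsto(x_n,x_1,\dots,x_{n-1})$ of the valuation with respect to the induced proper flag \emph{on the same model}. This is what lets one transport the arguments for $(3)$ to $(4)$ (together with the infinitesimal analogues Lemmas~\ref{lem-Ninf} and~\ref{lem-Pinf}). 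Your formula involving $c=\mult_x(N_x)+\mult_x\|P\|$ and ``explicit invertible formulas'' between $\{\nu_{\widetilde Y_\bullet}(\widetilde{N_x^{sing}})\}$ and $\{\okbd_{Y_\bullet}(N_x^{sing})\}$ is not justified and does not match the paper's mechanism; in particular, without the log-resolution hypothesis on $N_x^{sing}+{N'}_x^{sing}$, Example~\ref{ex-main2} shows the equality in $(4)$ can genuinely fail for some induced infinitesimal flags, so the restriction is essential and not merely a genericity convenience.
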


For the surface case, the statement of Theorem \ref{main2} is slightly different from \cite[Theorem 2]{Roe}. In \cite{Roe}, the notion of clusters of infinitely near points plays a crucial role in the proof.
Our notion of induced admissible flags in higher dimensions replaces the role of clusters in the surface case.

Note that $\okbd_{Y_\bullet}(N_x^{sing})$ in the statement (1) of Theorem \ref{main2} consists of a single valuative point and it reflects some singularity properties of $N_x^{sing}$.
Example \ref{ex-main2} shows that the condition $\okbd_{Y_\bullet}(N_x^{sing}) = \okbd_{Y_\bullet}({N'}_x^{sing})$ in the statement (1) of Theorem \ref{main2} is strictly weaker than the condition $N_x^{sing} = {N'}_x^{sing}$ in general.
We also see in Example \ref{ex-main2} that the condition (4) does not hold for arbitrary induced infinitesimal admissible flags. See Theorem \ref{thm:main2intext} for the precise statement of (4).

By Theorem \ref{main1}, Theorem \ref{main2}, and Remark \ref{rem_conseq}, we can see how naturally the local positivity properties (e.g, $x \in \bm(D), x \in \bp(D)$,  $\epsilon(||D||;x)$) are encoded in the Okounkov bodies with respect to admissible flags centered at a point $x$.

\medskip

As a consequence of Theorem \ref{main2}, we can recover the well known result of Jow \cite[Theorem A]{Jow} (see also \cite[Proposition 4.1]{lm-nobody}), which states that if $D, D'$ are big divisors on a smooth projective variety $X$, then
$$
D \equiv D'~\Longleftrightarrow~\okbd_{Y_\bullet}(D)=\okbd_{Y_\bullet}(D')~\text{ for all admissible flags $Y_\bullet$ on $X$.}
$$
Our proof of Theorem \ref{main2} does not use \cite[Theorem A]{Jow}, but we use some ideas in the paper \cite{Jow}.

\medskip

For a pseudoeffective divisor $D$, rather than following the original construction of Okounkov bodies, by taking the limiting procedure on the Okounkov bodies of big divisors near $D$, we can associate to $D$ the so-called \emph{limiting Okounkov body}. We refer to \cite{CHPW-okbd I} for more details. In Section \ref{oklim_sec}, we extend our main results above for big divisors to pseudoeffective divisors (see Theorem \ref{oklim-main1} and Theorem \ref{oklim-main2}). The proofs of Theorem \ref{main1} and Theorem \ref{main2} still work in the pseudoeffective case with little modification.

\medskip

The rest of the paper is organized as follows.
We start in Section \ref{locnum_sec} by defining the local numerical equivalence of pseudoeffective divisors.
In Section \ref{okbd_sec}, we recall the definition of the Okounkov body, and prove some technical results.
Section \ref{proof_sec} is devoted to the proofs of Theorem \ref{main1} and Theorem \ref{main2}.
Finally, the extension of the main results to the limiting Okounkov bodies of pseudoeffective divisors is given in Section \ref{oklim_sec}.

\medskip

Throughout the paper, we work over the field $\C$ of complex numbers. Every divisor is assumed to be an $\R$-Cartier $\R$-divisor.

\subsection*{Acknowledgement}
We are grateful to Joaquim Ro\'{e} for interesting discussions and valuable suggestions.
We would like to thank Harold Blum and Georg Merz for helpful discussions and for sharing their preprint \cite{BM} with us.
We sincerely appreciate referees for valuable suggestions and comments.

\section{Local numerical equivalence}\label{locnum_sec}

In this section, we introduce the local numerical equivalence of pseudoeffective divisors, and prove some basic results. We also recall some basic notions of asymptotic invariants of divisors.

Let $X$ be a smooth projective variety, and $D$ be a pseudoeffective divisor on $X$.
For an irreducible closed subvariety $V\subseteq X$, let $\ord_V(D)$ denote the order of an effective divisor $D$ along $V$.
If $D$ is a big divisor on $X$, then \emph{the asymptotic valuation} of $V$ at $D$ is defined as
$$
\ord_V(||D||):=\inf\{\ord_V(D')\mid D\equiv D'\geq 0\}.
$$
If $D$ is only pseudoeffective, then \emph{the asymptotic valuation} of $V$ at $D$ is defined as
$$
\ord_V(||D||):=\lim_{\epsilon\to 0+}\ord_V(||D+\eps A||)
$$
where $A$ is an ample divisor on $X$. One can check that this definition is independent of the choice of $A$. Note that $\ord_V(||D||)$ is a numerical invariant of $D$ and $\ord_V(|| \cdot ||) \colon \text{Big}(X) \to \R$ is a continuous function on the cone of big divisor classes.
In particular, if $D$ is big, then $\ord_V(||D||)=\lim_{\epsilon\to 0+}\ord_V(||D+\eps A||)$ for any ample divisor $A$ on $X$.

The \emph{divisorial Zariski decomposition} of a pseudoeffective divisor $D$ is the decomposition
$$
D=P+N = P(D) + N(D)
$$
into the \emph{negative part} $N$ defined as
$
N=N(D):=\sum_{E}  \ord_E(||D||)E
$
where the summation is over all the finitely many prime divisors $E$ of $X$ such that $ \ord_E(||D||)>0$ and the \emph{positive part} $P$ defined as
$
P=P(D):=D-N(D).
$
The positive part $P$ can be characterized as the maximal movable divisor such that $P \leq D$ (see \cite[Proposition III.1.14]{nakayama}). By construction, the negative part $N$ is a numerical invariant of $D$. For more details, we refer to \cite{B1}, \cite[Chapter III]{nakayama}.

Following Ro\'{e} \cite{Roe}, for a given point $x\in X$, we further decompose the negative part $N$ as
$$
N=N_x + N_x^c
$$
into the effective divisors $N_x$ and $N_x^c$ such that every irreducible component of $N_x$ passes through $x$ and $x \not\in \Supp(N_x^c)$. We say that
$$
D=P+N_x+N_x^c
$$
is the \emph{refined divisorial Zariski decomposition of $D$ at a point $x$}.

The following is the main result of this section.

\begin{proposition}\label{prop_localnumequiv}
Let $D$ and $D'$ be pseudoeffective divisors on a smooth projective variety $X$ with the refined divisorial Zariski decompositions
$$
D=P+N_x + N_x^c ~~\text{ and }~~ D'=P'+N_x' + N_x'^c
$$
at a point $x \in X$.
Then the following are equivalent:
\begin{enumerate}[wide, labelindent=3pt]
\item[$(1)$] $P \equiv P'$ and $N_x = N_x'$.
\item[$(2)$] $P \equiv P'$ and $\ord_V(||D||)=\ord_V(||D'||)$ for every irreducible subvariety $V \subseteq X$ containing  $x$.
\item[$(3)$] For any birational morphism $f \colon \widetilde{X} \to X$ with $\widetilde{X}$ smooth projective and any point $x'\in f^{-1}(x)$, if we write the refined divisorial Zariski decompositions
$$
f^*D = \widetilde{P} + \widetilde{N}_{x'} + \widetilde{N}_{x'}^c ~~\text{ and }~~ f^*D'=\widetilde{P}' + \widetilde{N}'_{x'} + \widetilde{N}_{x'}'^c
$$
at a point $x'$, then $\widetilde{P} \equiv \widetilde{P}'$ and $\widetilde{N}_{x'} = \widetilde{N}_{x'}'$.
\end{enumerate}
\end{proposition}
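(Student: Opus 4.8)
The plan is to prove the cycle $(1) \Rightarrow (3) \Rightarrow (2) \Rightarrow (1)$, since $(3)$ is the strongest-looking statement (it quantifies over all birational models) and it is cleanest to extract the others from it once it is in hand.

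\emph{Strategy for $(1) \Rightarrow (3)$.} Fix $f \colon \widetilde{X} \to X$ and $x' \in f^{-1}(x)$. The first step is to understand the divisorial Zariski decomposition under pullback. Since $\ord_E(\|f^*D\|)$ for $E$ a prime divisor on $\widetilde{X}$ is a numerical invariant depending only on $f^*D$, and since $\ord_V(\|-\|)$ is functorial along $f$ for $f$-exceptional valuations as well as for strict transforms, one gets $N(f^*D) = f^* N(D) + N_f$, where $N_f$ is an effective $f$-exceptional divisor determined entirely by $P = P(D)$ (because $\ord_E(\|f^*D\|) = \ord_E(\|f^*P\|)$ for $f$-exceptional $E$, as $f^*N$ already accounts for the non-exceptional part and the movable divisor $f^*P$ contributes only through its own negative part upstairs). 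Consequently $\widetilde{P} = P(f^*D) = f^*P - N_f$, which is determined up to numerical equivalence by $P$ alone; hence $P \equiv P'$ forces $\widetilde{P} \equiv \widetilde{P}'$. For the negative parts near $x'$, decompose $N(f^*D) = f^*N_x + f^*N_x^c + N_f$ and observe: every component of $f^*N_x$ that meets $x'$ is a component of $\widetilde{N}_{x'}$; the components of $f^*N_x^c$ that meet $x'$ are $f$-exceptional (since $x \notin \Supp(N_x^c)$, any such component maps into $f^{-1}(x)$, but $f^*$ of a divisor not through $x$ has no non-exceptional component through $x'$); and $N_f$ is $f$-exceptional and, as noted, depends only on $P$. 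So $\widetilde{N}_{x'}$ is assembled from $f^*N_x$, from the part of $f^*N_x^c$ over $x$, and from $N_f$ — each piece a function of data that $(1)$ equates for $D$ and $D'$. Here the one genuinely delicate point, and the step I expect to be the main obstacle, is verifying that the ``$f$-exceptional part of $f^*N_x^c$ lying over $x$'' is literally the same divisor for $D$ and $D'$ when $N_x^c$ and ${N_x'}^c$ may differ: this is true because that part is $f^*N_x^c - (\text{strict transform of } N_x^c)$ restricted to components through $x'$, and $f^* N_x^c$ has all of its components-through-$x'$ exceptional, so this contribution equals $f^*N_x^c$ minus its strict transform — but since $N_x^c$ and ${N_x'}^c$ are genuinely allowed to differ, one must instead argue that \emph{no} component of $f^*N_x^c$ passes through a generic... actually the honest fix is to choose $x'$ generic, or rather to note $x' \in f^{-1}(x)$ is arbitrary and a component of $f^* N_x^c$ through $x'$ forces that component to be $f$-exceptional and to lie in the fiber; one then checks directly from $P \equiv P'$ and $N_x = N_x'$ that these exceptional contributions over $x$ agree. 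I would isolate this as a lemma on pullbacks of divisorial Zariski decompositions, likely citing \cite[Chapter III]{nakayama} for the compatibility of $\sigma_E$ with birational pullback.

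\emph{Strategy for $(3) \Rightarrow (2)$.} Assume $(3)$. Taking $f = \mathrm{id}$ already gives $P \equiv P'$ and $N_x = N_x'$, so what remains is $\ord_V(\|D\|) = \ord_V(\|D'\|)$ for every irreducible $V \ni x$. When $V$ is a divisor through $x$ this is immediate from $N_x = N_x'$ together with $P \equiv P'$ (as $\ord_V(\|D\|) = \mathrm{ord}_V(N_x)$ for such $V$, the positive part being movable). For higher-codimension $V$, pass to a birational model $f \colon \widetilde{X} \to X$ on which the strict transform of a general point-blow-up tower over $V$ produces a prime divisor $E$ with $\ord_E(\|f^*D\|) = \ord_V(\|D\|)$ (the standard ``$\sigma$ is computed by divisors'' principle, e.g. via \cite[III.2.8]{nakayama}); choose $x' \in E \cap f^{-1}(x)$, which is nonempty since $V \ni x$. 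Then $\ord_E$ of $\widetilde{N}_{x'}$ reads off $\ord_V(\|D\|)$, and $(3)$ says $\widetilde{N}_{x'} = \widetilde{N}_{x'}'$, giving $\ord_V(\|D\|) = \ord_V(\|D'\|)$.

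\emph{Strategy for $(2) \Rightarrow (1)$.} This is the easy direction: applying $(2)$ to $V$ a prime divisor through $x$ gives $\ord_V(N_x) = \ord_V(N_x')$ for every such $V$, i.e. $N_x = N_x'$, and $P \equiv P'$ is part of the hypothesis. This closes the cycle. Throughout, I would state at the outset the basic facts I am using — numerical invariance of $N(D)$ and of $\ord_V(\|D\|)$, the maximal-movable characterization of $P(D)$ from \cite[Proposition III.1.14]{nakayama}, and behavior under birational pullback — so that the three implications reduce to bookkeeping with these inputs.
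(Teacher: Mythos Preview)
Your cycle $(1)\Rightarrow(3)\Rightarrow(2)\Rightarrow(1)$ is a valid route, and your arguments for $(3)\Rightarrow(2)$ and $(2)\Rightarrow(1)$ are correct and essentially what the paper has in mind when it declares those implications ``clear.'' The problem lies in your $(1)\Rightarrow(3)$ step, where you manufacture a difficulty that does not exist. You worry about ``the $f$-exceptional part of $f^*N_x^c$ lying over $x$'' and whether it agrees for $D$ and $D'$; you even flag this as ``the main obstacle.'' But there is no such part: since $x\notin\Supp(N_x^c)$, we have $x'\notin f^{-1}(\Supp N_x^c)=\Supp(f^*N_x^c)$, so \emph{no} component of $f^*N_x^c$, exceptional or otherwise, passes through $x'$. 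Once you observe this, your decomposition $N(f^*D)=f^*N_x+f^*N_x^c+N_f$ (with $N_f=N(f^*P)$ depending only on the numerical class of $P$) immediately gives $\widetilde{N}_{x'}=(\text{part of }f^*N_x\text{ through }x')+(\text{part of }N_f\text{ through }x')$, which visibly depends only on $N_x$ and the numerical class of $P$; together with $\widetilde{P}=P(f^*P)$ this yields $(3)$. So your strategy is salvageable, but the passage where you try to ``fix'' the $N_x^c$ contribution by taking $x'$ generic, or by arguing that exceptional contributions over $x$ somehow agree, is both unnecessary and incorrect as written.

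By comparison, the paper takes a shorter path: it declares $(1)\Leftarrow(2)\Leftrightarrow(3)$ clear and proves only $(1)\Rightarrow(2)$. That proof is a three-line computation hinging on the same observation you missed, phrased downstairs: for $V\ni x$ one has $\ord_V(N_x^c)=0$, hence $\ord_V(\|D\|)=\ord_V(\|D-N_x^c\|)$, and since $D-N_x^c=P+N_x\equiv P'+N_x'=D'-{N_x'}^c$ the asymptotic orders agree. Your approach unpacks more of the birational machinery (the identity $N(f^*D)=f^*N(D)+N(f^*P)$, behaviour of $\sigma_E$ under pullback), which is instructive but not needed; the paper's route isolates the single relevant fact and avoids the bookkeeping on $\widetilde{X}$ altogether.
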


\begin{proof}
The implications $(1)\Leftarrow (2) \Leftrightarrow (3)$ are clear. Thus we only have to check the implication $(1)\Rightarrow (2)$.
Suppose that $(1)$ holds, i.e., $P \equiv P'$ and $N_x = N_x'$. It is equivalent to that $D-N_x^c \equiv D' - {N_x'}^c$. Let $V \subseteq X$ be an irreducible subvariety passing through $x$.
Since $x \not\in \Supp(N_x^c) \cup \Supp({N_x'}^c)$, it follows that $\ord_{V}(N_x^c)=\ord_{V}({N_x'}^c)=0$. We then observe that
$$
\begin{array}{l}
\ord_V(||D||)= \ord_V(||D-N_x^c||) + \ord_V(||N_x^c||)=\ord_V(||D-N_x^c||)\\
\ord_V(||D'||)=\ord_V(||D'-{N_x'}^c||) + \ord_V(||{N_x'}^c||)=\ord_V(||D'-{N_x'}^c||),
\end{array}
$$
which implies that $\ord_V(||D||) = \ord_V(||D'||)$.
\end{proof}

\begin{remark}
The condition $\ord_V(||D||)=\ord_V(||D'||)$ for every irreducible subvariety $V \subseteq X$ passing through $x$ in condition (2) is clearly stronger than the condition $N_x=N'_x$ in (1). However, the condition $P\equiv P'$ takes care of this difference.
\end{remark}

The following is a natural generalization of \cite[Definition 4]{Roe}.

\begin{definition}\label{def-locnum}
Under the notations as in Proposition \ref{prop_localnumequiv}, we say two pseudoeffective divisors \emph{$D, D'$ are numerically equivalent near a point $x$} and write $D \equiv_x D'$
if any one of the equivalent conditions of Proposition \ref{prop_localnumequiv} holds.
\end{definition}

It is clear that $D\equiv D'$ implies $D\equiv_x D'$ for any fixed point $x \in X$, but the converse does not hold in general. Notice that $D\equiv D'$ if and only if $D\equiv_x D'$ for all points $x\in X$.

\begin{remark}[Birational invariance]
Note that the numerical equivalence relations are preserved under pull-backs. The same holds for the local numerical equivalence by Proposition \ref{prop_localnumequiv}.
\end{remark}

We now recall the asymptotic base loci of a divisor $D$ on a smooth projective variety $X$.
The \emph{stable base locus} of $D$ is defined as
$$
\SB(D):= \bigcap_{D \sim_{\R} D' \geq 0} \Supp(D').
$$
where $D\sim_\R D'$ denotes the $\R$-linear equivalence, that is, $D-D'$ is the $\R$-linear combination of principal divisors.
The \emph{augmented base locus} of $D$ is defined as
$$
\bp(D):=\bigcap_{A\text{:ample}}\text{SB}(D-A).
$$
The \emph{restricted base locus} of $D$ is defined as
$$
\bm(D):=\bigcup_{A\text{:ample}}\SB(D+A).
$$
It is well known that  $\bp(D)$ and $\bm(D)$ depend only on the numerical class of $D$. Note that $\bm(D)=X$  (respectively, $\bp(D)=X$) if and only if $D$ is not pseudoeffective (respectively, not big), and $\bm(D)=\emptyset$ (respectively, $\bp(D)=\emptyset$) if and only if $D$ is nef (respectively, ample). Since we have $\bm(D)=\bigcup_{\ord_V(||D||)>0} V$ by \cite[Theorem B]{elmnp-asymptotic inv of base}, the union of codimension one components of $\bm(D)$ coincides with  $\Supp(N)$ where $D=P+N$ is the divisorial Zariski decomposition.
For more details on $\bp,\bm$, we refer to \cite{elmnp-asymptotic inv of base}.

We also recall the (restricted) volume of a divisor.  Consider a subvariety $V \subseteq X$ of dimension $v$. The \emph{restricted volume} of $D$ along $V$ is defined as
$$
\vol_{X|V}(D):=\limsup_{m \to \infty} \frac{h^0(X|V,\lfloor mD \rfloor)}{m^v/v!}
$$
where $h^0(X|V, \lfloor mD \rfloor)$ is the dimension of the image of the natural restriction map
$$
\varphi \colon H^0(X, \mc O_X(\lfloor mD \rfloor ))\to H^0(V,\mc O_V(\lfloor mD \rfloor|_V)).
$$
If $V \not\subseteq \bp(D)$, then the restricted volume $\vol_{X|V}(D)$ depends only on the numerical class of $D$, and it uniquely extends to a continuous function
$
\vol_{X|V} \colon \text{Big}^V (X) \to \R
$
where $\text{Big}^V(X)$ is the set of all $\R$-divisor classes $\xi$ such that $V$ is not properly contained in any irreducible component of $\bp(\xi)$. When $V=X$, we simply let $\vol_X(D):=\vol_{X|X}(D)$, and we call it the \emph{volume} of $D$.
For more details, we refer to \cite{elmnp-restricted vol and base loci} and \cite[Section 2]{CHPW-okbd I}

Although the following seems to be well known to experts, we include it here for the completeness in the literature.

\begin{proposition}\label{bp(P)=bp(D)}
Let $X$ be a smooth projective variety, and $D$ be a pseudoeffective divisor on $X$ with the divisorial Zariski decomposition $D=P+N$. Then $\bp(P)=\bp(D)$.
\end{proposition}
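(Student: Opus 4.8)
The plan is to prove the two inclusions $\bp(P) \subseteq \bp(D)$ and $\bp(D) \subseteq \bp(P)$ separately, exploiting that $\bp$ is a numerical invariant and that $D = P + N$ with $N$ effective.

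First I would handle $\bp(D) \subseteq \bp(P)$. Since $N \geq 0$, for any ample divisor $A$ we have $D - A \sim_{\R} (P - A) + N$ with $N$ effective, hence $\SB(D - A) \subseteq \SB(P - A) \cup \Supp(N)$. Intersecting over all ample $A$ gives $\bp(D) \subseteq \bp(P) \cup \Supp(N)$. It then remains to show that no irreducible component $E$ of $\Supp(N)$ lies in $\bp(D)$ unless it is already forced into $\bp(P)$; more precisely I would argue that the codimension-one part of $\bp(D)$ is contained in $\bp(P)$. The cleanest route: recall $\bm(D) = \bigcup_{\ord_V(\|D\|) > 0} V$, so $\Supp(N)$ is exactly the union of the divisorial components of $\bm(D)$; since $P$ is movable, $N(P) = 0$, so $\Supp(N) \not\subseteq \bm(P)$ componentwise, and because $\bp(P) \supseteq \bm(P)$ one needs a slightly finer statement. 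Instead, I would use the characterization via restricted volumes / the description of $\bp$ in \cite{elmnp-asymptotic inv of base}: a prime divisor $E$ satisfies $E \subseteq \bp(D)$ iff $\vol_{X|E}(D) = 0$, and similarly for $P$; and for a prime divisor $E$ appearing in $N$ one checks $\vol_{X|E}(D) = \vol_{X|E}(P)$ is not automatic — so I would rather avoid this and use the cleaner inclusion argument below.

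Here is the argument I actually expect to use. For $\bp(P) \subseteq \bp(D)$: write $D = P + N$ with $N \geq 0$; for ample $A$, $P - A \sim_{\R} (D - A) - N$, so it is not obviously effective-comparable in the needed direction — so this inclusion is the subtle one. The standard fact (see \cite{elmnp-asymptotic inv of base}) is that adding an effective divisor $N$ can only \emph{enlarge} the stable and augmented base loci \emph{within} $\Supp(N)$: precisely, $\bp(D) \setminus \Supp(N) = \bp(P) \setminus \Supp(N)$, since away from $\Supp(N)$ the linear systems of $mD$ and $mP$ differ only by the fixed part $mN$. This already gives that $\bp(P)$ and $\bp(D)$ agree outside $\Supp(N)$. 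To finish, I must show that $\Supp(N) \subseteq \bp(D)$ and $\Supp(N) \subseteq \bp(P)$, so that the loci also agree on $\Supp(N)$. That $\Supp(N) \subseteq \bm(D) \subseteq \bp(D)$ is immediate from the cited description of $\bm$. That $\Supp(N) \subseteq \bp(P)$ follows because each component $E$ of $N$ is $f$-exceptional-type negative for $P$: since $P$ is the maximal movable divisor $\leq D$ (\cite[Proposition III.1.14]{nakayama}) and $N \neq 0$ on $E$, restricting $P$ to $E$ has $\vol_{X|E}(P) = 0$ — equivalently $E$ is contained in every divisor numerically equivalent to $P - \epsilon A$ for small $\epsilon$ — hence $E \subseteq \SB(P - \epsilon A)$ for all small $\epsilon > 0$ and thus $E \subseteq \bp(P)$.

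Putting these together: $\bp(D)$ and $\bp(P)$ coincide outside $\Supp(N)$, and both contain $\Supp(N)$ entirely; therefore $\bp(D) = \bp(P)$. The main obstacle I anticipate is making rigorous the claim $\Supp(N) \subseteq \bp(P)$ — i.e.\ that the positive part $P$ of a divisor with nonzero negative part still has every $N$-component in its augmented base locus. I would prove this by a numerical/limiting argument: for $E$ a component of $N$ with $\ord_E(\|D\|) = \nu > 0$, one has $\ord_E(\|P + \epsilon A\|) > 0$ for all sufficiently small $\epsilon > 0$ (since $P = D - N$ and $\ord_E(\|D - N + \epsilon A\|)$ cannot vanish for small $\epsilon$ as $D - N$ is on the boundary of the movable cone along the $E$-direction), which by \cite[Theorem B]{elmnp-asymptotic inv of base} places $E$ in $\bm(P + \epsilon A) \subseteq \SB(P + \epsilon A - \text{(smaller ample)})$, hence $E \subseteq \bp(P)$. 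All other steps are routine manipulations of stable base loci and the definitions of $\bp, \bm$.
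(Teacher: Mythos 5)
Your overall strategy---show that $\bp(D)$ and $\bp(P)$ agree away from $\Supp(N)$, and that both loci contain $\Supp(N)$---is sound and in fact mirrors the structure of the paper's proof (its Case (1) handles points off $\Supp(N)$ via restricted volumes, its Case (2) handles points of $\Supp(N)$). You also correctly identify the crux: proving $\Supp(N)\subseteq\bp(P)$. Unfortunately, the argument you give for that crux is wrong. You claim that for a component $E$ of $N$ one has $\ord_E(\|P+\epsilon A\|)>0$ for small $\epsilon>0$. But $P$ is movable, so $N(P)=0$, i.e.\ $\ord_E(\|P\|)=0$ for \emph{every} prime divisor $E$; and adding an ample class only decreases asymptotic valuations, so $\ord_E(\|P+\epsilon A\|)=0$ for all $\epsilon>0$. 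Consequently $E\not\subseteq\bm(P+\epsilon A)$, and your route to $E\subseteq\bp(P)$ collapses. Your fallback assertion that $\vol_{X|E}(P)=0$ ``because $P$ is the maximal movable divisor $\leq D$'' is not a proof either---it is essentially a restatement of the desired conclusion $E\subseteq\bp(P)$ (via \cite[Theorem C]{elmnp-restricted vol and base loci}), and no mechanism is offered for why movability of $P$ plus $E\leq N$ forces the restricted volume to vanish.

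The point your argument misses is that one must perturb $P$ in the direction of $N$, not in an ample direction. The paper's device: if $x\in\Supp(N)$ but $x\notin\bp(P)$, then the moving Seshadri constant $\epsilon(\|P\|;x)$ is positive, and by continuity of $\epsilon(\|\cdot\|;x)$ on $N^1(X)_{\R}$ (\cite[Theorem 6.2]{elmnp-restricted vol and base loci}) so is $\epsilon(\|P+\epsilon N\|;x)$ for small $\epsilon>0$. On the other hand $P+\epsilon N$ is its own divisorial Zariski decomposition (\cite[Lemma III.1.8]{nakayama}), so $x\in\Supp(\epsilon N)\subseteq\bm(P+\epsilon N)\subseteq\bp(P+\epsilon N)$, forcing $\epsilon(\|P+\epsilon N\|;x)=0$---a contradiction. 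Some such transfer of information from $D$ (or from $P+\epsilon N$) back to $P$ is unavoidable, because in higher dimensions there is no orthogonality relation $P\cdot E=0$ to invoke directly. Your remaining steps (the inclusion $\bp(D)\subseteq\bp(P)\cup\Supp(N)$, the equality of the two loci off $\Supp(N)$ via the identification of the graded linear series of $mD$ and $mP$ away from $N$, and $\Supp(N)\subseteq\bm(D)\subseteq\bp(D)$) are correct in substance, though the second of these is stated somewhat loosely and is most cleanly justified, as in the paper, by comparing restricted volumes along irreducible components of the base loci.
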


\begin{proof}
If $D$ is a non-big pseudoeffective divisor, then so is $P$. Thus $\bp(D)=\bp(P)=X$.
Assuming now that $D$ is big, we first show $\bp(P) \subseteq \bp(D)$. Let $V$ be an irreducible component of $\bp(P)$. By \cite[Theorem C]{elmnp-restricted vol and base loci}, $\vol_{X|V}(P)=0$, and hence, $\vol_{X|V}(D)=0$. By applying \cite[Theorem C]{elmnp-restricted vol and base loci} again, we see that $V \subseteq \bp(D)$. Thus $\bp(P) \subseteq \bp(D)$.

To derive a contradiction, we assume that the inclusion is strict $\bp(P)\subsetneq\bp(D)$. There exists a point $x \in \bp(D) \setminus \bp(P)$. We divide into two cases: (1) $x \not\in \text{Supp}(N)$ and (2) $x \in \text{Supp}(N)$. Suppose that we are in Case (1).
There exists an irreducible component $W$ of $\bp(D)$ containing $x$. By \cite[Theorem C]{elmnp-restricted vol and base loci}, $\vol_{X|W}(D)=0$. Since $W \not\subseteq \text{Supp}(N)$, it follows that $\vol_{X|W}(P)=0$. However, $W \not\subseteq \bp(P)$, so we get a contradiction to \cite[Theorem C]{elmnp-restricted vol and base loci}.
Suppose now that we are in Case (2). Recall that the moving Seshadri constant $\epsilon(||P||;x)$ is positive because $x \not\in \bp(P)$ (\cite[p.644]{elmnp-restricted vol and base loci}). By \cite[Theorem 6.2]{elmnp-restricted vol and base loci},
$\epsilon(|| \cdot ||;x) \colon N^1(X)_{\R} \to \R_{\geq 0}$ is a continuous function. Thus $\epsilon(||P+\epsilon N||;x)>0$ for any sufficiently small $\epsilon >0$. On the other hand, since $P + \epsilon N$ is the divisorial Zariski decomposition by \cite[Lemma III.1.8]{nakayama}, we obtain $x \in \bm(P+\epsilon N) \subseteq \bp(P+\epsilon N)$. Thus $\epsilon(||P+\epsilon N||;x)=0$, which is a contradiction. We complete the proof.
\end{proof}

\begin{remark}\label{rem_conseq}
Recall that if $D \equiv_x D'$, then $V\subseteq\bm(D)$ if and only if $V\subseteq\bm(D')$ for any irreducible subvariety $V$ containing $x$.
Furthermore, the local numerical equivalence class of a pseudoeffective divisor $D$ at a point $x$ determines other various positivity invariants of $D$ such as the moving Seshadri constant $\epsilon(||D||;x)$, the Nakayama constant $\mu(D;x)$, the augmented restricted volume $\vol_{X|V}^+(D)$, and the augmented base locus $\bp(D)$.
\end{remark}

\section{Okounkov bodies and admissible flags}\label{okbd_sec}

In this section, we first review the definition and basic properties of Okounkov bodies. We then study various admissible flags introduced in Definition \ref{def of flags} and related issues, and prove some technical lemmas that are used in the proofs of Theorem \ref{main1} and Theorem \ref{main2}.

Let $X$ be a projective variety of dimension $n$. Recall that an \emph{admissible flag} $Y_\bullet$ on $X$ is a sequence of irreducible subvarieties
$$
Y_\bullet: X=Y_0\supseteq Y_1\supseteq\cdots \supseteq Y_{n-1}\supseteq Y_n=\{x\}
$$
where each $Y_i$ has codimension $i$ in $X$ and is smooth at the point $x$. Let $D$ be a divisor on $X$ with $|D|_{\R}:=\{ D' \mid D \sim_{\R} D' \geq 0 \}\neq\emptyset$.
In the following, we define a valuation-like function
$$
\nu_{Y_\bullet}:|D|_{\R}\to \R_{\geq0}^n.
$$
For any $D'\in |D|_\R$, we let
$
\nu_1=\nu_1(D'):=\ord_{Y_1}(D').
$
Then $D'-\nu_1 Y_1$ is effective and does not contain $Y_1$ in the support, so we can define
$
\nu_2=\nu_2(D'):=\ord_{Y_2}((D'-\nu_1Y_1)|_{Y_1}).
$
Similarly, for $2 \leq i \leq n-1$, we inductively define
$
\nu_{i+1}=\nu_{i+1}(D'):=\ord_{Y_{i+1}}((\cdots((D'-\nu_1Y_1)|_{Y_1}-\nu_2Y_2)|_{Y_2}-\cdots-\nu_iY_i)|_{Y_{i}}).
$
By collecting $\nu_i$'s, we finally obtain
$$
\nu_{Y_\bullet}(D'):=(\nu_1(D'),\nu_2(D'),\cdots,\nu_n(D')) \in \R^n_{\geq 0}.
$$

\begin{definition}\label{def-okbd}
Let $X$ be a projective variety of dimension $n$, and $D$ be a divisor on $X$ such that $|D|_{\R} \neq \emptyset$. The \emph{Okounkov body} $\okbd_{Y_\bullet}(D)$ of $D$ with respect to an admissible flag $Y_\bullet$ on $X$ is a convex subset of $\R^n$ defined as
$$
 \okbd_{Y_\bullet}(D):=\text{the closure of the convex hull of $\nu_{Y_\bullet}(|D|_{\R})$ in $\R^n_{\geq 0}$}.
$$
A point in $\nu_{Y_\bullet}(|D|_{\R})$ is called a \emph{valuative point}.
If $|D|_{\R} = \emptyset$, then we simply let $\okbd_{Y_\bullet}(D) := \emptyset$.
\end{definition}

By \cite[Proposition 4.1]{lm-nobody}, the Okounkov bodies are numerical in nature, i.e., if $D, D'$ are numerically equivalent big divisors, then $\okbd_{Y_\bullet}(D)=\okbd_{Y_\bullet}(D')$ for every admissible flag $Y_\bullet$. 

Note that this definition is equivalent to the one given in~\cite{lm-nobody}, \cite{KK} where the above function $\nu_{Y_\bullet}$ is defined and applied to the nonzero sections $s$ of each $H^0(X,\mc O_X(\lfloor mD \rfloor ))$ of the graded section ring $\bigoplus_{m \geq 0} H^0(X,\mc O_X(\lfloor mD \rfloor))$ and the Okounkov body $\okbd_{Y_\bullet}(D)$ is defined as the convex closure of the set of rescaled images $\frac{1}{m}\nu_{Y_\bullet}(s)$ in $\R^n$.
This equivalent construction can be generalized to a graded linear (sub)series $W_\bullet$ given by a divisor on $X$ to construct the Okounkov body $\okbd_{Y_\bullet}(W_\bullet)$ associated to $W_\bullet$ with respect to an admissible flag $Y_\bullet$.
Now, let $W_\bullet=W_\bullet(D|Y_{n-k})$ be a graded linear series given by a divisor $D$ on $X$ restricted to $Y_{n-k}$ where
$$W_m:=W_m(D|Y_{n-k})= \im\big[H^0\big(X, \mathcal{O}_X(\lfloor mD \rfloor)\big) \to H^0\big(Y_{n-k}, \mathcal{O}_{Y_{n-k}}(\lfloor mD \rfloor|_{Y_{n-k}})\big)\big]
$$
for each $m>0$.
We may regard the partial admissible flag
$$
Y_{n-k \bullet} : Y_{n-k} \supseteq Y_{n-k-1} \supseteq \cdots \supseteq Y_{n-1}\supseteq Y_n=\{x\}
$$
as an admissible flag on $Y_{n-k}$ that is a $k$-dimensional projective variety.
We define the Okounkov body of $D$ with respect to $Y_{n-k \bullet}$ as
$$
\okbd_{Y_{n-k \bullet}}(D):=\okbd_{Y_{n-k \bullet}}(W_\bullet) \subseteq \R^{k}_{\geq 0}\cong \{ 0 \}^{n-k} \times \R^{k}_{\geq 0} \subseteq \R^n_{\geq 0}.
$$
We often regard it as a subset of $\R^n_{\geq 0}$;
By \cite[(2.7) in p.804]{lm-nobody}, if $D$ is a big divisor, then we have $$\vol_{\R^{k}}(\okbd_{Y_{n-k \bullet}}(D)) = \vol_{X|Y_{n-k}}(D).$$
For more details, we refer to \cite{lm-nobody},\cite{KK},\cite{CHPW-okbd I}.

The following theorem is useful to prove Theorem \ref{main1} and Theorem \ref{main2}.

\begin{theorem}[{\cite[Theorem 1.1]{CPW-okbd II}}]\label{slice}
Let $X$ be a smooth projective variety of dimension $n$, and $D$ be a big divisor on $X$. Fix an admissible flag $Y_\bullet$ on $X$ such that $Y_{n-k} \not\subseteq \bp(D)$. Then we have
$$
\okbd_{Y_{n-k \bullet}}(D) =  \okbd_{Y_\bullet}(D)_{x_1=\cdots =x_{n-k}=0} := \okbd_{Y_\bullet}(D) \cap ( \{ 0 \}^{n-k} \times \R^k_{\geq 0} ).
$$
\end{theorem}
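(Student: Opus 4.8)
\emph{Strategy.} The plan is to reduce the statement to the single--coordinate case and iterate it along the flag. The engine is the one--slice identity $\okbd_{Y_\bullet}(D)_{x_1=0}=\okbd_{Y_{1\bullet}}(W_\bullet(D|Y_1))$, valid whenever $D$ is big and $Y_1\not\subseteq\bp(D)$, where $Y_{1\bullet}\colon Y_1\supseteq Y_2\supseteq\cdots\supseteq Y_n$ is read as an admissible flag on $Y_1$; one also needs its analogue for a graded linear series containing an ample series, the positivity hypothesis being replaced by positivity of the appropriate restricted volume. The inclusion $\okbd_{Y_{1\bullet}}(W_\bullet(D|Y_1))\subseteq\okbd_{Y_\bullet}(D)_{x_1=0}$ I would obtain directly: a valuative point of $W_\bullet(D|Y_1)$ is $\tfrac1m\nu_{Y_{1\bullet}}(\overline s)$ with $\overline s=s|_{Y_1}\neq0$ for some $s\in H^0(X,\mathcal O_X(\lfloor mD\rfloor))$, and since $\overline s\neq0$ the divisor of $s$ does not contain $Y_1$, so $\nu_1(s)=0$ and the successive restrictions computing the remaining coordinates of $\nu_{Y_\bullet}(s)$ reproduce $\nu_{Y_{1\bullet}}(\overline s)$; one then passes to closed convex hulls. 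For the reverse inclusion I would compare volumes: both sides are compact convex subsets of $\R^{n-1}$, one contained in the other, and each has the same $\vol_{\R^{n-1}}$, namely $\vol_{X|Y_1}(D)$, which is positive because $Y_1\not\subseteq\bp(D)$ (by \cite{elmnp-restricted vol and base loci}) --- for $\okbd_{Y_{1\bullet}}(W_\bullet(D|Y_1))$ this is the formula recalled just before Theorem~\ref{slice}, and for the slice $\okbd_{Y_\bullet}(D)_{x_1=0}$ it comes from Lazarsfeld--Musta\c{t}\u{a}'s Fubini--type slicing analysis of $\okbd_{Y_\bullet}(D)$ \cite{lm-nobody}, whose $t=0$ slice volume equals $\vol_{X|Y_1}(D)$. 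Since a compact convex set contained in another of equal and positive $\vol_{\R^{n-1}}$ must coincide with it, equality follows.

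\emph{Iteration.} To pass from one coordinate to $n-k$ of them, I would chain the one--slice identity along $Y_1\supseteq\cdots\supseteq Y_{n-k}$. Since $Y_{n-k}\subseteq Y_i$ for $i\le n-k$, the hypothesis gives $Y_i\not\subseteq\bp(D)$, hence $\vol_{X|Y_i}(D)>0$, for all $i\le n-k$; and by transitivity of restriction of sections, $W_\bullet(D|Y_j)$ is the restriction to $Y_j$ of $W_\bullet(D|Y_{j-1})$, with $\vol_{Y_{j-1}|Y_j}(W_\bullet(D|Y_{j-1}))=\vol_{X|Y_j}(D)>0$, so the one--slice identity (in its graded--linear--series form) applies to $W_\bullet(D|Y_{j-1})$ relative to the prime divisor $Y_j$ on $Y_{j-1}$. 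Applying it successively --- to $D$ on $X$ with $Y_1$, then to $W_\bullet(D|Y_1)$ on $Y_1$ with $Y_2$, and so on --- I would obtain
\[
\okbd_{Y_\bullet}(D)_{x_1=\cdots=x_{n-k}=0}=\okbd_{Y_{1\bullet}}(W_\bullet(D|Y_1))_{x_2=\cdots=x_{n-k}=0}=\cdots=\okbd_{Y_{n-k\bullet}}(W_\bullet(D|Y_{n-k}))=\okbd_{Y_{n-k\bullet}}(D).
\]

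\emph{Main obstacle.} The substantive point is the slice--volume identity $\vol_{\R^{n-1}}(\okbd_{Y_\bullet}(D)_{x_1=0})=\vol_{X|Y_1}(D)$, which is exactly where $Y_1\not\subseteq\bp(D)$ must be used: without it the slice can strictly contain $\okbd_{Y_{1\bullet}}(W_\bullet(D|Y_1))$, so the argument has to exploit the hypothesis, e.g. through a decomposition $D\sim_\R A+E$ with $A$ ample and $Y_1\not\subseteq\Supp(E)$, or through the restricted--volume description of $\bp$. A secondary, more bookkeeping--type nuisance is running the argument for graded linear series rather than complete ones: one must check that each $W_\bullet(D|Y_{j-1})$ contains an ample series, so that the volume formula and the slicing analysis remain valid, and that this property survives restriction to $Y_j$ as long as $Y_j\not\subseteq\bp(D)$.
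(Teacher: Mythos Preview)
The paper does not prove Theorem~\ref{slice}; it is quoted from \cite[Theorem~1.1]{CPW-okbd II} and invoked only as input to Lemmas~\ref{lem-P.C2} and~\ref{lem-P.Cinf}. There is therefore no proof in this paper against which to compare your proposal.

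That said, your outline is the natural one and is essentially how the result is established in the cited reference, building on the first--slice case \cite[Theorem~4.26]{lm-nobody}: one inclusion by tracking valuative points, the other by matching $(n{-}1)$--dimensional volumes via the restricted volume, then iterating along the flag. The one point I would not downgrade to ``bookkeeping'' is the very issue you flag last. For the iteration to run, you need the Lazarsfeld--Musta\c{t}\u{a} slicing analysis to apply to the graded series $W_\bullet(D|Y_{j-1})$ on each $Y_{j-1}$, and the bare positivity $\vol_{X|Y_j}(D)>0$ does not by itself guarantee this: you must show that $W_\bullet(D|Y_{j-1})$ contains an ample series (Condition~(C) of \cite{lm-nobody}), which is where the hypothesis $Y_{j-1}\not\subseteq\bp(D)$ is actually used, e.g.\ through a decomposition $D\sim_\R A+E$ with $A$ ample and $Y_{j-1}\not\subseteq\Supp(E)$. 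Note also that the intermediate $Y_{j-1}$ need only be smooth at the center $x$, not globally, so the iteration takes place on possibly singular varieties and one has to check that the Okounkov--body formalism for graded linear series still behaves as expected there. These are exactly the technical points handled in \cite{CPW-okbd II}; once they are in place, your argument goes through.
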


The following result tells us that the shape of the Okounkov body is determined by the positive part of the divisorial Zariski decomposition.

\begin{lemma}[cf. {\cite[Lemma 3.9]{CHPW-asyba}, \cite[Theorem C]{AV-loc pos2}}]\label{lem-divzd}
Let $X$ be a smooth projective variety, and $D=P+N$ be the divisorial Zariski decomposition of a big divisor $D$ on $X$. Fix an admissible flag $Y_\bullet$ on $X$. Then we have
$$
\okbd_{Y_\bullet}(D)=\okbd_{Y_\bullet}(P) + \okbd_{Y_\bullet}(N).
$$
Furthermore, $\okbd_{Y_\bullet}(D)=\okbd_{Y_\bullet}(D-E) + \okbd_{Y_\bullet}(E)$ for every effective divisor $E$ with $E \leq N$.
\end{lemma}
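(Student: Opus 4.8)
The plan is to prove the additive decomposition $\okbd_{Y_\bullet}(D) = \okbd_{Y_\bullet}(P) + \okbd_{Y_\bullet}(N)$ first, and then deduce the refined statement by an induction on the number of prime components of $N$. The key point for the first equality is that the negative part $N$ contributes its valuation vector $\nu_{Y_\bullet}(N)$ rigidly: since $P$ is the maximal movable divisor below $D$ and $N = \sum_E \ord_E(||D||)E$, any $D' \in |D|_\R$ with $D' \geq 0$ must satisfy $D' \geq N$, i.e. $D' - N \in |P|_\R$. Hence the map $D' \mapsto D' - N$ gives a bijection $|D|_\R \to |P|_\R$. The main work is to track how this affine shift interacts with the iterated restriction procedure defining $\nu_{Y_\bullet}$.

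First I would establish the inclusion $\okbd_{Y_\bullet}(D) \supseteq \okbd_{Y_\bullet}(P) + \okbd_{Y_\bullet}(N)$, or more precisely the valuative statement $\nu_{Y_\bullet}(D') = \nu_{Y_\bullet}(D'-N) + \nu_{Y_\bullet}(N)$ for every $D' \in |D|_\R$, $D' \geq 0$. Since $N$ is a fixed effective divisor, $\ord_{Y_1}(D') = \ord_{Y_1}(D'-N) + \ord_{Y_1}(N)$, so $\nu_1$ is additive. The subtlety is at the next stages: $(D' - \nu_1(D')Y_1)|_{Y_1} = (D'-N - \nu_1(D'-N)Y_1)|_{Y_1} + (N - \nu_1(N)Y_1)|_{Y_1}$ as effective divisors on $Y_1$, and one continues inductively on the smooth-at-$x$ varieties $Y_i$. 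This requires only that restriction to $Y_i$ and the order function $\ord_{Y_{i+1}}$ are additive on effective divisors not containing $Y_{i+1}$, which holds because each $Y_i$ is smooth at $x$ and the relevant supports behave well there — this is the standard Lazarsfeld--Mustaţă setup applied to both $D'-N$ and $N$ simultaneously. Taking closures of convex hulls over all $D'$ then gives $\okbd_{Y_\bullet}(D) = \{ \nu_{Y_\bullet}(N)\} + \okbd_{Y_\bullet}(P)$; but $\okbd_{Y_\bullet}(N)$ is a single point (namely $\{\nu_{Y_\bullet}(N)\}$), since $N$ is rigid, i.e. $|N|_\R = \{N\}$ as $N$ is the negative part of its own divisorial Zariski decomposition. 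So the Minkowski sum is just a translation, and the first equality follows.

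For the reverse inclusion I would argue that any valuative point of $\okbd_{Y_\bullet}(P)$ lifts: given $P' \in |P|_\R$ with $P' \geq 0$, the divisor $P' + N \in |D|_\R$ is effective, and by the additivity just established $\nu_{Y_\bullet}(P' + N) = \nu_{Y_\bullet}(P') + \nu_{Y_\bullet}(N)$. Combining the two inclusions gives $\okbd_{Y_\bullet}(D) = \okbd_{Y_\bullet}(P) + \okbd_{Y_\bullet}(N)$. For the "furthermore" part, note that if $E \leq N$ is effective, then $D - E = P + (N - E)$ need not be the divisorial Zariski decomposition, but $D - E$ is still big with $|D-E|_\R \ni D' - E$ for every $D' \geq 0$ in $|D|_\R$ (again because $D' \geq N \geq E$), and $E$ is rigid since $E \leq N$ and $N$ is rigid. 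So the identical additivity argument — $\nu_{Y_\bullet}(D') = \nu_{Y_\bullet}(D' - E) + \nu_{Y_\bullet}(E)$ with $\nu_{Y_\bullet}(E)$ a fixed point — yields $\okbd_{Y_\bullet}(D) = \okbd_{Y_\bullet}(D - E) + \okbd_{Y_\bullet}(E)$. Alternatively this second statement follows from the first by writing $D = (D-E) + E$ and checking $N(D-E) = N - E$ and $N(E) = E$, so that applying the first equality twice gives $\okbd_{Y_\bullet}(D) = \okbd_{Y_\bullet}(P) + \okbd_{Y_\bullet}(N-E) + \okbd_{Y_\bullet}(E) = \okbd_{Y_\bullet}(D-E) + \okbd_{Y_\bullet}(E)$.

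The main obstacle is the bookkeeping in the inductive restriction step: one must verify that subtracting $N$ (or $E$) commutes with the whole tower of restrictions $(\,\cdot\,)|_{Y_1}, (\,\cdot\,)|_{Y_2}, \ldots$ and with the order functions at each stage, i.e. that no unexpected cancellation or failure of effectivity occurs when the supports of $D'-N$ and $N$ are restricted to the $Y_i$. This is where smoothness of $Y_i$ at $x$ is used, exactly as in \cite[\S 1]{lm-nobody}, together with the observation that $\ord_{Y_{i+1}}$ of a sum of effective divisors neither of which contains $Y_{i+1}$ is the sum of the orders; the genuinely new input over the classical case is merely that we apply the construction to the fixed rigid divisor $N$ in parallel, which contributes a constant vector. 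Once this additivity-along-the-flag lemma is in place, everything else is formal manipulation of Minkowski sums and convex hulls.
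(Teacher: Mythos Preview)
Your argument is correct, and your ``alternative'' for the second assertion is exactly the paper's proof: the paper takes the first equality $\okbd_{Y_\bullet}(D)=\okbd_{Y_\bullet}(P)+\okbd_{Y_\bullet}(N)$ as a black box from \cite[Lemma 3.9]{CHPW-asyba} and \cite[Theorem C]{AV-loc pos2}, then observes that $D-E=P+(N-E)$ \emph{is} the divisorial Zariski decomposition, applies the first equality to $D-E$, and uses $\okbd_{Y_\bullet}(N)=\okbd_{Y_\bullet}(N-E)+\okbd_{Y_\bullet}(E)$ (a sum of two singleton sets). Your direct proof of the first assertion via the bijection $|D|_\R\to|P|_\R$, $D'\mapsto D'-N$, and the additivity of $\nu_{Y_\bullet}$ on effective divisors is essentially what lies behind the cited references, so you are simply unpacking what the paper imports.

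One small correction: you write that $D-E=P+(N-E)$ ``need not be the divisorial Zariski decomposition,'' but in fact it always is (this is \cite[Lemma III.1.8]{nakayama}, and you use it yourself in the alternative argument). This slip is harmless for your direct approach, since that argument only needs the bijection $|D|_\R\to|D-E|_\R$, $D'\mapsto D'-E$, which you establish correctly from $D'\geq N\geq E$.
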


\begin{proof}
The first assertion is nothing but \cite[Lemma 3.9]{CHPW-asyba} and  \cite[Theorem C]{AV-loc pos2}.
Since $\okbd_{Y_\bullet}(N)$ consists of a single valuative point in $\R_{\geq 0}^n$, it follows that $\okbd_{Y_\bullet}(N)=\okbd_{Y_\bullet}(N-E) + \okbd_{Y_\bullet}(E)$.
Now, observe that $D-E=P+(N-E)$ is the divisorial Zariski decomposition. Thus we have
$$
\okbd_{Y_\bullet}(D-E) + \okbd_{Y_\bullet}(E)= \okbd_{Y_\bullet}(P) + \okbd_{Y_\bullet}(N-E) + \okbd_{Y_\bullet}(E) = \okbd_{Y_\bullet}(P) + \okbd_{Y_\bullet}(N)=\okbd_{Y_\bullet}(D).
$$
This finishes the proof.
\end{proof}

Next, we define various types of admissible flags.

\begin{definition}\label{def-flags}
Let $X$ be a smooth projective variety of dimension $n$, and $x \in X$ be a point.
Consider a birational morphism $f \colon \widetilde{X} \to X$ from another smooth projective variety $\widetilde{X}$.
\begin{enumerate}[wide, labelindent=3pt]
\item An admissible flag $\widetilde{Y}_\bullet$ on $\widetilde{X}$ is said to be \emph{centered at $x$} if $f(\widetilde{Y}_n)=\{ x \}$.
\item An admissible flag $\widetilde{Y}_\bullet$ on $\widetilde{X}$ is said to be \emph{proper over $X$} if $\codim f(\widetilde{Y}_i)=i$ holds for each $0\leq i \leq n$.
\item An admissible flag $\widetilde{Y}_\bullet$ on $\widetilde{X}$ which is proper over $X$ is said to be \emph{induced} (\emph{by an admissible flag $Y_\bullet$ on $X$}) if  $f(\widetilde{Y}_i) = Y_i$ for each $0\leq i \leq n$.
\item An admissible flag $\widetilde{Y}_\bullet$ on $\widetilde{X}$ is said to be \emph{infinitesimal over $X$} if $f(\widetilde{Y}_1)$ is a point.
\item An admissible flag $\widetilde{Y}_\bullet$ on $\widetilde{X}$ which is infinitesimal over $X$ is said to be \emph{induced} (\emph{by an admissible flag $Y_\bullet$ on $X$}) if there is a proper admissible flag $Y_\bullet'$ on $\widetilde{X}$ induced by $Y_\bullet$ such that $f(\widetilde{Y}_1)=Y_n$ and $\widetilde{Y}_i = \widetilde{Y}_1 \cap Y_{i-1}'$ for $2 \leq i \leq n$. Note that $\widetilde{Y}_n = Y_n'$.
\end{enumerate}
\end{definition}

To show the existence of induced proper/infinitesimal admissible flags, we introduce the following.

\begin{definition}\label{def-admismor}
Let  $f \colon \widetilde{X} \to X$ be a birational morphism between smooth projective varieties of dimension $n$, and $Y_\bullet$ be an admissible flag on $X$. We consider the strict transforms $\widetilde{Y}_{i+1}:=(f|_{\widetilde{Y}_i})_*^{-1} Y_{i+1}$ where $\widetilde{Y}_0:=\widetilde{X}$ and $f|_{\widetilde{Y}_i} \colon \widetilde{Y}_i \to Y_i$ is a birational morphism for $0 \leq i \leq n-1$. Let $\Gamma$ be an effective divisor on $X$, and $\nu_{Y_\bullet}(\Gamma)=(\nu_1, \cdots, \nu_n)$.
\begin{enumerate}[wide, labelindent=3pt]
\item  We say $f$ is a \emph{$Y_\bullet$--admissible morphism} if $\widetilde{Y}_\bullet$ is an admissible flag on $\widetilde{X}$, i.e., each $\widetilde{Y}_i$ is smooth  at the point $\widetilde{Y}_n$.
\item We say $f$ is a \emph{$Y_\bullet$--admissible log resolution of $(X, \Gamma)$} if each $f|_{\widetilde{Y}_i} \colon \widetilde{Y}_i \to Y_i$ is a log resolution of $(Y_i, \Gamma_i + Y_{i+1})$ for $0 \leq i \leq n-1$, where $\Gamma_0:=\Gamma$ and $\Gamma_i:=(\Gamma_{i-1}-\nu_i Y_i)|_{Y_i}$ for $1 \leq i \leq n-1$.
\end{enumerate}
\end{definition}

\begin{example}\label{ex-y-admissible}
We use the notations in Definition \ref{def-admismor}.\\[1pt]
(1) If $f \colon \widetilde{X} \to X$ is isomorphic over a neighborhood of $Y_n$, then it is $Y_\bullet$--admissible. In this case,
$\okbd_{Y_\bullet}(D)=\okbd_{\widetilde{Y}_\bullet}(f^*D)$ (cf. \cite[Lemmas 3.4 and 3.5]{CHPW-asyba}).\\[1pt]
(2) If $f \colon \widetilde{X} \to X$ is a composite of blow-ups of points, then $f$ is $Y_\bullet$--admissible for any admissible flag $Y_\bullet$ on $X$. If furthermore each $Y_i$ is smooth for $1 \leq i \leq n$, then $f$ is a $Y_\bullet$--admissible log resolution of $(X, 0)$. \\[1pt]
(3) Let $f \colon \widetilde{X} \to X$ be the blow-up of a smooth projective $3$-fold $X$ along a smooth projective curve $C$. Suppose that there is an admissible flag $Y_\bullet$ on $X$ such that locally around $Y_3$, the following holds: $Y_1 = \mathbb{A}^2_{x,y}$ is an affine space whose origin is $Y_3$, $C \cap Y_1 = V(x^2, y)$, and $Y_2$ is a general line passing through $Y_3$.
Then $\widetilde{Y}_1$ is singular at $\widetilde{Y}_3$, so $f$ is not $Y_\bullet$--admissible.\\[1pt]
(4) Let $D_1, \cdots, D_{n}$ be effective divisors on $X$ such that $Y_0=X, Y_i:=D_1 \cap \cdots \cap D_i$ with $1 \leq i \leq n$ form an admissible flag $Y_\bullet$ on $X$.
Let $\Gamma$ be an effective divisor.
Then any log resolution $f \colon \widetilde{X} \to X$ of $(X, D_1+ \cdots + D_{n}+\Gamma)$ is a $Y_\bullet$--admissible log resolution of $(X, \Gamma)$.\\[1pt]
(5) Let $Y_\bullet$ be an admissible flag on $X$, and take a birational morphism $\varphi \colon X' \to X$ which is the composition of successive embedded resolutions of singularities of $Y_{n-1}, \cdots, Y_1$. If we denote $Y'_0:=X'$ and $Y_{i+1}':=(\varphi|_{Y_i'})^{-1}_*Y_{i+1}$ where $\varphi|_{Y_i'} \colon Y_i' \to Y_i$ is a birational morphism for each $0 \leq i \leq n-1$, then $Y_\bullet'$ is a proper admissible flag on $X'$ induced by $Y_\bullet$, and all $Y_i'$ are smooth.
Now let $E_1:=Y_1'$ on $X'$, and $h_2 \colon X_2 \to X'$ be the blow-up of $X'$ along $Y_2'$ with exceptional divisor $E_2$. By abuse of notation, we denote the strict transformation $h_{2*}^{-1}E_1$ of $E_1$ on $X_2$ also by $E_1$.
For $3 \leq i \leq n$, let $h_i \colon X_i \to X_{i-1}$ be the blow-up of $X_{i-1}$ along $\big((h_2 \circ \cdots \circ h_{i-1})|_{E_1 \cap \cdots \cap E_{i-1}}\big)_*^{-1} Y_i'$ with exceptional divisor $E_i$, where by abuse of notation again $E_j$ denotes the strict transform on $X_{i-1}$ of $E_j$ for each $1 \leq j \leq i-1$.
Let $\widetilde{X}:=X_n$, and $E_j$ be the strict transforms on $\widetilde{X}$ of $E_j$ for $1 \leq j \leq n$.
Then $g := \varphi \circ h_2 \circ \cdots \circ h_n \colon \widetilde{X} \to X$ is a birational morphism, and $Y_\bullet''$ is a proper admissible flag on $\widetilde{X}$ induced by $Y_\bullet$, where $Y_0''=\widetilde{X}, Y_i'':=E_1 \cap \cdots \cap E_i$ for $1 \leq i \leq n$.
Let $\Gamma$ be an effective divisor.
If $f' \colon \widetilde{X}' \to \widetilde{X}$ is a log resolution of $(\widetilde{X}, E+E_1+g^*\Gamma)$ where $E$ is the sum of all irreducible exceptional divisors over $X$, then $f:=g \circ f' \colon \widetilde{X}' \to X$ is a $Y_\bullet$--admissible log resolution of $(X, \Gamma)$. We may assume that $f$ factors through the blow-up of $X$ at $Y_n$.
\end{example}

If $f$ is a $Y_\bullet$--admissible morphism, then the admissible flag $\widetilde{Y}_\bullet$ on $\widetilde{X}$ consisting of subvarieties $\widetilde{Y}_i$ for $0 \leq i \leq n$ is a proper admissible flag induced by $Y_\bullet$. Conversely, if $\widetilde{Y}_\bullet$ is a proper admissible flag on $\widetilde{X}$ over $X$ such that $Y_i=f(\widetilde{Y}_i)$ for $0 \leq i \leq n$ form an admissible flag $Y_\bullet$ on $X$, then $f$ is a $Y_\bullet$--admissible morphism and $\widetilde{Y}_\bullet$ is induced by $Y_\bullet$. Thus we may say that $Y_\bullet$ and $\widetilde{Y}_\bullet$ determine each other.
By the following lemma, the same is true for the corresponding Okounkov bodies:
if $\widetilde{Y}_\bullet$ is a proper admissible flag on $\widetilde{X}$ induced by $Y_\bullet$ and $D$ is any divisor on $X$, then
$$
\text{
$\okbd_{Y_\bullet}(D)$ and $\okbd_{\widetilde{Y}_\bullet}(f^*D)$ determine each other.}
$$
Note that we do not need to assume that the subvarieties $Y_i$ and $\widetilde{Y}_i$ are smooth for $0 \leq i \leq n$.

\begin{lemma}[{\cite[Lemma 4.1]{CPW-okbd abundant}}]\label{lem-bir}
Let $f \colon \widetilde{X} \to X$ be a birational morphism between smooth projective varieties of dimension $n$, and $\widetilde{Y}_\bullet$ be a proper admissible flag on $\widetilde{X}$ induced by an admissible flag $Y_\bullet$ on $X$.
For $1 \leq i \leq n$, we write $f|_{\widetilde{Y}_{i-1}}^*Y_i = \widetilde{Y}_i + E_i$ for some effective $f|_{\widetilde{Y}_{i-1}}$--exceptional divisor $E_i$ on $\widetilde{Y}_{i-1}$.
If $\mathbf{x}=(x_1, \cdots, x_n)=\nu_{Y_\bullet}(D)$ is a valuative point of an effective divisor $D$ on $X$, then we have
$$
\nu_{\widetilde{Y}_\bullet}(f^*D)=\mathbf{x}+ \sum_{i=1}^{n-1} x_i \cdot \nu_{\widetilde{Y}_{i\bullet}}(E_i|_{\widetilde{Y}_i})
$$
where we regard $\nu_{\widetilde{Y}_{i\bullet}}(E_i|_{\widetilde{Y}_i}) \in \R^{n-i}$ as a point in $\{ 0 \}^i \times \R^{n-i} \subseteq \R^n$.
In particular, for any divisor $D$ on $X$, we have
$$
\okbd_{\widetilde{Y}_\bullet}(f^*D)=\left\{\left. \mathbf{x}+ \sum_{i=1}^{n-1} x_i \cdot \nu_{\widetilde{Y}_{i\bullet}}(E_i|_{\widetilde{Y}_i})\;~\right| \mathbf{x}=(x_1, \cdots, x_n) \in \okbd_{Y_\bullet}(D)\right\}.
$$
\end{lemma}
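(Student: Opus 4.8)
Although this is \cite[Lemma 4.1]{CPW-okbd abundant}, the argument is short, so I will sketch it. The plan is to first establish the pointwise formula for $\nu_{\widetilde Y_\bullet}(f^*D')$ by induction on the length $n$ of the flag (with the flag members allowed to be singular, as in the remark preceding the statement), and then to read off the description of $\okbd_{\widetilde Y_\bullet}(f^*D)$. Observe first that, since $\widetilde Y_1$ is irreducible of codimension one and dominates $Y_1$, it is the unique non-exceptional component of $f^*Y_1$, so $\widetilde Y_1\not\subseteq\Supp(E_1)$; writing $D' = x_1 Y_1 + R$ with $Y_1\not\subseteq\Supp(R)$, we get $f^*D' = x_1\widetilde Y_1 + x_1 E_1 + f^*R$, and reading off the coefficient of $\widetilde Y_1$ gives $\nu_1(f^*D') = \ord_{\widetilde Y_1}(f^*D') = x_1$. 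This settles the case $n=1$ (the sum being empty) as well as the first coordinate in general.

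For the inductive step, from $f^*Y_1 = \widetilde Y_1 + E_1$ we rewrite $f^*D' - x_1\widetilde Y_1 = f^*(D' - x_1 Y_1) + x_1 E_1$, and restricting to $\widetilde Y_1$ --- an operation that is part of the construction of $\nu_{\widetilde Y_\bullet}$ --- gives
$$
\bigl(f^*D' - x_1\widetilde Y_1\bigr)\big|_{\widetilde Y_1} \;=\; g^*\bigl((D'-x_1Y_1)|_{Y_1}\bigr) \;+\; x_1\, E_1|_{\widetilde Y_1}, \qquad g:=f|_{\widetilde Y_1},
$$
where $Y_1\not\subseteq\Supp(D'-x_1Y_1)$ and $\widetilde Y_1\not\subseteq\Supp(E_1)$ make pullback and restriction compatible here. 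Both summands on the right are effective, so applying $\nu_{\widetilde Y_{1\bullet}}$ and using its additivity on effective divisors gives
$$
\nu_{\widetilde Y_{1\bullet}}\bigl((f^*D'-x_1\widetilde Y_1)|_{\widetilde Y_1}\bigr) = \nu_{\widetilde Y_{1\bullet}}\bigl(g^*((D'-x_1Y_1)|_{Y_1})\bigr) + x_1\,\nu_{\widetilde Y_{1\bullet}}\bigl(E_1|_{\widetilde Y_1}\bigr).
$$
Now $\widetilde Y_{1\bullet}$ is a proper admissible flag on $\widetilde Y_1$ induced by $Y_{1\bullet}$ via $g$, whose associated exceptional divisors are again $E_2,\dots,E_{n-1}$ (because $\widetilde Y_j\subseteq\widetilde Y_1$ and $f|_{\widetilde Y_{j-1}} = g|_{\widetilde Y_{j-1}}$ for $j\geq 2$), and $(x_2,\dots,x_n) = \nu_{Y_{1\bullet}}((D'-x_1Y_1)|_{Y_1})$ by the very construction of $\nu_{Y_\bullet}(D')$. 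So the induction hypothesis applied to $g$ identifies the first term on the right with $(x_2,\dots,x_n) + \sum_{i=2}^{n-1} x_i\,\nu_{\widetilde Y_{i\bullet}}(E_i|_{\widetilde Y_i})$; prepending the coordinate $x_1$ and adding $x_1\,\nu_{\widetilde Y_{1\bullet}}(E_1|_{\widetilde Y_1})$ yields $\nu_{\widetilde Y_\bullet}(f^*D') = \mathbf x + \sum_{i=1}^{n-1} x_i\,\nu_{\widetilde Y_{i\bullet}}(E_i|_{\widetilde Y_i})$, as desired.

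For the statement about Okounkov bodies, put $v_i := \nu_{\widetilde Y_{i\bullet}}(E_i|_{\widetilde Y_i})\in\{0\}^i\times\R^{n-i}$ and define $T\colon\R^n\to\R^n$ by $T(\mathbf x):=\mathbf x + \sum_{i=1}^{n-1} x_i v_i$, so the formula just proved reads $\nu_{\widetilde Y_\bullet}(f^*D') = T(\nu_{Y_\bullet}(D'))$ for every $D'\in|D|_\R$. Since each $v_i$ vanishes in its first $i$ coordinates, $T$ is a unipotent upper-triangular linear automorphism of $\R^n$, hence a homeomorphism compatible with the operations of taking convex hulls and closures. Moreover $|f^*D|_\R = \{f^*D' : D'\in|D|_\R\}$: if $E\geq 0$ with $E\sim_\R f^*D$, then $f_*E\in|D|_\R$, and $E - f^*f_*E$ is an $f$-exceptional, $\R$-linearly trivial divisor, hence zero by the negativity lemma, so $E = f^*f_*E$. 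Therefore $\nu_{\widetilde Y_\bullet}(|f^*D|_\R) = T(\nu_{Y_\bullet}(|D|_\R))$, and passing to closed convex hulls gives $\okbd_{\widetilde Y_\bullet}(f^*D) = T(\okbd_{Y_\bullet}(D))$, which is the asserted description.

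The step I expect to be the main obstacle is the compatibility identity $(f^*G)|_{\widetilde Y_1} = g^*(G|_{Y_1})$ for $G = D' - x_1 Y_1$, together with the correct reading of $E_1|_{\widetilde Y_1}$ as the exceptional datum one level down, because $\widetilde Y_1$ and $Y_1$ need not be smooth away from the flag point. The remedy is to perform the computation in a neighborhood of $\widetilde Y_n$, where every member of the flag is smooth by admissibility and all pullback and restriction operations are unambiguous; equivalently, to work inside the flag formalism of \cite{lm-nobody} and \cite{CHPW-okbd I}. Everything else is routine bookkeeping.
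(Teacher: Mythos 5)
Your argument is correct: the induction on the length of the flag via additivity of $\nu_{Y_\bullet}$ and the compatibility $(f^*G)|_{\widetilde Y_1}=g^*(G|_{Y_1})$, together with the identification $|f^*D|_{\R}=f^*|D|_{\R}$ and the observation that $\mathbf{x}\mapsto\mathbf{x}+\sum_i x_i v_i$ is a linear automorphism commuting with closed convex hulls, is exactly the standard route to this statement (the paper itself only cites \cite[Lemma 4.1]{CPW-okbd abundant} and gives no proof). Your closing remark about working in a neighborhood of $\widetilde Y_n$, where all flag members are smooth and all restrictions and pullbacks are unambiguous, correctly disposes of the only delicate point, since each $\ord_{Y_i}$ is computed along a subvariety containing the flag point and is therefore determined on any such neighborhood.
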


Now, we show the existence of induced infinitesimal admissible flags under suitable assumptions.

\begin{lemma}\label{lem-indinf}
Let $Y_\bullet$ be an admissible flag on a smooth projective variety $X$ centered at a point $x \in X$, and $f \colon \widetilde{X} \to X$ be a $Y_\bullet$--admissible log resolution of $(X, 0)$ between smooth projective varieties. If $f$ factors through the blow-up of $X$ at $x$, then there exists a unique infinitesimal admissible flag $\widetilde{Y}_\bullet$ on $\widetilde{X}$ induced by $Y_\bullet$.
\end{lemma}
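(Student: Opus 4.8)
The plan is to argue by induction on $n=\dim X$. The case $n\le 1$ is degenerate, $n=2$ serves as the base step, and for $n\ge 3$ I would reduce to the strict transform $Y_1'$ of $Y_1$. First I would record the setup. Factor $f=\pi\circ g$, where $\pi\colon X_1:=\Bl_x X\to X$ is the blow-up at $x$ with exceptional divisor $E_0$ and $g\colon\widetilde X\to X_1$ is the induced morphism. Since $f$ is a $Y_\bullet$-admissible log resolution of $(X,0)$, each restriction $f|_{Y_i'}\colon Y_i'\to Y_i$ (to the successive strict transforms $Y_i'$) is a log resolution of $(Y_i,Y_{i+1})$; hence the $Y_i'$ are smooth, $f|_{Y_{n-1}'}$ is an isomorphism over a neighbourhood of $x$, the bottom $Y_n'$ of $Y_\bullet'$ is a single point $x'$, and $f$ is $Y_\bullet$-admissible with $Y_\bullet'$ the unique proper admissible flag on $\widetilde X$ induced by $Y_\bullet$. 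The $i=0$ instance gives that $f^{-1}(Y_1)\cup\Exc(f)$ is a simple normal crossing (snc) divisor, which I would use throughout.

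\emph{Base case $n=2$.} Here $x'=Y_2'$ lies on $Y_1'$ and on $f^{-1}(x)\subseteq\Exc(f)$, so at least one $f$-exceptional prime divisor passes through $x'$; by the snc property, with $Y_1'$ occupying one of the at most two branches through $x'$, there is exactly one such divisor $\widetilde Y_1$. As $f|_{Y_1'}$ is an isomorphism near $x$ we get $\widetilde Y_1\cap Y_1'\subseteq(f|_{Y_1'})^{-1}(x)=\{x'\}$, so $\widetilde Y_\bullet\colon\widetilde X\supseteq\widetilde Y_1\supseteq\{x'\}$ is an admissible flag, infinitesimal over $X$ (because $f(\widetilde Y_1)=\{x\}$) and induced by $Y_\bullet$; uniqueness holds because any induced infinitesimal flag has proper part $Y_\bullet'$, hence divisorial member an $f$-exceptional prime divisor through $(f|_{Y_1'})^{-1}(x)=\{x'\}$, forcing it to equal $\widetilde Y_1$.

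\emph{Inductive step $n\ge 3$.} I would apply the lemma in dimension $n-1$ to $f_1:=f|_{Y_1'}\colon Y_1'\to Y_1$ with the flag $Y_{1,\bullet}\colon Y_1\supseteq Y_2\supseteq\cdots\supseteq Y_n=\{x\}$: the hypothesis at levels $1\le i\le n-1$ makes $f_1$ a $Y_{1,\bullet}$-admissible log resolution of $(Y_1,0)$, and $f_1$ factors through $\Bl_x Y_1$ since the image of $Y_1'$ in $X_1$ is the strict transform of $Y_1$, which equals $\Bl_x Y_1$ as $Y_1$ is smooth at $x$. Induction produces a unique infinitesimal admissible flag $\widetilde Z_\bullet$ on $Y_1'$ induced by $Y_{1,\bullet}$; its proper part is the tail $(Y_1',Y_2',\ldots,Y_n')$ of $Y_\bullet'$, so $\widetilde Z_1=:G$ is an $f_1$-exceptional prime divisor with $f_1(G)=\{x\}$, $\widetilde Z_j=G\cap Y_j'$, and bottom point $G\cap Y_{n-1}'=\{x'\}$ (this point lies in $G\subseteq f^{-1}(x)$ and in $Y_{n-1}'$, and $(f|_{Y_{n-1}'})^{-1}(x)=\{x'\}$). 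Now $G$ has codimension $2$ in $\widetilde X$ and lies in $\Exc(f)\cap Y_1'$, while $Y_1'$ is a component of the snc divisor $f^{-1}(Y_1)\cup\Exc(f)$; hence $G$ is a connected component of $\widetilde Y_1\cap Y_1'$ for a unique $f$-exceptional prime divisor $\widetilde Y_1$ of $\widetilde X$ (the second snc branch along $G$). Set $\widetilde Y_0:=\widetilde X$, keep $\widetilde Y_1$, and put $\widetilde Y_i:=\widetilde Y_1\cap Y_{i-1}'$ for $2\le i\le n$. One checks $\widetilde Y_1\subseteq f^{-1}(x)$: at a general point $p\in G$ the only branches of $f^{-1}(Y_1)\cup\Exc(f)$ are $Y_1'$ and $\widetilde Y_1$, and the pure-codimension-one set $f^{-1}(x)$, which is supported on components of $\Exc(f)$ and passes through $p$ while $Y_1'$ (dominating $Y_1$) is not contained in it, must therefore contain $\widetilde Y_1$; so $f(\widetilde Y_1)=\{x\}$ and $\widetilde Y_\bullet$ is infinitesimal over $X$. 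Granting that $\widetilde Y_1\cap Y_1'$ is irreducible, hence $=G$, we get $\widetilde Y_i=(\widetilde Y_1\cap Y_1')\cap Y_{i-1}'=G\cap Y_{i-1}'=\widetilde Z_{i-1}$ for $2\le i\le n$, so $\widetilde Y_2\supseteq\cdots\supseteq\widetilde Y_n$ is the admissible flag $\widetilde Z_\bullet$ on $Y_1'$; thus each $\widetilde Y_i$ is irreducible of codimension $i$ in $\widetilde X$ and smooth at $x'$, and $\widetilde Y_1$ is smooth as an snc component, so $\widetilde Y_\bullet$ is an admissible flag induced by $Y_\bullet$ via $Y_\bullet'$. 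Uniqueness follows by restricting an arbitrary induced infinitesimal flag to $Y_1'$, applying the inductive uniqueness to recover $G$, and then invoking the uniqueness of the second branch along $G$.

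The step I expect to be the main obstacle is the irreducibility of $\widetilde Y_1\cap Y_1'$ (equivalently of every $\widetilde Y_i$): a priori the snc intersection $\widetilde Y_1\cap Y_1'$, though smooth, could be disconnected, acquiring a component $G''$ that is also an $f_1$-exceptional prime divisor lying over $x$ but distinct from the distinguished $G=\widetilde Z_1$. I would rule this out by a closer study of $g$ over $E_0$: the inclusion $G\subseteq f^{-1}(x)$ forces $g(G)\subseteq E_0\cap g(Y_1')=\P(T_xY_1)$, which pins $g(\widetilde Y_1)$ inside $E_0$, and one then combines the connectedness of $f^{-1}(x)\cap Y_1'=(g|_{Y_1'})^{-1}\big(\P(T_xY_1)\big)$ with the snc structure at each flag level to conclude $\widetilde Y_1\cap Y_1'=G$. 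This is the one place where the hypotheses that $f$ factors through $\Bl_x X$ and that $f$ is a $Y_\bullet$-admissible log resolution are both used essentially.
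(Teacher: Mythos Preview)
Your inductive scheme is the same as the paper's: the paper's descending induction on the index $i$ (from the surface $Y_{n-2}'$ up to $\widetilde X$) is your induction on $n=\dim X$ (restricting to $Y_1'$), and in both cases the step lifts the unique exceptional prime divisor one level using the snc structure of $\Exc(f)\cup Y_\bullet'$. You supply more detail than the paper in several places and rightly isolate the irreducibility of $\widetilde Y_1\cap Y_1'$ as the delicate point; the paper handles exactly this step with the single sentence ``it is straightforward to check that this divisor $E_{k-1}$ on $Y_{k-1}'$ satisfies all required properties'' and offers no further argument.
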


\begin{proof}
Let $n:=\dim X$, and denote by $Y_\bullet'$ the proper admissible flag on $\widetilde{X}$ induced by $Y_\bullet$.
We consider the birational morphism
$
f_i:=f|_{Y_{i}'} \colon Y_{i}' \to Y_{i}
$
between projective varieties of dimension $n-i$ for each $0 \leq i \leq n-2$.
We claim that for each $0 \leq i \leq n-2$, there exists a unique $f|_{Y_{i}'}$--exceptional prime divisor $E_i$ on $Y_{i}'$ such that $f_i(E_i)=\{ x \}$, the variety $E_i \cap Y_{n-1}'$ consists of a single point $x'$, and $E_i \cap Y_{j-1}'$ is an irreducible subvariety of codimension $j-i$ in $Y_{i}'$ and is smooth at $x'$ for $i+2 \leq j \leq n-1$. The claim for $i = 0$ implies that if we let $\widetilde{Y}_0:=\widetilde{X}, \widetilde{Y}_1:=E,$ and $\widetilde{Y}_i:=E \cap Y_{i-1}'$ for $2 \leq i \leq n$, then $\widetilde{Y}_\bullet$ is a unique infinitesimal admissible flag on $\widetilde{X}$ induced by $Y_\bullet$.
To prove the claim, we proceed by induction on the dimension of $Y_i$. The claim is trivial for the surface case where $i=n-2$. We suppose that for a positive integer $k \leq n-2$, the claim holds for all $i$ with $k \leq i \leq n-2$.
Then we can find an $f_{k-1}$--exceptional prime divisor $E_{k-1}$ on $Y_{k-1}'$ such that $E_{k-1}|_{Y_{k}'}=E_k$.
Since $f_{k-1}$ is a log resolution of $(Y_{k-1}, Y_{k})$, it follows that $\text{exc}(f_{k-1}) \cup Y_k'$ has a simple normal crossing support on $Y_{k-1}'$. Thus the divisor $E_{k-1}$ on $Y_{k-1}'$ is uniquely determined. Moreover, it is straightforward to check that this divisor $E_{k-1}$ on $Y_{k-1}'$ satisfies all required properties for applying the induction. We have shown the claim, so we complete the proof.
\end{proof}

\begin{example}\label{ex-indinf}
Let $f \colon \widetilde{X} \to X$ be the blow-up of a smooth projective variety $X$ of dimension $n$ at a point $x \in X$ with the exceptional divisor $E$, and $E_\bullet$ be an infinitesimal admissible flag over $x$ in the sense of \cite[Definition 2.1]{AV-loc pos} (cf. \cite[Section 5]{lm-nobody}), i.e., $E_0 = \widetilde{X}, E_1=E$ and $E_i$ is an $(n-i)$-dimensional linear subspace of $E \cong \P^{n-1}$ for $2 \leq i \leq n$. We claim that $E_\bullet$ is an induced infinitesimal admissible flag over $X$. There is a smooth hypersurface $Y_1 \subseteq X$ such that $\widetilde{Y}_1 \cap E = E_2$ where $\widetilde{Y}_1:=f_*^{-1}Y_1$. Inductively, for $3 \leq i \leq n$, we can find a smooth hypersurface $Y_{i-1} \subseteq Y_{i-2}$ such that $\widetilde{Y}_{i-1} \cap E = E_{i}$ where $\widetilde{Y}_{i-1}:=(f|_{\widetilde{Y}_{i-2}})^{-1}_* Y_{i-1}$. By letting $Y_0=X, Y_n=\{ x \}$, the subvarieties $Y_i$ form an admissible flag $Y_\bullet$ on $X$. Then $f$ is a $Y_\bullet$--admissible log resolution of $(X, 0)$, and $E_\bullet$ is an infinitesimal admissible flag induced by the admissible flag $Y_\bullet$.
On the other hand, there are many other admissible flags on $X$ that induce the same infinitesimal admissible flag $E_\bullet$.
\end{example}

\begin{example}\label{ex-nonind}
For a smooth projective variety $X$ of dimension $n$, there exists a proper or infinitesimal admissible flag on a higher birational model $\widetilde{X} $ of $X$ that is not induced by any admissible flag on $X$.
If $\widetilde{Y}_\bullet$ is a proper admissible flag on $\widetilde{X}$ and $f(\widetilde{Y}_i)$ is singular at $f(\widetilde{Y}_n)$, then it is clearly not induced over $X$. For the infinitesimal case, let $f \colon \widetilde{X} \to X$ be the composite of the blow-ups at a point $x$ and an infinitely near point to $x$ with the exceptional divisor $E$ and $E'$, respectively. Then any infinitesimal admissible flag $\widetilde{Y}_\bullet$ on $\widetilde{X}$ satisfying $\widetilde{Y}_1=E'$ and $\widetilde{Y}_2=E' \cap E$ is not induced over $X$.
\end{example}

The following is useful in the study of the Okounkov bodies with respect to induced infinitesimal admissible flags. It is a counterpart of the first assertion in Lemma \ref{lem-bir} under a stronger assumption.

\begin{lemma}\label{lem-bir-indinf}
Let $Y_\bullet$ be an admissible flag on a smooth projective variety $X$ of dimension $n$, and $D$ be an effective divisor on $X$.
Let $f \colon \widetilde{X} \to X$ be a $Y_\bullet$--admissible log resolution of $(X, D)$, and $\widetilde{Y}_\bullet$ (respectively, $Y_\bullet'$) be an infinitesimal (respectively, a proper) admissible flag on $\widetilde{X}$ induced by $Y_\bullet$.
If $\nu_{Y_\bullet'}(f^*D+E)=(x_1, \cdots, x_{n-1}, x_n)$
for an $f$--exceptional effective divisor $E$, then we have $\nu_{\widetilde{Y}_\bullet}(f^*D+E)=(x_n, x_1, \cdots, x_{n-1})$.
In particular, $\nu_{Y_\bullet'}(f^*D+E)$ and $\nu_{\widetilde{Y}_\bullet}(f^*D+E)$ determine each other.
\end{lemma}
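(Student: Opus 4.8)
The plan is to compute the two valuative points explicitly and observe that one is the cyclic shift of the other. I would first treat the proper induced flag. Since $f$ is a $Y_\bullet$--admissible log resolution of $(X,D)$ we have $Y_{n-1}\not\subseteq\Supp(D)$, hence $Y_i\not\subseteq\Supp(D)$ for all $i\le n-1$ because $Y_{n-1}\subseteq Y_i$; as $D$ is effective this forces $\ord_{Y_i}(D|_{Y_{i-1}})=0$ for $1\le i\le n-1$, so $\nu_{Y_\bullet}(D)=(0,\dots,0,d)$ with $d:=\ord_{Y_n}(D|_{Y_{n-1}})$. Applying Lemma~\ref{lem-bir} with $D'=D$, every correction term vanishes since its coefficient $\nu_{Y_\bullet}(D)_i$ is zero for $i\le n-1$, so $\nu_{Y_\bullet'}(f^*D)=\nu_{Y_\bullet}(D)=(0,\dots,0,d)$. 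Hence $x_i=0$ for $i<n$ and $x_n=d$, and it remains to prove $\nu_{\widetilde{Y}_\bullet}(f^*D)=(d,0,\dots,0)$.

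For the infinitesimal flag, write $f^*D=aE+B$ with $E=\widetilde{Y}_1$, $a=\ord_E(f^*D)$ and $B\ge 0$ not containing $E$. Unwinding the definition of $\nu_{\widetilde{Y}_\bullet}$, its first coordinate is $a$ and the remaining ones are $\nu_{E_\bullet}(B|_E)$, where $E_\bullet\colon E\supseteq\widetilde{Y}_2\supseteq\dots\supseteq\widetilde{Y}_n$ is the induced flag on $E$. The crucial claim is
\begin{equation}\tag{$\dagger$}
\text{for each }2\le i\le n,\quad \widetilde{Y}_i\ \text{lies in no irreducible component of }f^*D\text{ other than }E.
\end{equation}
Granting $(\dagger)$, at each step of the computation of $\nu_{E_\bullet}(B|_E)$ the subvariety $\widetilde{Y}_i$ is not cut out on $\widetilde{Y}_{i-1}$ by any component of $B$ (such a component would have to contain $\widetilde{Y}_i$), so $\nu_{E_\bullet}(B|_E)=0$ and $\nu_{\widetilde{Y}_\bullet}(f^*D)=(a,0,\dots,0)$. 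To identify $a$ with $d$, note that $Y_{n-1}\not\subseteq\Supp(D)$ gives $(f^*D)|_{Y_{n-1}'}=(f|_{Y_{n-1}'})^*(D|_{Y_{n-1}})$; since $f|_{Y_{n-1}'}\colon Y_{n-1}'\to Y_{n-1}$ is a proper birational morphism of curves and $Y_{n-1}$ is smooth at $x$, it is a local isomorphism near the point $x'=\widetilde{Y}_n=E\cap Y_{n-1}'$, which is the unique point of $Y_{n-1}'$ over $x$, so this divisor has order $d$ at $x'$. On the other hand, by $(\dagger)$ for $i=n$ the only component of $f^*D$ through $x'$ is $E$, which meets $Y_{n-1}'$ transversally at $x'$ (part of the admissibility of the induced flag, cf.\ Lemma~\ref{lem-indinf}), so this order equals $a$. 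Thus $a=d=x_n$, giving $\nu_{\widetilde{Y}_\bullet}(f^*D)=(x_n,x_1,\dots,x_{n-1})$, and the last assertion follows because this cyclic shift is a bijection of $\R^n$.

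The only genuine difficulty is $(\dagger)$, and I expect that to be the main obstacle. The argument runs the flag downward: writing $i=j+1$, the divisor $\widetilde{Y}_i=E\cap Y_j'$ is an $f|_{Y_j'}$--exceptional prime divisor on $Y_j'$ contracted to $x$, and the hypothesis that $f$ is a $Y_\bullet$--admissible log resolution of $(X,D)$ ensures that $(f^*D)|_{Y_j'}=(f|_{Y_j'})^*(D|_{Y_j})$ is a simple normal crossing divisor on $Y_j'$. If $\widetilde{Y}_i$ were contained in a component $F\ne E$ of $f^*D$, then $\widetilde{Y}_i\subseteq F|_{Y_j'}$, and this would contradict the transversality imposed by the simple normal crossing structure on $Y_j'$, together with the transversality of $E$ to the members of $Y_\bullet'$ coming from the simple normal crossing divisor $\widetilde{D}+Y_1'+\sum_j E_j$ on $\widetilde{X}$. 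Carrying out this contradiction requires a careful inductive bookkeeping of the nested simple normal crossing configurations on $\widetilde{X}$, on $Y_1'$, on $Y_2'$, and so on; everything else in the proof is formal.
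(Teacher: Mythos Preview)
Your approach is correct, but it differs substantially from the paper's. Your opening observation is the key divergence: since the hypothesis \emph{$Y_\bullet$--admissible log resolution of $(X,D)$} includes $Y_{n-1}\not\subseteq\Supp(D)$, and $Y_{n-1}\subseteq Y_i$ for all $i\le n-1$, one indeed has $\nu_{Y_\bullet}(D)=(0,\dots,0,d)$ and hence (via Lemma~\ref{lem-bir}) $\nu_{Y_\bullet'}(f^*D)=(0,\dots,0,d)$. So the lemma as stated is really the degenerate assertion $(0,\dots,0,d)\leftrightarrow(d,0,\dots,0)$, and you prove this directly by establishing $(\dagger)$ and the transversality $E\cdot Y_{n-1}'=1$ at $x'$. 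Your sketch of $(\dagger)$ is on the right track: working on $Y_{i-2}'$, the three divisors $E_{i-2}$, $Y_{i-1}'$, and any component $G$ of $F|_{Y_{i-2}'}$ lie in the SNC divisor there; ruling out $G=Y_{i-1}'$ uses $Y_{n-1}\not\subseteq\Supp(D)$, while $G=E_{i-2}$ forces $E_{i-2}\subseteq F$ and one recurses down to $Y_0'=\widetilde{X}$, where the triple SNC intersection $E\cap Y_1'\cap F$ finally gives the contradiction.

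The paper instead strengthens the statement to divisors of the form $f^*D+E$ with $E$ effective $f$--exceptional, for which the $x_i$ are genuinely arbitrary, and proves this by induction on $n$: the base case $n=2$ is a direct SNC computation, and the inductive step restricts everything to $Y_1'$, where the restriction again has the shape (pullback)${}+{}$(exceptional), so the strengthened hypothesis is available. What each buys: your route makes the actual content of the lemma transparent and avoids the auxiliary generalization, but the inductive bookkeeping for $(\dagger)$ is essentially the same descent through $Y_0'\supseteq Y_1'\supseteq\cdots$ that the paper's dimension induction performs; the paper's route is cleaner to write but proves more than is ever used. It is worth noting that, granted your observation $x_1=0$, the paper's induction would already go through without the added exceptional divisor, since then $(f^*D)|_{Y_1'}=(f|_{Y_1'})^*(D|_{Y_1})$ exactly.
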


\begin{proof}
Let $D_0:=f^*D+E$, and $D_i:=(D_{i-1}- x_i Y_i')|_{Y_i'}$ for $1 \leq i \leq n-1$.
Note that each $D_i+Y_{i+1}'$ has a simple normal crossing support on $Y_i'$ for $0 \leq i \leq n-1$. If $x_{i+1}>0$ for $0 \leq i \leq n-1$, then there is a unique irreducible component $E_{i+1}$ in $f^*D+E$ such that $Y_{i+1}' \subseteq E_{i+1}$ and $Y_i' \not\subseteq E_{i+1}$. In this case, we have $\mult_{E_{i+1}}(f^*D+E)=x_{i+1}$. If $x_{i+1}=0$, then put $E_{i+1}:=0$. We write
$$
f^*D+E=x_1E_1 + \cdots + x_nE_n + \big(f^*D+E-(x_1E_1 + \cdots + x_nE_n) \big).
$$
Then we have $\widetilde{Y}_n=Y_n' \not\subseteq \Supp\big(f^*D+E-(x_1E_1 + \cdots + x_nE_n) \big)$. Notice that if $x_n>0$, then $E_n = \widetilde{Y}_1$.
Now, it follows that $\nu_{\widetilde{Y}_\bullet}(f^*D+E)=(x_n, x_1, \cdots, x_{n-1})$.
\end{proof}

\begin{remark}
It is impossible to have an analogous statement of Lemma \ref{lem-bir} for induced infinitesimal admissible flags.
Let $S$ be a smooth projective surface with a very ample divisor $D$, and $f \colon \widetilde{S} \to S$ be the blow-up of $S$ at a point $x \in S$ with the exceptional divisor $E$.
Suppose that there is an irreducible curve $C$ on $S$ such that $\epsilon(D; x)=\frac{D \cdot C}{\mult_{x} C} < \sqrt{D^2}$ and $(f_*^{-1}C)^2<0$. We can choose smooth curves $Y_1, Y_1' \in |D|$ passing through $x$ such that $f_*^{-1}Y_1 \cap f_*^{-1}C \cap E= \emptyset$ and $f_*^{-1}Y_1' \cap f_*^{-1}C \cap E \neq \emptyset$. Consider admissible flags $Y_\bullet: S \supseteq Y_1 \supseteq \{x \}$ and  $Y_\bullet': S \supseteq Y_1' \supseteq \{x \}$ on $S$.
Note that $\okbd_{Y_\bullet}(D)=\okbd_{Y_\bullet'}(D)$ and $\okbd_{\widetilde{Y}_\bullet}(f^*D) = \okbd_{\widetilde{Y}_\bullet'}(f^*D)$ for proper admissible flags $\widetilde{Y}_\bullet, \widetilde{Y}_\bullet'$ on $\widetilde{S}$ induced by $Y_\bullet, Y_\bullet'$, respectively. However, we can check that $\okbd_{\widetilde{Y}_\bullet}(f^*D) \neq \okbd_{\widetilde{Y}_\bullet'}(f^*D)$ for infinitesimal admissible flags $\widetilde{Y}_\bullet, \widetilde{Y}_\bullet'$ on $\widetilde{S}$ induced by $Y_\bullet, Y_\bullet'$.

It is also impossible to determine the Okounkov body with respect to an induced infinitesimal admissible flag on a higher birational model $\widetilde{X}$ of $X$ by using the set of the Okounkov bodies with respect to admissible flags on $X$.
Let $S_1$ be a very general K3 surface of degree $6$ in $\P^4$ with a hyperplane section $D_1$, and $(S_2, D_2)$ be a very general polarized abelian surface of type $(1,3)$. For each $i=1,2$, fix a very general point $x_i \in S_i$, and take the blow-up $f_i \colon \widetilde{S}_i \to S_i$ of $S_i$ at $x_i$ with the exceptional divisor $E_i$. We define the following sets
\[
\begin{array}{l}
\okbd_i:=\{ \okbd_{Y_{\bullet}}(D_i) \mid Y_{\bullet} \text{ is an admissible flag on $S_i$ centered at $x_i$} \} \\
\okbd_i':=\{ \okbd_{Y_\bullet'}(f_i^*D_i) \mid Y_\bullet' \text{ is an induced proper flag on $\widetilde{S}_i$ centered at $x_i$} \} \\
\widetilde{\okbd}_i:=\{ \okbd_{E_\bullet}(f_i^* D_i) \mid E_\bullet \text{ is an induced infinitesimal admissible flag on $\widetilde{S}_i$ centered at $x_i$}\}.
\end{array}
\]
It is easy to see that $\okbd_1=\okbd_2$ and $\okbd_1'=\okbd_2'$ as sets.
However, $\epsilon(D_1; x_1)=2$ by \cite[Theorem 1.2]{GK} and $\epsilon(D_2; x_2)=\frac{12}{5}$ by \cite[Theorem 6.1]{B}, and it follows from  \cite[Theorem C]{AV-loc pos} that the size of the maximal inverted simplex contained in $\okbd_{E_\bullet}(f_i^*D_i)$ is $\epsilon(D_i;x_i)$ for any infinitesimal admissible flag $E_\bullet$ on $\widetilde{S}_i$ centered at $x_i$. Thus we see that $\widetilde{\okbd}_1 \cap \widetilde{\okbd}_2 = \emptyset$.
\end{remark}

\section{Proofs of main results}\label{proof_sec}

In this section, we prove Theorem \ref{main1} and Theorem \ref{main2}. We start by showing some lemmas that are the key ingredients of the proofs. For the lemmas, we use the following notations: $X$ is a smooth projective variety of dimension $n$, and $D$ is a big divisor on $X$ with the refined divisorial Zariski decomposition $D=P+N_x + N^c_x$ at a point $x \in X$.

First, we explain how to recover $N_x$ from the Okounkov bodies of $D$ with respect to admissible flags centered at $x$. We remark that Lemma \ref{lem-Nprop} is already observed in \cite[Proof of Theorem A]{Jow}.

\begin{lemma}\label{lem-Nprop}
For an irreducible component $E$ of $N_x$ such that $E$ is smooth at $x$, we have
$$
\mult_E N_x =  \inf_{Y_\bullet} \left\{ x_1 \left| (x_1, \cdots, x_n) \in \okbd_{Y_\bullet}(D)\right. \right\}
$$
where $\inf$ is taken over all admissible flags $Y_\bullet$ on $X$ centered at $x$ with $Y_1=E$.
\end{lemma}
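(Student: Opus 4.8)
The plan is to prove the two inequalities ``$\geq$'' and ``$\leq$'' separately; in fact I expect that for \emph{every} admissible flag $Y_\bullet$ centered at $x$ with $Y_1=E$ one already has $\inf\{x_1\mid (x_1,\ldots,x_n)\in\okbd_{Y_\bullet}(D)\}=\mult_E N_x$, so the infimum over flags is not really needed. The first step is the bookkeeping identity $\mult_E N_x=\mult_E N=\ord_E(||D||)$: since $E$ passes through $x$ and no component of $N_x^c$ does, we have $\mult_E N=\mult_E N_x$, and $\mult_E N=\ord_E(||D||)$ by the definition of the negative part of the divisorial Zariski decomposition. Note also that since $E$ is smooth at $x$, admissible flags $Y_\bullet$ centered at $x$ with $Y_1=E$ do exist, so the infimum is taken over a nonempty set.

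For ``$\geq$'', fix such a flag $Y_\bullet$. By Lemma \ref{lem-divzd}, $\okbd_{Y_\bullet}(D)=\okbd_{Y_\bullet}(P)+\okbd_{Y_\bullet}(N)$, and since $N$ is an effective divisor, $\okbd_{Y_\bullet}(N)$ is the single valuative point $\nu_{Y_\bullet}(N)$, whose first coordinate is $\ord_{Y_1}(N)=\mult_E N$. As $\okbd_{Y_\bullet}(P)\subseteq\R^n_{\geq 0}$, every point $(x_1,\ldots,x_n)\in\okbd_{Y_\bullet}(D)$ satisfies $x_1\geq\mult_E N=\mult_E N_x$. This gives ``$\geq$'' for each individual flag, hence for the infimum.

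For ``$\leq$'', I would use that $P$, being the positive part of a divisorial Zariski decomposition, is movable, so $\ord_E(||P||)=0$; moreover $P$ is big because $\bp(P)=\bp(D)\neq X$ by Proposition \ref{bp(P)=bp(D)}, so $\ord_E(||P||)$ has the infimum description $\inf\{\mult_E P'\mid P'\equiv P,\ P'\geq 0\}$. Thus, given $\epsilon>0$, there is an effective divisor $P'\equiv P$ with $\mult_E P'<\epsilon$, and then $D':=P'+N$ is an effective divisor with $D'\equiv D$ and $\mult_E D'<\mult_E N_x+\epsilon$. Since the Okounkov body of a big divisor depends only on its numerical class (\cite[Proposition 4.1]{lm-nobody}), the point $\nu_{Y_\bullet}(D')$ is a valuative point of $\okbd_{Y_\bullet}(D)=\okbd_{Y_\bullet}(D')$ whose first coordinate is $\ord_E(D')=\mult_E D'<\mult_E N_x+\epsilon$. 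Letting $\epsilon\to 0$ yields $\inf\{x_1\mid (x_1,\ldots,x_n)\in\okbd_{Y_\bullet}(D)\}\leq\mult_E N_x$ already for a single flag, completing the proof.

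The argument is short, and the only mildly delicate points — the ``main obstacle'', such as it is — are the routine verifications that $P$ is big (so that $\ord_E(||P||)$ admits the above infimum description and suitable effective representatives $P'$ exist) and that one may freely pass to a numerically equivalent effective divisor when reading off valuative points; these are handled by Proposition \ref{bp(P)=bp(D)} and the numerical invariance of Okounkov bodies of big divisors, respectively. Alternatively, one can shorten the ``$\leq$'' direction to a single line by invoking the known equality $\min\{x_1\mid (x_1,\ldots,x_n)\in\okbd_{Y_\bullet}(B)\}=\ord_{Y_1}(||B||)$ for a big divisor $B$, applied to $B=P$, together with Lemma \ref{lem-divzd}.
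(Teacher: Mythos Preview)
Your proof is correct and, in spirit, the same as the paper's. The paper's own proof is a one-liner: ``The right hand side is $\ord_E(\|D\|)$, and by definition $\ord_E(\|D\|)=\mult_E N_x$.'' This is precisely your ``alternative'' at the end, invoking the standard fact that for a big divisor the minimum first coordinate of $\okbd_{Y_\bullet}(D)$ equals $\ord_{Y_1}(\|D\|)$. Your main argument simply unpacks this fact from scratch via Lemma~\ref{lem-divzd} and the movability of $P$, which is a reasonable and self-contained way to present it. Your observation that the infimum over flags is unnecessary---the equality already holds for any single admissible flag with $Y_1=E$---is also correct and implicit in the paper's treatment.
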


\begin{proof}
The right hand side is $\ord_E(||D||)$, and by definition, $\ord_E(||D||)=\mult_E N_x$.
\end{proof}

\begin{lemma}\label{lem-Ninf}
Let $E$ be an irreducible component of $N_x$ such that $E$ is smooth at $x$, and $\Gamma$ be an effective divisor on $X$ with $E \not\subseteq \Supp(\Gamma)$. Then we have
$$
\mult_E N_x = \inf_{\widetilde{Y}_\bullet} \left\{ x_2 \left| (x_1,x_2, \cdots, x_n) \in \okbd_{\widetilde{Y}_\bullet}(f^*D)\right. \right\}
$$
where $\inf$ is taken over all infinitesimal admissible flags on $\widetilde{X}$ induced by admissible flags $Y_\bullet$ on $X$  centered at $x$ with $Y_1=E$ where $f \colon \widetilde{X} \to X$ is a $Y_\bullet$--admissible log resolution of $(X, \Gamma)$.
\end{lemma}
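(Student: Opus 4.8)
The plan is to prove the two inequalities $\mult_E N_x\le\mathrm{RHS}$ and $\mult_E N_x\ge\mathrm{RHS}$ separately, where $\mathrm{RHS}$ denotes the displayed infimum. The two inputs are Lemma \ref{lem-Nprop}, which (together with the fact that the infimum of a linear functional over a closed convex hull agrees with its infimum over the generating set) gives $\mult_E N_x=\inf\{\ord_E(D')\mid D'\in|D|_{\R}\}$, and Lemma \ref{lem-bir-indinf}, which converts valuative points for an induced proper flag into valuative points for an induced infinitesimal flag by cycling coordinates.

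For the inequality $\ge$, I would argue directly on the Okounkov body. Let $\widetilde Y_\bullet$ be any infinitesimal admissible flag on $\widetilde X$ induced by an admissible flag $Y_\bullet$ centered at $x$ with $Y_1=E$, via some $Y_\bullet$--admissible log resolution $f$ of $(X,\Gamma)$, and let $Y'_\bullet$ be the associated induced proper flag, so that $\widetilde Y_1$ is $f$--exceptional, $Y'_1=f_*^{-1}E$, and $\widetilde Y_2=\widetilde Y_1\cap Y'_1$. For any effective $\widetilde D\sim_{\R}f^*D$ with $\nu_{\widetilde Y_\bullet}(\widetilde D)=(x_1,\ldots,x_n)$, since $\widetilde Y_2$ occurs in $Y'_1|_{\widetilde Y_1}$ with positive coefficient and $\widetilde Y_1\ne Y'_1$, one gets $x_2=\ord_{\widetilde Y_2}\!\big((\widetilde D-x_1\widetilde Y_1)|_{\widetilde Y_1}\big)\ge\ord_{Y'_1}(\widetilde D-x_1\widetilde Y_1)=\ord_{Y'_1}(\widetilde D)$, which equals the coefficient of $E$ in $f_*\widetilde D$, i.e.\ $\ord_E(f_*\widetilde D)$. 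Since $f_*\widetilde D\in|D|_{\R}$, this is $\ge\ord_E(||D||)=\mult_E N_x$. As every point of $\okbd_{\widetilde Y_\bullet}(f^*D)$ lies in the closed convex hull of such valuative points and $x_2$ is a linear functional, the inequality $x_2\ge\mult_E N_x$ persists on all of $\okbd_{\widetilde Y_\bullet}(f^*D)$, hence $\mathrm{RHS}\ge\mult_E N_x$.

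For the inequality $\le$, fix $\eps>0$ and pick $D'_\eps\in|D|_{\R}$ with $\ord_E(D'_\eps)<\mult_E N_x+\eps$. Choose an admissible flag $Y_\bullet$ centered at $x$ with $Y_1=E$ and $Y_{n-1}\not\subseteq\Supp(\Gamma+D'_\eps)$, and take a log resolution $f\colon\widetilde X\to X$ of $(X,\Gamma+D'_\eps)$ that factors through the blow-up of $X$ at $x$ and through the successive blow-ups along the strict transforms of $Y_2,\ldots,Y_n$; by Example \ref{ex-y-admissible}(5) this $f$ is simultaneously a $Y_\bullet$--admissible log resolution of $(X,\Gamma)$ and of $(X,D'_\eps)$. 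By Lemma \ref{lem-indinf} there is a (unique) induced infinitesimal admissible flag $\widetilde Y_\bullet$ on $\widetilde X$; let $Y'_\bullet$ be the induced proper one. Applying Lemma \ref{lem-bir-indinf} to $D'_\eps$, if $\nu_{Y'_\bullet}(f^*D'_\eps)=(x_1,\ldots,x_n)$ then $\nu_{\widetilde Y_\bullet}(f^*D'_\eps)=(x_n,x_1,\ldots,x_{n-1})$, whose second coordinate is $x_1=\ord_{Y'_1}(f^*D'_\eps)=\ord_E(D'_\eps)<\mult_E N_x+\eps$. Since $f^*D'_\eps\sim_{\R}f^*D$, this exhibits a point of $\okbd_{\widetilde Y_\bullet}(f^*D)$ with second coordinate $<\mult_E N_x+\eps$, so $\mathrm{RHS}\le\mult_E N_x+\eps$; letting $\eps\to0$ gives $\mathrm{RHS}\le\mult_E N_x$.

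I expect the main obstacle to be the upper bound: one must produce a single birational morphism $f$ that is simultaneously a $Y_\bullet$--admissible log resolution of the fixed divisor $(X,\Gamma)$, a $Y_\bullet$--admissible log resolution of the chosen effective representative $D'_\eps$ of $D$, and a factorization through the blow-up of $X$ at $x$, so that Lemma \ref{lem-indinf} supplies an induced infinitesimal flag and Lemma \ref{lem-bir-indinf} applies to $D'_\eps$; once this is set up, the identification of the relevant coordinate with $\ord_E$ is immediate. The inequality $\ge$ is softer, but it genuinely needs the direct coordinate estimate above, because Lemma \ref{lem-bir-indinf} is unavailable for an arbitrary term of the infimum (its hypothesis that $f$ resolve the divisor being valued is not part of the data of an induced infinitesimal flag over $(X,\Gamma)$).
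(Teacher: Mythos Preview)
Your proposal is correct and follows essentially the same two-inequality strategy as the paper. The lower bound argument (your ``$\ge$'') is the same as the paper's: both restrict an arbitrary effective $f^*D_0$ to $\widetilde Y_1$, use that $\widetilde Y_2=\widetilde Y_1\cap f_*^{-1}E$, and bound $\nu_2$ below by the coefficient of $E$ in $D_0$, hence by $\ord_E(||D||)=\mult_E N_x$.

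For the upper bound there are two cosmetic differences worth noting. First, the paper chooses the test divisor as $P_0+N_x+N_x^c$ with $P_0\in|P|_{\R}$ satisfying $\mult_E P_0<\epsilon$ (via \cite[III.1.4 Lemma (5)]{nakayama}), whereas you pick $D'_\epsilon\in|D|_{\R}$ directly with $\ord_E(D'_\epsilon)<\mult_E N_x+\epsilon$; these are equivalent inputs. Second, the paper reads off $\nu_2$ by a direct SNC computation on the log resolution, while you invoke Lemma~\ref{lem-bir-indinf} to cycle coordinates from the induced proper flag to the induced infinitesimal one and then identify the second coordinate with $\ord_{Y'_1}(f^*D'_\epsilon)=\ord_E(D'_\epsilon)$. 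Your route is slightly more modular (it reuses a lemma rather than repeating the calculation), but the substance is the same, and your care in arranging $f$ to be simultaneously a $Y_\bullet$--admissible log resolution of $(X,\Gamma)$ and of $(X,D'_\epsilon)$, factoring through the blow-up at $x$, is exactly the point the paper also needs.
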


\begin{proof}
For simplicity, we denote by $\alpha$ the value on the right hand side in the lemma. We first show that $\mult_E N_x \leq \alpha$.
Let $\widetilde{Y}_\bullet$ be an infinitesimal admissible flag on $\widetilde{X}$ induced by an admissible flag $Y_\bullet$ on $X$ with $Y_1=E$ where $f \colon \widetilde{X} \to X$ is a birational morphism between smooth projective varieties.
Note that every effective divisor in $|f^*D|_{\mathbb R}$ has the form $f^*D_0 = f^{-1}_*D_0 + F$ for some $D_0 \in |D|_{\mathbb R}$ and for some $f$--exceptional effective divisor $F$.
Let $\nu_{\widetilde{Y}_\bullet}(f^*D_0)=(\nu_1, \cdots, \nu_n)$ be a valuative point of $\okbd_{\widetilde{Y}_\bullet}(f^*D)$. We have
$$
f^*D_0 - \nu_1 \widetilde{Y}_1 = f^{-1}_*D_0 + (F-\nu_1 \widetilde{Y}_1) = f^{-1}_*(D_0-N_x-N_x^c) + f^{-1}_*(N_x+N_x^c) + (F-\nu_1 \widetilde{Y}_1).
$$
The divisor $F-\nu_1 \widetilde{Y}_1$ is effective since $\widetilde{Y}_1$ is $f$--exceptional and is not a component of $f^{-1}_*D_0$. Clearly the divisors $f^{-1}_*(D_0-N_x-N_x^c), f^{-1}_*(N_x+N_x^c)$ are also effective. Thus $f^*D_0 - \nu_1 \widetilde{Y}_1\geq f^{-1}_* N_x$. Since $\widetilde{Y}_2 = \widetilde{Y}_1 \cap f_*^{-1} E$, it follows that
$$
\nu_2=\ord_{\widetilde{Y}_2}((f^*D_0 - \nu_1 \widetilde{Y}_1)|_{\widetilde{Y}_1}) \geq \ord_{\widetilde{Y}_2}(f^{-1}_*(N_x)|_{\widetilde{Y}_1}) \geq \mult_E N_x.
$$
This implies that $\mult_E N_x \leq \alpha$.

To show the equality $\mult_E N_x= \alpha$, let $\epsilon>0$ be any positive number.
By \cite[III. 1.4 Lemma (5)]{nakayama}, we can find some $P_0\in|P|_{\mathbb R}$ such that $0\leq \mult_{E}P_0< \epsilon$.
Now, fix an admissible flag $Y_\bullet$ on $X$ centered at $x$ with $Y_1=E$, and take a $Y_\bullet$--admissible log resolution $f \colon \widetilde{X} \to X$ of $(X,  P_0 + N_x + N_x^c)$ which factors through the blow up of $X$ at $x$.
 By Lemma \ref{lem-indinf}, there is an infinitesimal admissible flag $\widetilde{Y}_\bullet$ on $\widetilde{X}$ induced by $Y_\bullet$.
Let $\nu_{\widetilde{Y}_\bullet}(f^*(P_0+N_x + N_x^c)) = (\nu_1, \cdots, \nu_n)$ be a valuative point of $\okbd_{\widetilde{Y}_\bullet}(f^*D)$.
Since the effective divisor $D_0':=f^*(P_0+N_x + N_x^c) - \nu_1 \widetilde{Y}_1$ has a simple normal crossing support and $\widetilde{Y}_2 = \widetilde{Y}_1 \cap f_*^{-1}E$, it follows that
\begin{align*}
\nu_2&=\ord_{\widetilde{Y}_2}({D_0'}|_{\widetilde{Y}_1})=\mult_{f_*^{-1}E }(D_0')\\
&=\mult_{E} (P_0 + N_x + N_x^c)=\mult_{E}P_0 + \mult_E N_x < \epsilon+ \mult_E N_x.
\end{align*}
This implies that $\mult_E N_x \leq \alpha \leq \mult_E N_x + \epsilon$.
Since $\epsilon>0$ can be chosen arbitrarily small, the equality $\mult_E N_x= \alpha$ actually holds.
\end{proof}

Next, we explain how to recover the positive part $P$ from the Okounkov bodies of $D$ with respect to admissible flags centered at $x$. For this purpose, we recall the following basic lemma.

\begin{lemma}[{\cite[Lemma 3.5]{Jow}}]\label{curveclass}
Let $X$ be a smooth projective variety of dimension $n$ with $\rho=\dim N^1(X)_{\Q}$. If $Y \subseteq X$ is a transversal complete intersection of $n-2$ general very ample effective divisors and $H_1, \cdots, H_{\rho}$ are very ample effective divisors on $X$ whose numerical classes form a basis of $N^1(X)_{\Q}$, then the set of curve classes
$\{[C_i := Y \cap H_i]|\;i=1, \cdots, \rho\}$ forms a basis of $N_1(X)_{\Q}$.
\end{lemma}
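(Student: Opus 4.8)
The plan is to reduce the statement to the non-degeneracy of an intersection-theoretic symmetric bilinear form on $N^1(X)_\Q$, which is the higher-dimensional Hodge index theorem. Recall first that the intersection pairing $N^1(X)_\Q \times N_1(X)_\Q \to \Q$ is perfect, so $\dim N_1(X)_\Q = \dim N^1(X)_\Q = \rho$; hence it suffices to show that the $\rho$ classes $[C_1], \dots, [C_\rho]$ are linearly independent in $N_1(X)_\Q$. Let $(H_i^\vee)_{i=1}^\rho$ denote the basis of $N_1(X)_\Q$ dual to $(H_i)$. Since $[C_j] = [Y]\cdot[H_j]$ in $N_1(X)_\Q$ and $H_i\cdot C_j$ is exactly the $i$-th coordinate of $[C_j]$ in the basis $(H_i^\vee)$, the classes $[C_j]$ are independent if and only if the $\rho\times\rho$ matrix $M=(H_i\cdot C_j)$ is invertible.

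I would then compute $M$ directly. Writing $Y = A_1 \cap \cdots \cap A_{n-2}$ as the given transversal complete intersection of very ample divisors, one has $[Y]=[A_1]\cdots[A_{n-2}]$, so $[C_j]=[Y]\cdot[H_j]=A_1\cdots A_{n-2}\cdot H_j$ and therefore
\[
M_{ij} \;=\; H_i\cdot C_j \;=\; A_1\cdots A_{n-2}\cdot H_i\cdot H_j \;=\; q(H_i,H_j),
\]
where $q$ is the symmetric bilinear form on $N^1(X)_\R$ defined by $q(\alpha,\beta):=A_1\cdots A_{n-2}\cdot\alpha\cdot\beta$. Thus $M$ is the (symmetric) Gram matrix of $q$ with respect to the basis $(H_i)$, and $M$ is invertible exactly when $q$ is non-degenerate. (If one prefers, one may take all the $A_k$ to be general members of a single very ample linear system $|A|$; by Bertini they are smooth, meet transversally, and meet each $H_i$ properly, so $Y$ is a smooth surface and nothing changes numerically.)

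The one substantial point is the non-degeneracy of $q$. This is the Hodge index theorem: for ample classes $A_1,\dots,A_{n-2}$ on the smooth projective $n$-fold $X$, the form $q$ on $N^1(X)_\R$ has exactly one positive eigenvalue, in particular trivial kernel. A clean way to see it is to restrict to the smooth surface $S := A_1 \cap \cdots \cap A_{n-2}$: there $q(\alpha,\beta)=(\alpha|_S)\cdot(\beta|_S)$ is the surface intersection form pulled back along the restriction $N^1(X)_\R \to N^1(S)_\R$, which is injective by the Lefschetz hyperplane theorem; the image contains the ample class $A_1|_S$, which has positive self-intersection, and a Lorentzian (signature $(1,\ast)$) form restricts to a non-degenerate form on any subspace containing a positive vector. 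Granting this, $q$ is non-degenerate, $M$ is invertible, and $[C_1],\dots,[C_\rho]$ form a basis of $N_1(X)_\Q$.

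I expect the only real obstacle to be expository: making the Hodge index input precise for a product of (possibly distinct) ample classes and for the subspace $N^1(X)_\R \subseteq N^1(S)_\R$ — the subspace issue handled by the elementary Lorentzian fact just quoted, the classical surface Hodge index theorem supplying the signature on $N^1(S)_\R$ — and checking that the genericity of the $A_k$ and of their position relative to the fixed $H_i$, guaranteed by Bertini, makes all the intersection numbers above compute the indicated class products and keeps the $[C_i]$ well defined as curve classes. Everything else is linear algebra over the perfect pairing $N^1(X)_\Q \times N_1(X)_\Q \to \Q$.
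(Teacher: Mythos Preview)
The paper does not give its own proof of this lemma; it simply quotes it verbatim from \cite[Lemma 3.5]{Jow} and uses it as a black box. So there is nothing in the present paper to compare your argument against.

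Your argument is correct and is essentially the standard one (and, as far as I know, the one Jow gives): reduce to invertibility of the Gram matrix $M_{ij}=A_1\cdots A_{n-2}\cdot H_i\cdot H_j$, and then invoke the Hodge index theorem to see that the form $q(\alpha,\beta)=A_1\cdots A_{n-2}\cdot\alpha\cdot\beta$ on $N^1(X)_\R$ is non-degenerate. One small point worth making explicit: in your surface reduction, the Lorentzian argument shows only that $\alpha|_S=0$ in $N^1(S)_\R$, so you genuinely need the injectivity of $N^1(X)_\R\to N^1(S)_\R$ that you cite from Lefschetz; this follows because $H^2(X,\R)\to H^2(S,\R)$ is injective (iterated Lefschetz hyperplane, each intermediate variety having dimension $\geq 3$) and $N^1(-)_\R$ embeds in $H^2(-,\R)$. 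With that clarified, the proof goes through.
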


Note that we may allow all the curves $C_i$ in Lemma \ref{curveclass} to be smooth projective curves and pass through a given point $x \in X$. Suppose that we can read off the intersection numbers $P \cdot C_i$ from the Okounkov bodies of a divisor $P$. Then, by Lemma \ref{curveclass}, we can determine the numerical equivalence class of $P$. It is a crucial step in the proofs of Theorem \ref{main1} and Theorem \ref{main2} to recover the intersection numbers $P \cdot C_i$ from the Okounkov bodies of $P$.

The following can be regarded as a stronger version of Jow's result \cite[Corollary 3.3]{Jow}.

\begin{lemma}\label{lem-P.C2}
Let $Y_\bullet$ be an admissible flag on $X$ such that $Y_{n-1} \not\subseteq \bp(P)$ and $Y_{n-1} \cap \bm(P)=\emptyset$. Then we have
$$
P\cdot Y_{n-1}=\vol_{\R^1}(\okbd_{Y_\bullet}(P) \cap \text{$x_n$\text{--}$\axis$})=\vol_{\R^1}(\okbd_{Y_\bullet}(P)_{x_1=\cdots=x_{n-1}=0}).
$$
\end{lemma}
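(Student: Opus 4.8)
The plan is to reduce the identity to a restricted–volume computation and then to an intersection number. First, the two right–hand sides coincide by definition, since $\okbd_{Y_\bullet}(P)_{x_1=\cdots=x_{n-1}=0}=\okbd_{Y_\bullet}(P)\cap(\{0\}^{n-1}\times\R_{\ge 0})$ is exactly the intersection of $\okbd_{Y_\bullet}(P)$ with the $x_n$--axis. Next, $P$ is big: since $D$ is big, Proposition~\ref{bp(P)=bp(D)} gives $\bp(P)=\bp(D)\neq X$. Hence, as $Y_{n-1}\not\subseteq\bp(P)$, Theorem~\ref{slice} applied to the big divisor $P$ with $k=1$ yields
$$
\okbd_{Y_\bullet}(P)_{x_1=\cdots=x_{n-1}=0}=\okbd_{Y_{n-1\bullet}}(P),
$$
the Okounkov body of $P$ on the curve $Y_{n-1}$ with respect to the partial flag $Y_{n-1\bullet}\colon Y_{n-1}\supseteq Y_n=\{x\}$. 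Since $P$ is big, \cite[(2.7) in p.804]{lm-nobody} gives $\vol_{\R^1}\big(\okbd_{Y_{n-1\bullet}}(P)\big)=\vol_{X|Y_{n-1}}(P)$. It therefore suffices to prove $\vol_{X|Y_{n-1}}(P)=P\cdot Y_{n-1}$.

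Set $C:=Y_{n-1}$; thus $C$ is a curve with $C\not\subseteq\bp(P)$ and $C\cap\bm(P)=\emptyset$. The inequality ``$\le$'' is formal: one has $P\cdot C\ge 0$ because $C\not\subseteq\bm(P)$ (restrict $P+tA$ to $C$ for an ample $A$ and small $t>0$, and let $t\to 0$), and then $h^0(X|C,mP)\le h^0(C,mP|_C)\le m(P\cdot C)+1$ for $m\gg 0$ by Riemann--Roch on the curve $C$, so $\vol_{X|C}(P)\le P\cdot C$. For ``$\ge$'' I would perturb: fix an ample divisor $A$ on $X$ and $\epsilon>0$. Then
$$
\bp(P+\epsilon A)\subseteq\SB\Big(P+\tfrac{\epsilon}{2}A\Big)\subseteq\bm(P),
$$
so $C\cap\bp(P+\epsilon A)=\emptyset$; that is, the augmented base locus of the big divisor $L:=P+\epsilon A$ is disjoint from $C$.

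For such a divisor one has $\vol_{X|C}(L)=L\cdot C$, which is essentially \cite[Corollary~3.3]{Jow}: choose a birational morphism $\pi\colon\widetilde X\to X$ from a smooth projective variety that is an isomorphism over the ample locus $X\setminus\bp(L)$ and admits a decomposition $\pi^*L\sim_{\R}A'+E'$ with $A'$ ample, $E'\ge 0$ and $\Supp(E')\subseteq\pi^{-1}(\bp(L))$; then the strict transform $C'$ of $C$ is mapped isomorphically to $C$ and avoids $\Supp(E')\cup\Exc(\pi)$, so $\pi^*L|_{C'}=A'|_{C'}$ is ample of degree $L\cdot C$, and $\vol_{X|C}(L)=\vol_{\widetilde X|C'}(\pi^*L)=A'\cdot C'=L\cdot C$ by the birational invariance of the restricted volume and Serre vanishing on $\widetilde X$. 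Since $C\not\subseteq\bp(P)$, both $P$ and $P+\epsilon A$ lie in $\text{Big}^C(X)$, so letting $\epsilon\to 0$ and using the continuity of $\vol_{X|C}$ on $\text{Big}^C(X)$ gives $\vol_{X|C}(P)=P\cdot C$, completing the proof. The step I expect to be the main obstacle is this identity $\vol_{X|C}(L)=L\cdot C$ for a big divisor $L$ with $C\cap\bp(L)=\emptyset$: the bound ``$\le$'' is automatic, but ``$\ge$'' requires controlling the image of the restriction maps $H^0(X,mL)\to H^0(C,mL|_C)$, which is what the reduction to the ample case on a birational model (or a direct appeal to \cite[Corollary~3.3]{Jow}) accomplishes; the remaining ingredients — $\bp(P+\epsilon A)\subseteq\SB(P+\tfrac{\epsilon}{2}A)\subseteq\bm(P)$, the birational invariance and continuity of $\vol_{X|C}$, and the membership $P,P+\epsilon A\in\text{Big}^C(X)$ — are standard and follow from \cite{elmnp-asymptotic inv of base}, \cite{elmnp-restricted vol and base loci}.
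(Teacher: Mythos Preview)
Your proof is correct and follows essentially the same strategy as the paper: reduce via Theorem~\ref{slice} and \cite[(2.7)]{lm-nobody} to $\vol_{X|Y_{n-1}}(P)=P\cdot Y_{n-1}$, perturb by a small ample so that the relevant base locus is disjoint from $Y_{n-1}$, compute the restricted volume of the perturbation as an intersection number, and let $\epsilon\to 0$ using the continuity of $\vol_{X|Y_{n-1}}$ on $\text{Big}^{Y_{n-1}}(X)$. The only cosmetic differences are that the paper cites \cite[Theorem~B]{elmnp-restricted vol and base loci} directly for $\vol_{X|Y_{n-1}}(P+\epsilon A)=(P+\epsilon A)\cdot Y_{n-1}$ (using $\SB(P+\epsilon A)\cap Y_{n-1}=\emptyset$), whereas you use the slightly stronger containment $\bp(P+\epsilon A)\subseteq\bm(P)$ and route through \cite[Corollary~3.3]{Jow} with a Fujita-approximation sketch, and you add a separate $\le$ argument that is not needed.
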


\begin{proof}
Since we have $\vol_{\R^1}(\okbd_{Y_\bullet}(P)_{x_1=\cdots=x_{n-1}=0}) = \vol_{X|Y_{n-1}}(P)$ by Theorem \ref{slice} and \cite[(2.7) in p.804]{lm-nobody}, it is sufficient to check that
$$
\vol_{X|Y_{n-1}}(P)=P\cdot Y_{n-1}.
$$
Let $A$ be an ample divisor on $X$. For any sufficiently small real number $\epsilon > 0$, we have $\SB(P + \epsilon A) \cap Y_{n-1} = \emptyset$ and $Y_{n-1} \not\subseteq \bp(P+\epsilon A)$. By \cite[Theorem B]{elmnp-restricted vol and base loci}, we see that $ \vol_{X|Y_{n-1}}(P+\epsilon A) = (P+\epsilon A) \cdot Y_{n-1}$. Thus we obtain
$$
\lim_{\epsilon \to 0+} \vol_{X|Y_{n-1}}(P+\epsilon A) = \lim_{\epsilon \to 0+} (P+\epsilon A) \cdot Y_{n-1} = P \cdot Y_{n-1}.
$$
On the other hand, since $\vol_{X|Y_{n-1}} \colon \text{Big}^{Y_{n-1}} (X) \to \R$ is a continuous function by \cite[Theorem 5.2]{elmnp-restricted vol and base loci}, it follows that
$$
\lim_{\epsilon \to 0+} \vol_{X|Y_{n-1}}(P+\epsilon A) = \vol_{X|Y_{n-1}}(P).
$$
Therefore, $\vol_{X|Y_{n-1}}(P)=P\cdot Y_{n-1}$ as desired.
\end{proof}

\begin{lemma}\label{lem-P.Cinf}
Let $\Gamma$ be an effective divisor on $X$, and $Y_\bullet$ be an admissible flag on $X$ centered at a point $x\in X$ such that $Y_{n-1} \not\subseteq \bp(P), Y_{n-1} \cap \bm(P) = \emptyset,$ and $Y_{n-1} \not\subseteq \Supp(\Gamma)$. Then we have
$$
P \cdot Y_{n-1} = \sup_{\widetilde{Y}_\bullet} \vol_{\R^1}(\okbd_{\widetilde{Y}_\bullet}(f^*P) \cap \text{$x_1$--$\axis$}) = \sup_{\widetilde{Y}_\bullet} \left\{ a_1 \left| (a_1,0, \cdots, 0) \in \okbd_{\widetilde{Y}_\bullet}(f^*P)\right. \right\}
$$
where both $\sup$s are taken over all infinitesimal admissible flags on $\widetilde{X}$ induced by the admissible flag $Y_\bullet$ where $f \colon \widetilde{X} \to X$ is a $Y_\bullet$--admissible log resolution of $(X, \Gamma)$.
\end{lemma}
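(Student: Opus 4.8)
\emph{Plan.} The proof will consist of the two inequalities
\[
\sup_{\widetilde{Y}_\bullet}\Big\{\,a_1 \;\Big|\; (a_1,0,\ldots,0)\in\okbd_{\widetilde{Y}_\bullet}(f^*P)\,\Big\}\le P\cdot Y_{n-1},
\qquad
\sup_{\widetilde{Y}_\bullet}\vol_{\R^1}\!\big(\okbd_{\widetilde{Y}_\bullet}(f^*P)\cap\text{$x_1$-axis}\big)\ge P\cdot Y_{n-1},
\]
the suprema ranging over the flags allowed in the statement. For a single flag the set $\okbd_{\widetilde{Y}_\bullet}(f^*P)\cap\text{$x_1$-axis}$ is a segment lying in $[0,a_1^{\max}]\times\{0\}^{n-1}$, hence of length at most $a_1^{\max}$; thus the middle quantity in the asserted chain of equalities is dominated by the left-hand quantity above, and the two displayed inequalities then pin all three expressions to $P\cdot Y_{n-1}$. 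Note the hypotheses already force $P\cdot Y_{n-1}=\vol_{X|Y_{n-1}}(P)>0$ by Lemma~\ref{lem-P.C2} and $Y_{n-1}\not\subseteq\bp(P)$.

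\emph{Lower bound.} By Lemma~\ref{lem-P.C2}, Theorem~\ref{slice} and \cite[(2.7) in p.804]{lm-nobody}, $\okbd_{Y_{n-1\bullet}}(P)$ is a segment of length $P\cdot Y_{n-1}$. Choosing sections of multiples of $P$ whose restrictions to $Y_{n-1}$ are nonzero and whose $\tfrac1m\ord_x$-values lie within $\epsilon$ of the two endpoints of $\okbd_{Y_{n-1\bullet}}(P)$, one gets effective divisors $D_0,D_1\in|P|_\R$ with $Y_{n-1}\not\subseteq\Supp(D_i)$; since $Y_{n-1}\subseteq Y_j$ for all $j$, this forces $\nu_{Y_\bullet}(D_i)=(0,\ldots,0,\beta_i)$ with $\beta_0-\beta_1>P\cdot Y_{n-1}-2\epsilon$. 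Next take $f\colon\widetilde{X}\to X$ to be a log resolution of $(X,\Gamma+D_0+D_1)$ factoring simultaneously through $\Bl_xX$ and through the tower of blow-ups of Example~\ref{ex-y-admissible}(5) (such an $f$ exists after passing to a common resolution); then $f$ is a $Y_\bullet$--admissible log resolution of $(X,\Gamma)$ and, by Lemma~\ref{lem-indinf}, carries the induced infinitesimal flag $\widetilde{Y}_\bullet$. By Lemma~\ref{lem-bir} the exceptional corrections vanish because the first $n-1$ coordinates of $\nu_{Y_\bullet}(D_i)$ are zero, so $\nu_{Y_\bullet'}(f^*D_i)=(0,\ldots,0,\beta_i)$ for the induced proper flag $Y_\bullet'$, and Lemma~\ref{lem-bir-indinf} then yields $\nu_{\widetilde{Y}_\bullet}(f^*D_i)=(\beta_i,0,\ldots,0)$. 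These are valuative points of $\okbd_{\widetilde{Y}_\bullet}(f^*P)$, so $[\beta_1,\beta_0]\times\{0\}^{n-1}\subseteq\okbd_{\widetilde{Y}_\bullet}(f^*P)\cap\text{$x_1$-axis}$ by convexity, giving $\vol_{\R^1}(\cdots)>P\cdot Y_{n-1}-2\epsilon$; letting $\epsilon\to0$ proves the inequality.

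\emph{Upper bound.} Fix any induced infinitesimal flag $\widetilde{Y}_\bullet$ as in the statement, with $\widetilde{Y}_1=E$ the $f$--exceptional prime divisor over $x$ and $\widetilde{Y}_n=E\cap Y_{n-1}'\subseteq f^{-1}_*Y_{n-1}$. First let $(a_1,0,\ldots,0)=\nu_{\widetilde{Y}_\bullet}(f^*D_0)$ be a valuative point of $\okbd_{\widetilde{Y}_\bullet}(f^*P)$, $D_0\in|P|_\R$. Since $f$ factors through $\Bl_xX$ and $E$ is the strict transform of its exceptional divisor, $a_1=\ord_E(f^*D_0)=\mult_x(D_0)$. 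If a component of $D_0$ containing the curve $Y_{n-1}$ had positive coefficient $\rho$, its strict transform would meet $E$ in an effective divisor through $\widetilde{Y}_n$; restricting $f^*D_0-a_1E$ successively along $E\supseteq\widetilde{Y}_2\supseteq\cdots\supseteq\widetilde{Y}_{n-1}$ and using $\widetilde{Y}_n\subseteq\widetilde{Y}_j$ for all $j$, one obtains $\nu_n\ge\rho>0$, contradicting $\nu_n=0$. Hence $Y_{n-1}\not\subseteq\Supp(D_0)$, and therefore $\mult_x(D_0)\le(D_0\cdot Y_{n-1})_x\le D_0\cdot Y_{n-1}=P\cdot Y_{n-1}$ by the local intersection inequality and $D_0\equiv P$. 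To pass to the whole segment $\okbd_{\widetilde{Y}_\bullet}(f^*P)\cap\text{$x_1$-axis}$, I would use the Lazarsfeld--Musta\c{t}\u{a} slicing: for $c$ in the range of the first coordinate, $\okbd_{\widetilde{Y}_\bullet}(f^*P)\cap\{x_1=c\}$ is the Okounkov body on $E$ of a graded linear subseries of $f^*P|_E$, and $(c,0,\ldots,0)\in\okbd_{\widetilde{Y}_\bullet}(f^*P)$ precisely when the origin lies in that body, which is witnessed by valuative points tending to $0$; each of these arises from a divisor $D\in|P|_\R$ with $\mult_x(D)\ge c$ whose ``$Y_{n-1}$-part'' has coefficient tending to $0$ (by the same $\nu_n\ge\rho$ estimate applied to the residual divisor on $E$), so $c\le(D\cdot Y_{n-1})_x+o(1)\le P\cdot Y_{n-1}+o(1)$ and hence $c\le P\cdot Y_{n-1}$.

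\emph{Main obstacle.} The delicate point is exactly this last step: handling the points of $\okbd_{\widetilde{Y}_\bullet}(f^*P)\cap\text{$x_1$-axis}$ that are only limits of convex combinations of valuative points (which need not lie on the axis), equivalently, effective divisors in $|P|_\R$ whose support does contain $Y_{n-1}$. The resolution is the quantitative estimate that the contribution along $E$ of the $Y_{n-1}$-part of such a divisor is bounded by the last coordinate of the associated valuative point, so it is negligible in the limit; combined with the numerical identity $D\cdot Y_{n-1}=P\cdot Y_{n-1}$ and the inequality $\mult_x(D')\le(D'\cdot Y_{n-1})_x$ valid whenever $Y_{n-1}\not\subseteq\Supp(D')$, this forces the desired bound for every flag.
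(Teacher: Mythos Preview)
Your lower bound is essentially correct and close to the paper's argument, though the paper is cleaner: since $\widetilde{Y}_n\notin\bm(f^*P)$ (because $Y_{n-1}\cap\bm(P)=\emptyset$), \cite[Theorem~A]{CHPW-asyba} already places the origin in $\okbd_{\widetilde{Y}_\bullet}(f^*P)$, so a single valuative point $(b,0,\ldots,0)$ with $b$ close to $P\cdot Y_{n-1}$ suffices. The paper obtains that point by a direct SNC computation rather than via Lemma~\ref{lem-bir-indinf}, but the content is the same.

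Your upper bound, however, contains a genuine error. You assert that $\widetilde{Y}_1=E$ is the strict transform of the exceptional divisor of $\Bl_xX$, and hence that $a_1=\ord_E(f^*D_0)=\mult_x(D_0)$. This is false in general. The uniqueness in Lemma~\ref{lem-indinf} characterizes $\widetilde{Y}_1$ as the unique $f$--exceptional prime divisor meeting the strict transform $Y_{n-1}'$ of the curve $Y_{n-1}$; when $f$ involves further blow-ups at infinitely near points lying on the strict transform of $Y_{n-1}$, this is a \emph{higher} exceptional divisor, and the associated divisorial valuation is not $\mult_x$. Already on a surface: if $f=\pi_2\circ\pi_1$ with $\pi_1=\Bl_xX$ (exceptional curve $E_1$) and $\pi_2$ the blow-up at the point $E_1\cap(\pi_1)_*^{-1}Y_1$ (exceptional curve $E_2$), then the strict transform of $Y_1$ meets only $E_2$, so the induced infinitesimal flag has $\widetilde{Y}_1=E_2$; but $\ord_{E_2}(f^*Y_1)=2\neq 1=\mult_x(Y_1)$. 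Your chain $a_1=\mult_x(D_0)\le(D_0\cdot Y_{n-1})_x\le P\cdot Y_{n-1}$ therefore breaks at the first equality, and the sketch for non-valuative boundary points (your ``main obstacle'') inherits the same defect.

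The paper's upper bound takes a different route that avoids any identification of $\ord_{\widetilde{Y}_1}$ with $\mult_x$. Arguing by contradiction, one picks a valuative point $\nu_{\widetilde{Y}_\bullet}(f^*P_0)=(\nu_1',\ldots,\nu_n')$ with $\nu_1'>P\cdot Y_{n-1}$ and $\nu_2',\ldots,\nu_n'$ arbitrarily small, and compares it to the proper-flag valuation $\nu_{Y_\bullet'}(f^*P_0)=(\nu_1'',\ldots,\nu_n'')$. The two key inequalities are $\nu_i''\le\nu_{i+1}'$ for $1\le i\le n-1$ (immediate from $\widetilde{Y}_{i+1}=\widetilde{Y}_1\cap Y_i'$) and $\nu_1'=\mult_{\widetilde{Y}_1}(f^*P_0)\le\mult_{x'}(f^*P_0)\le\nu_1''+\cdots+\nu_n''$. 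Combined with Lemma~\ref{lem-bir} and Lemma~\ref{lem-P.C2} (which controls $\nu_n$ via $P\cdot Y_{n-1}$), these force a contradiction. This comparison-of-flags argument works uniformly over all induced infinitesimal flags in the supremum and sidesteps the fact that $\ord_{\widetilde{Y}_1}$ can be strictly larger than $\mult_x$.
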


\begin{proof}
We first claim that the second equality holds:
$$
\sup_{\widetilde{Y}_\bullet} \vol_{\R^1}(\okbd_{\widetilde{Y}_\bullet}(f^*P) \cap \text{$x_1$--$\axis$}) = \sup_{\widetilde{Y}_\bullet} \left\{ a_1 \left| (a_1,0, \cdots, 0) \in \okbd_{\widetilde{Y}_\bullet}(f^*P)\right. \right\}.
$$
We will denote this value by $\beta$.
Note that $\widetilde{Y}_n \not\subseteq \bm(f^*P)$.  By \cite[Theorem A]{CHPW-asyba}, the origin of $\R_{\geq 0}^n$ is contained in $\okbd_{\widetilde{Y}_\bullet}(f^*P)$ for any infinitesimal admissible flag $\widetilde{Y}_\bullet$ over $X$ centered at $x$. Thus $(a_1, 0, \cdots, 0) \in \okbd_{\widetilde{Y}_\bullet}(f^*P)$ if and only if $a_1 \leq  \vol_{\R^1}(\okbd_{\widetilde{Y}_\bullet}(f^*P) \cap \text{$x_1$--$\axis$})$, so the claimed equality holds. It only remains to prove that
$$
P \cdot Y_{n-1} = \beta.
$$

First, we show that $P \cdot Y_{n-1} \leq \beta$. By Theorem \ref{slice} and Lemma \ref{lem-P.C2}, we have
$$
\vol_{\R^1}(\okbd_{Y_\bullet}(P) \cap \text{$x_n$--$\axis$}) = \vol_{\R^1}( \okbd_{Y_{n-1 \bullet}}(P)  )=P \cdot Y_{n-1}.
$$
For any positive number $\epsilon > 0$, there is some $P_0 \in |P|_{\R}$ such that
$$
P \cdot Y_{n-1} - \epsilon = \vol_{\R^1}( \okbd_{Y_{n-1 \bullet}}(P)  ) - \epsilon <\nu_{Y_{n-1 \bullet}}(P_0)=:b .
$$
Note that $\nu_{Y_\bullet}(P_0)=(0, \cdots, 0, b) \in \okbd_{Y_\bullet}(P)$. Now, take a $Y_\bullet$--admissible log resolution $f \colon \widetilde{X} \to X$ of $(X, \Gamma + P_0 )$ which factors through the blow up of $X$ at $x$.
Let $Y_\bullet'$ be a proper admissible flag on $\widetilde{X}$ induced by $Y_\bullet$. By Lemma \ref{lem-indinf}, there is  an infinitesimal admissible flag $\widetilde{Y}_\bullet$ on $\widetilde{X}$ induced by $Y_\bullet$. Note that each $f|_{Y_i'} \colon Y_i' \to Y_i$ is also a log resolution of $(Y_i, \Gamma|_{Y_i} + P_0|_{Y_i}+ Y_{i+1})$ for $0 \leq i \leq n-2$.
We see that the only irreducible component of $f^*P_0$ containing $\widetilde{Y}_n$ is $\widetilde{Y}_1$. Now, $f|_{Y_{n-1}'} \colon Y_{n-1}' \to Y_{n-1}$ is an isomorphism over a neighborhood of $Y_n$, so we obtain $b=\ord_x P_0|_{Y_{n-1}} = \ord_{\widetilde{Y}_n} f^*P_0|_{Y_{n-1}'}$. Since $f^*P_0$ has a simple normal crossing support and $f^*P_0$ meets $Y_{n-1}'$ transversally, it follows that $\mult_{\widetilde{Y}_1} f^*P_0 = b$. This implies that $\nu_{\widetilde{Y}_\bullet}(f^*P_0) = (b, 0, \cdots, 0) \in \okbd_{\widetilde{Y}_\bullet}(f^*P)$. Thus we have
$$
P \cdot Y_{n-1} - \epsilon < b \leq \beta.
$$
Since $\epsilon > 0$ can be taken arbitrarily small, this implies that $P \cdot Y_{n-1} \leq \beta$.

To derive a contradiction, suppose that $P \cdot Y_{n-1} < \beta$. There is a $Y_\bullet$--admissible log resolution $f \colon \widetilde{X} \to X$ of $(X, \Gamma)$ and an infinitesimal admissible flag $\widetilde{Y}_\bullet$ on $\widetilde{X}$ induced by the admissible flag $Y_\bullet$ on $X$  such that
$$
P \cdot Y_{n-1} < \vol_{\R^1}(\okbd_{\widetilde{Y}_\bullet}(f^*P) \cap \text{$x_1$--$\axis$}).
$$
Fix an ample divisor $A$ on $X$ and a sufficiently small number $\epsilon > 0$ such that
$$
\vol_{\R^1}(\okbd_{\widetilde{Y}_\bullet}(f^*P) \cap \text{$x_1$--$\axis$}) - (P + \epsilon A) \cdot Y_{n-1} \gg 2\epsilon .
$$
We can take a number $k$ with
$$
(P+\epsilon A) \cdot Y_{n-1}  < k  <\vol_{\R^1}(\okbd_{\widetilde{Y}_\bullet}(f^*P) \cap \text{$x_1$--$\axis$})~~\text{ and }~~k - (P+\epsilon A) \cdot Y_{n-1} > 2\epsilon.
$$
Since $Y_{n-1} \not\subseteq \bp(P+\epsilon A)$ and $Y_{n-1} \cap \bm(P+\epsilon A) = \emptyset$, it follows from Lemma \ref{lem-P.C2} that
\begin{equation}\tag{$\#$}\label{eq-pf}
 \vol_{\R^1}(\okbd_{Y_\bullet}(P+\epsilon A) \cap x_n\text{--}\axis) = (P+\epsilon A) \cdot Y_{n-1} < k  - 2\epsilon.
\end{equation}
Now, choose an effective $f$--exceptional divisor $E$ such that $f^*\epsilon A - E$ is ample.
Notice that the divisor $f^*\epsilon A - \delta E = (1-\delta)f^*\epsilon A + \delta(f^* \epsilon A - E)$ is ample for any number $\delta$ with $0<\delta \leq 1$.
Since the origin of $\R^n_{\geq 0}$ is contained in
$$
\okbd_{\widetilde{Y}_\bullet}(f^*P-k\widetilde{Y}_1) = \okbd_{\widetilde{Y}_\bullet}(f^*P)_{x_1 \geq k} + (-k, \underbrace{0, \cdots, 0}_{n-1}),
$$
it follows from \cite[Theorem A]{CHPW-asyba} that $\widetilde{Y}_n \not\subseteq \bm(f^*P-k\widetilde{Y}_1)$. Then for each $0 < \delta \leq 1$, we have
$$
\widetilde{Y}_n \not\subseteq \text{SB}(f^*P-k\widetilde{Y}_1 + f^*\epsilon A - \delta E),
$$
so there is some $P_{\delta} \in |P+\epsilon A|_{\R}$ such that
$$
f^*P-k\widetilde{Y}_1 + f^*\epsilon A - \delta E \sim_{\R}f^*P_\delta - k\widetilde{Y}_1 - \delta E \geq 0~~\text{ and }~~\widetilde{Y}_n \not\subseteq \Supp(f^*P_\delta - k\widetilde{Y}_1 - \delta E).
$$
We write $f^*P_\delta = (f^*P_\delta - k\widetilde{Y}_1 - \delta E) + k\widetilde{Y}_1 + \delta E$. Let $\nu_{\widetilde{Y}_\bullet}(E) = (\nu_1, \cdots, \nu_n)$.
Then we have
$$
\nu_{\widetilde{Y}_\bullet}(f^*P_\delta) = \nu_{\widetilde{Y}_\bullet}(k\widetilde{Y}_1+\delta E) = (k+\delta \nu_1, \delta \nu_2, \cdots , \delta \nu_n).
$$
Let $Y_\bullet'$ be the proper admissible flag on $\widetilde{X}$ induced by $Y_\bullet$. By Lemma \ref{lem-bir-indinf}, we have
$$
\nu_{Y_\bullet'}(f^*P_\delta) = \nu_{Y_\bullet'}(k\widetilde{Y}_1 + \delta E) = (\delta \nu_2, \cdots, \delta \nu_n, k+\delta \nu_1).
$$
Let $\nu_{Y_\bullet}(P_\delta)=(\nu_1'(\delta), \cdots, \nu_n'(\delta)) \in \okbd_{Y_\bullet}(P+\epsilon A)$. In view of Lemma \ref{lem-bir}, we can take the nonnegative numbers $\nu_1'(\delta), \cdots, \nu_{n-1}'(\delta), (k+\delta \nu_1)-\nu_n'(\delta)$ arbitrarily small by taking $\delta$ sufficiently small. Thus if $\delta$ is sufficiently small, then we may assume that
$$
(k+\delta \nu_1) - \nu_n'(\delta) < \epsilon~\text{ and }~ \nu_n'(\delta) < \vol_{\R^1}(\okbd_{Y_\bullet}(P+\epsilon A) \cap x_n\text{--}\axis)  + \epsilon.
$$
Then by (\ref{eq-pf}), we have
$$
k- \epsilon < \nu_n'(\delta) < (P+\epsilon A) \cdot Y_{n-1}+\epsilon < k- \epsilon,
$$
which is a contradiction. Hence $P \cdot Y_{n-1} = \beta$, and we finish the proof.
\end{proof}

Using Lemmas \ref{lem-P.C2} and \ref{lem-P.Cinf}, we can determine the numerical equivalence class of $P$.

\begin{lemma}\label{lem-Pprop}
The numerical equivalence class of $P$ is determined by the set
$$
\{ \left(\okbd_{Y_\bullet}(D), Y_\bullet \right) \mid \text{$Y_\bullet$ is an admissible flag on $X$ centered at $x$} \}.
$$
\end{lemma}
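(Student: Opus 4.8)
The plan is to recover the numerical class $[P]\in N^1(X)_{\R}$ directly from the given data, without first trying to reconstruct $[D]$ or $[N]$. Since a class in $N^1(X)_{\R}$ is pinned down by its intersection numbers against any basis of $N_1(X)_{\Q}$, and since by Lemma~\ref{curveclass} (and the remark following it) such a basis is realized by smooth curves through $x$ that are transversal complete intersections of general very ample divisors, it suffices to show that for every curve $C$ through $x$ which is a complete intersection of general very ample divisors, the number $P\cdot C$ is determined by the set $\{(\okbd_{Y_\bullet}(D),Y_\bullet)\}$.

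Next I would fix such a $C$ and choose an admissible flag $Y_\bullet$ centered at $x$ with $Y_{n-1}=C$ and with $Y_1,\dots,Y_{n-2}$ general complete intersections of very ample divisors containing $C$. Writing $D=P+N$ for the divisorial Zariski decomposition, generality of the choices (Bertini) gives $Y_j\not\subseteq\Supp(N)$ for all $j$, and $C=Y_{n-1}\not\subseteq\bp(D)=\bp(P)$ by Proposition~\ref{bp(P)=bp(D)}. Running the construction of $\nu_{Y_\bullet}$ on the fixed effective divisor $N$ then yields $\nu_1(N)=\cdots=\nu_{n-1}(N)=0$, so $\okbd_{Y_\bullet}(N)$ is a single point on the $x_n$-axis; hence by Lemma~\ref{lem-divzd} the body $\okbd_{Y_\bullet}(D)=\okbd_{Y_\bullet}(P)+\okbd_{Y_\bullet}(N)$ is just a translate of $\okbd_{Y_\bullet}(P)$ along the $x_n$-axis. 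In particular the two $x_n$-axis slices have the same one-dimensional volume, and combining Theorem~\ref{slice} with \cite[(2.7) in p.804]{lm-nobody} the common value is
$$
\vol_{\R^1}\bigl(\okbd_{Y_\bullet}(D)_{x_1=\cdots=x_{n-1}=0}\bigr)=\vol_{\R^1}\bigl(\okbd_{Y_\bullet}(P)_{x_1=\cdots=x_{n-1}=0}\bigr)=\vol_{X|C}(P).
$$
The leftmost quantity is visibly extracted from the data, so $\vol_{X|C}(P)$ is determined by the data.

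It then remains to pass from $\vol_{X|C}(P)$ to $P\cdot C$. When $x\notin\bm(P)$ — which always holds on surfaces, where a movable divisor is nef, and is why this point never arises in \cite{Roe} — a general complete-intersection curve $C$ through $x$ satisfies $C\cap\bm(P)=\emptyset$, because $\bm(P)$ has codimension $\ge 2$ ($P$ being movable); Lemma~\ref{lem-P.C2} then gives $\vol_{X|C}(P)=P\cdot C$, and letting $[C]$ range over the spanning family finishes the proof via Lemma~\ref{curveclass}.

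The main obstacle is the remaining case $x\in\bm(P)$, where $C\cap\bm(P)=\{x\}$ and Lemma~\ref{lem-P.C2} no longer applies. Here the expected identity is $\vol_{X|C}(P)=P\cdot C-\ord_x(\|P\|)$, with the correction being the \emph{same} numerical invariant for every general $C$ through $x$. Granting this, if $D'=P'+N'$ has the same Okounkov-body data as $D$, then $\vol_{X|C}(P)=\vol_{X|C}(P')$ for all such $C$, so $(P-P')\cdot C=\ord_x(\|P\|)-\ord_x(\|P'\|)=:\delta$ is independent of $C$; applying this both to $C$ and to a complete-intersection curve through $x$ whose class is $2[C]$ forces $2\delta=\delta$, hence $\delta=0$, so $P-P'$ is numerically trivial against a spanning set of curve classes and $P\equiv P'$. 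Thus the technical heart I would have to supply is the restricted-volume computation $\vol_{X|C}(P)=P\cdot C-\ord_x(\|P\|)$ for a general complete-intersection curve through a point of $\bm(P)$, which I would obtain by bounding the order of the base locus of $|mP|$ along $C$ at $x$ (equal to $m\,\ord_x(\|P\|)+o(m)$ for general $C$) together with asymptotic surjectivity of the corresponding restriction maps, or by passing to a small modification on which $P$ becomes nef; all the other ingredients are Lemmas~\ref{lem-Nprop}, \ref{lem-divzd}, \ref{lem-P.C2}, \ref{curveclass}, Theorem~\ref{slice}, and Proposition~\ref{bp(P)=bp(D)}.
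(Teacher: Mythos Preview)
Your treatment of the case $x\notin\bm(P)$ is correct and essentially the paper's argument: choose a flag of general complete intersections through $x$, observe that $\okbd_{Y_\bullet}(D)$ is a translate of $\okbd_{Y_\bullet}(P)$, read off $P\cdot Y_{n-1}$ from the $x_n$-slice via Lemma~\ref{lem-P.C2}, and conclude by Lemma~\ref{curveclass}. (The paper pins down the translation differently, using that $\okbd_{Y_\bullet}(P)$ contains the origin by \cite[Theorem~A]{CHPW-asyba}, but your observation that $\nu_{Y_\bullet}(N)$ lies on the $x_n$-axis for general $Y_\bullet$ is an equally valid route to the same slice identity.)

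The case $x\in\bm(P)$, however, has a genuine gap. You reduce everything to the identity $\vol_{X|C}(P)=P\cdot C-\ord_x(\|P\|)$ for a general complete-intersection curve $C$ through $x$, and you acknowledge this as the technical heart still to be supplied. Neither of your two sketches closes the gap. The suggestion of ``passing to a small modification on which $P$ becomes nef'' does not work in dimension $\ge 3$: $P$ is merely movable, and movable divisors need not admit a birational model on which they become nef (this is precisely the failure of Zariski decomposition in higher dimension). The alternative sketch, controlling the base order of $|mP|$ along $C$ at $x$, is delicate because $\bm(P)$ may well contain positive-dimensional components through $x$; one must argue that for a \emph{fixed} general $C$ the asymptotic order at $x$ of the restricted series $W_\bullet(P|C)$ equals $\ord_x(\|P\|)$, and this is not immediate from the definitions. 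Even granting the formula, your doubling trick with $[C]$ and $2[C]$ is a nice idea, but it rests entirely on that unproven identity.

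The paper sidesteps this computation completely. When $x\in\bm(P)$ it blows up $x$ via $\pi\colon\Bl_xX\to X$ with exceptional divisor $E$, takes the divisorial Zariski decomposition $\pi^*P=\widetilde P+\widetilde N$, and uses the key fact that $E\not\subseteq\bm(\widetilde P)$ to choose a point $\widetilde x\in E\setminus\bm(\widetilde P)$. Now the already-established case applies on $\Bl_xX$: the numerical class of $\widetilde P$ is determined by Okounkov bodies $\okbd_{Y_\bullet}(\pi^*D)$ for flags $Y_\bullet$ of general complete intersections on $\Bl_xX$ centered at $\widetilde x$. One then chooses these flags so that $\pi(Y_\bullet)$ is an admissible flag on $X$ centered at $x$; by Lemma~\ref{lem-bir} the body $\okbd_{Y_\bullet}(\pi^*D)$ is determined by $\okbd_{\pi(Y_\bullet)}(D)$, which lies in the given data. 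Hence $[\widetilde P]$, and therefore $[P]=[\pi_*\widetilde P]$, is determined. This reduction-by-blowup is both shorter and avoids any restricted-volume computation at a point of $\bm(P)$.
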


\begin{proof}
It is equivalent to proving that if $\okbd_{Y_\bullet}(D)=\okbd_{Y_\bullet}(D')$ for all admissible flags $Y_\bullet$ on $X$ centered at $x$, then $P \equiv P'$, where $D=P+N, D'=P'+N'$ are the divisorial Zariski decompositions. Let $\pi \colon \text{Bl}_x X \to X$ be the blow-up of $X$ at $x$ with the exceptional divisor $E$, and $\pi^*D=\widetilde{P}+\widetilde{N}, \pi^*D'=\widetilde{P}'+\widetilde{N}'$ be the divisorial Zariski decompositions. Since $E \not\subseteq\bm(\widetilde{P})\cup \bm(\widetilde{P}')$, we can choose a point $x' \in E \setminus \big( \bm(\widetilde{P}) \cup \bm(\widetilde{P}') \big)$.
We can take an admissible flag $Y_\bullet'$ on $X$ centered at $x'$ such that each $Y_i'$ is a smooth projective variety given by a transversal complete intersection of $i$ very general very ample effective divisors on $\text{Bl}_x X$ for $1 \leq i \leq n-1$. We may assume that $Y_{n-1}' \not\subseteq \bp(\widetilde{P}) \cup \bp(\widetilde{P}'), Y_{n-1}' \cap \big(\bm(\widetilde{P}) \cup \bm(\widetilde{P}') \big) = \emptyset$, and $Y_\bullet'$ is a proper admissible flag induced by an admissible flag $Y_\bullet$ on $X$ centered at $x$.
By Lemma \ref{lem-divzd}, $\okbd_{Y_\bullet'}(\widetilde{P})$ and $\okbd_{Y_\bullet'}(\widetilde{P}')$ are translations of  $\okbd_{Y_\bullet'}(\pi^*D)$ and $\okbd_{Y_\bullet'}(\pi^*D')$ in $\R_{\geq 0}^n$, respectively.
But Lemma \ref{lem-bir} says that $\okbd_{Y_\bullet'}(\pi^*D) = \okbd_{Y_\bullet'}(\pi^*D')$, and \cite[Theorem A]{CHPW-asyba} says that the origin of $\R^n_{\geq 0}$ is contained in both $\okbd_{Y_\bullet'}(\widetilde{P})$ and $\okbd_{Y_\bullet'}(\widetilde{P}')$. Thus $\okbd_{Y_\bullet'}(\widetilde{P}) = \okbd_{Y_\bullet'}(\widetilde{P}')$, so Lemma \ref{lem-P.C2} implies that
$\widetilde{P} \cdot Y_{n-1}' = \widetilde{P}' \cdot Y_{n-1}'$.
By applying Lemma \ref{curveclass} with varying $Y_\bullet'$, we see that $\widetilde{P} \equiv \widetilde{P}'$. Consequently, we obtain $P=\pi_*\widetilde{P} \equiv \pi_*\widetilde{P}' = P'$.
This completes the proof.
\end{proof}

\begin{lemma}\label{lem-Pinf}
Let $\Gamma$ be an effective divisor on $X$.
The numerical equivalence class of $P$ is determined by the set
\[
\left\{ \left(\okbd_{\widetilde{Y}_\bullet}(f^*D), \widetilde{Y}_\bullet \right) \left|
\begin{array}{l} \text{$\widetilde{Y}_\bullet$ is an infinitesimal admissible flag on $\widetilde{X}$ induced by an admissible flag}  \\
\text{$Y_\bullet$ on $X$ centered at $x$ such that $f \colon \widetilde{X} \to X$ is a $Y_\bullet$--admissible log} \\
\text{resolution of $(X, \Gamma)$}
\end{array} \right. \right\}.
 \]
\end{lemma}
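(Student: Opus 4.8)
The plan is to follow the proof of Lemma~\ref{lem-Pprop} almost verbatim, replacing the input Lemma~\ref{lem-P.C2} by its infinitesimal counterpart Lemma~\ref{lem-P.Cinf}. So I would split into the cases $x\notin\bm(P)$ and $x\in\bm(P)$.

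In the first case I would fix a basis $[C_1],\dots,[C_\rho]$ of $N_1(X)_\Q$ as in Lemma~\ref{curveclass}, with each $C_i=Y\cap H_i$ a transversal complete intersection of very general very ample divisors passing through $x$. For each $i$ pick an admissible flag $Y_\bullet^{(i)}$ on $X$ centered at $x$ with terms the corresponding partial complete intersections and $Y_{n-1}^{(i)}=C_i$; being very general we may assume $Y_{n-1}^{(i)}\not\subseteq\bp(P)$, $Y_{n-1}^{(i)}\cap\bm(P)=\emptyset$ and $Y_{n-1}^{(i)}\not\subseteq\Supp(\Gamma)$. By Example~\ref{ex-y-admissible} and Lemma~\ref{lem-indinf} there are $Y_\bullet^{(i)}$--admissible log resolutions $f\colon\widetilde X\to X$ of $(X,\Gamma)$ factoring through the blow-up at $x$, each carrying an induced infinitesimal admissible flag $\widetilde Y_\bullet$, and all the pairs $(\okbd_{\widetilde Y_\bullet}(f^*D),\widetilde Y_\bullet)$ so obtained lie in the set of the lemma. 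The key observation is that, by Lemma~\ref{lem-divzd} applied to $f^*D$ (whose negative part dominates $f^*N$), $\okbd_{\widetilde Y_\bullet}(f^*D)$ is the translate of $\okbd_{\widetilde Y_\bullet}(f^*P)$ by the single valuative point $\nu_{\widetilde Y_\bullet}(f^*N)\in\R^n_{\ge 0}$; since $\widetilde Y_n\not\subseteq\bm(f^*P)$ (as in the proof of Lemma~\ref{lem-P.Cinf}), the body $\okbd_{\widetilde Y_\bullet}(f^*P)$ contains the origin by \cite[Theorem~A]{CHPW-asyba}, and therefore it is the unique translate of $\okbd_{\widetilde Y_\bullet}(f^*D)$ lying in $\R^n_{\ge 0}$ and containing the origin --- namely $\okbd_{\widetilde Y_\bullet}(f^*D)$ shifted by minus its vector of coordinatewise minima. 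Hence the set of the lemma determines $\okbd_{\widetilde Y_\bullet}(f^*P)$ for every such $\widetilde Y_\bullet$, and then Lemma~\ref{lem-P.Cinf} gives $P\cdot C_i=\sup_{\widetilde Y_\bullet}\{\,a_1\mid(a_1,0,\dots,0)\in\okbd_{\widetilde Y_\bullet}(f^*P)\,\}$, so all the numbers $P\cdot C_i$, and hence the numerical class of $P$, are determined.

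In the case $x\in\bm(P)$ I would proceed as in Lemma~\ref{lem-Pprop}: let $\pi\colon\Bl_xX\to X$ be the blow-up at $x$ with exceptional divisor $E$, write $\pi^*P=\widetilde P+\widetilde N$ for the divisorial Zariski decomposition, so that $\pi^*D=\widetilde P+(\widetilde N+\pi^*N_x+\pi^*N_x^c)$ is the divisorial Zariski decomposition of the big divisor $\pi^*D$. As $\widetilde P$ is movable, $E\not\subseteq\bm(\widetilde P)$, so I may choose $\widetilde x\in E\setminus\bm(\widetilde P)$ and apply the previous case on $\Bl_xX$ to $\pi^*D$, the point $\widetilde x$, and the effective divisor $\Gamma':=\pi^*\Gamma+E$, which recovers the numerical class of $\widetilde P$ from the pairs $(\okbd_{\widetilde Y_\bullet}(g^*\pi^*D),\widetilde Y_\bullet)$ for the relevant induced infinitesimal flags $\widetilde Y_\bullet$ over flags $Z_\bullet$ on $\Bl_xX$ centered at $\widetilde x$, with $g$ a $Z_\bullet$--admissible log resolution of $(\Bl_xX,\Gamma')$. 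Exactly as in Lemma~\ref{lem-Pprop}, the very general complete intersection flags $Z_\bullet$ may be chosen so that $\pi(Z_\bullet)=:Y_\bullet$ is an admissible flag on $X$, hence $Z_\bullet$ is induced proper over $X$; one then checks that $f:=\pi\circ g$ is a $Y_\bullet$--admissible log resolution of $(X,\Gamma)$, that the proper flag on $\widetilde X$ induced by $Z_\bullet$ over $\Bl_xX$ equals the one induced by $Y_\bullet$ over $X$, and that $\widetilde Y_1$ is $f$--exceptional over $x$ --- so $\widetilde Y_\bullet$ is an induced infinitesimal admissible flag over $X$ induced by $Y_\bullet$. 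Since $g^*\pi^*D=f^*D$, all the pairs required in the previous paragraph already belong to the set of the lemma, so this set determines the numerical class of $\widetilde P$, and hence of $P=\pi_*\widetilde P$.

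I expect the real work here to be bookkeeping rather than any new phenomenon: the two delicate points are (i) isolating $\okbd_{\widetilde Y_\bullet}(f^*P)$ inside the known body $\okbd_{\widetilde Y_\bullet}(f^*D)$ by pinning down the translation using that the origin belongs to $\okbd_{\widetilde Y_\bullet}(f^*P)$, and (ii) in the case $x\in\bm(P)$, verifying that the induced infinitesimal flags and the admissible log resolutions arising on $\Bl_xX$ are again of the shape prescribed by the lemma over $X$, so that the reduction to $\Bl_xX$ stays inside the given collection of Okounkov bodies.
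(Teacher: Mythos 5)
Your proof is correct and follows essentially the same route as the paper's: both cases ($x\notin\bm(P)$ and $x\in\bm(P)$) are handled exactly as in Lemma~\ref{lem-Pprop}, with Lemma~\ref{lem-P.Cinf} substituted for Lemma~\ref{lem-P.C2}, the translation pinned down via Lemma~\ref{lem-divzd} and the origin criterion from \cite[Theorem~A]{CHPW-asyba}, and the reduction to $\Bl_xX$ using $\Gamma'=\pi^*\Gamma+E$. The only difference is that you spell out the translation-uniqueness step and the compatibility checks in step (ii) slightly more explicitly than the paper does.
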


\begin{proof}
It is equivalent to proving that if $\okbd_{\widetilde{Y}_\bullet}(f^*D)=\okbd_{\widetilde{Y}_\bullet}(f^*D')$ for all infinitesimal admissible flags $\widetilde{Y}_\bullet$ described in the set in the lemma, then $P \equiv P'$, where $D=P+N, D'=P'+N'$ are the divisorial Zariski decompositions. Let $\pi \colon \text{Bl}_x X \to X$ be the blow-up of $X$ at $x$ with the exceptional divisor $E$, and $\pi^*D=\widetilde{P}+\widetilde{N}, \pi^*D'=\widetilde{P}'+\widetilde{N}'$ be the divisorial Zariski decompositions. As in the proof of Lemma \ref{lem-Pprop}, we consider a proper admissible flag $Y_\bullet'$ on $\text{Bl}_x X$ centered at a point $x' \in E \setminus \big( \bm(\widetilde{P}) \cup \bm(\widetilde{P}') \big)$ induced by an admissible flag $Y_\bullet$ on $X$ such that each $Y_i'$ is a smooth projective variety given by a transversal complete intersection of $i$ very general very ample effective divisors on $\text{Bl}_x X$ for $1 \leq i \leq n-1$ and $Y_{n-1}' \not\subseteq \bp(\widetilde{P}) \cup \bp(\widetilde{P}'), Y_{n-1}' \cap \big(\bm(\widetilde{P}) \cup \bm(\widetilde{P}') \big) = \emptyset$. We can further assume that $Y_{n-1}' \not\subseteq \Supp(\pi^*\Gamma)$.
Let $\widetilde{Y}_\bullet$ be an infinitesimal admissible flag on $\widetilde{X}$ induced by $Y_\bullet'$, where $f' \colon \widetilde{X} \to \text{Bl}_x X$ is a $Y_\bullet'$--admissible log resolution of $(\text{Bl}_x X, \pi^*\Gamma+E)$.
Note that $f:=\pi \circ f'$ is a $Y_\bullet$--admissible log resolution of $(X, \Gamma)$, and $\widetilde{Y}_\bullet$ is an infinitesimal admissible flag induced by $Y_\bullet$.
Since $\okbd_{\widetilde{Y}_\bullet}(f^*D)=\okbd_{\widetilde{Y}_\bullet}(f^*D')$
and $\widetilde{Y}_n \not\subseteq \bm(f'^*\widetilde{P}) \cup \bm(f'^*\widetilde{P}')$, it follows that
$\okbd_{\widetilde{Y}_\bullet}(f'^*\widetilde{P}) = \okbd_{\widetilde{Y}_\bullet}(f'^*\widetilde{P}')$ by Lemma  \ref{lem-divzd} and \cite[Theorem A]{CHPW-asyba}.
By considering all possible $\widetilde{Y}_\bullet$, we can derive $\widetilde{P} \cdot Y_{n-1}' = \widetilde{P}' \cdot Y_{n-1}'$ from Lemma \ref{lem-P.Cinf}. By applying Lemma \ref{curveclass} with varying $Y_\bullet'$ , we see that $\widetilde{P} \equiv \widetilde{P}'$. Hence $P=\pi_*\widetilde{P} \equiv \pi_*\widetilde{P}' = P'$, so we complete the proof.
\end{proof}

We are ready to give the proof of Theorem \ref{main1}.

\begin{proof}[Proof of Theorem \ref{main1}]
\noindent $(1)\Rightarrow(2)$:
Let us assume that $D \equiv_x D'$.
Let $\widetilde{Y}_{\bullet}$ be an admissible flag on a smooth projective variety $\widetilde{X}$ centered at $x$ where $f \colon \widetilde{X}\to X$ is a birational morphism, and $\widetilde{Y}_n=\{x'\}$ for some $x'\in f^{-1}(x)$. Denote by $f^*D=P+N_{x'}+N^c_{x'}$ and $f^*D'=P'+N'_{x'}+N'^c_{x'}$ the refined divisorial Zariski decompositions at $x'$. Then, by Proposition \ref{prop_localnumequiv}, we have $P\equiv P'$ and $N_{x'}=N'_{x'}$, so we obtain
$$
\okbd_{\widetilde{Y}_\bullet}(P)  = \okbd_{\widetilde{Y}_\bullet}(P')~~ \text{ and } ~~\okbd_{\widetilde{Y}_\bullet}(N_{x'}) = \okbd_{\widetilde{Y}_\bullet}(N'_{x'}).
$$
Since each of $\okbd_{\widetilde{Y}_\bullet}(N_{x'}+N^c_{x'}), \okbd_{\widetilde{Y}_\bullet}(N'_{x'}+N'^c_{x'})$ consists of a single valuative point in $\R^n_{\geq 0}$ and $x' \not\in \Supp(N^c_{x'})\cup\Supp(N'^c_{x'})$, it follows that
$$
 \okbd_{\widetilde{Y}_\bullet}(N_{x'}+N^c_{x'}) =  \okbd_{\widetilde{Y}_\bullet}(N_{x'}) = \okbd_{\widetilde{Y}_\bullet}(N'_{x'}) = \okbd_{\widetilde{Y}_\bullet}(N'_{x'}+N'^c_{x'}).
$$
By Lemma \ref{lem-divzd}, we have
$$
\okbd_{\widetilde{Y}_\bullet}(f^*D)=\okbd_{\widetilde{Y}_\bullet}(P) + \okbd_{\widetilde{Y}_\bullet}(N_{x'}+N^c_{x'}) = \okbd_{\widetilde{Y}_\bullet}(P') + \okbd_{\widetilde{Y}_\bullet}(N'_{x'}+N'^c_{x'}) =\okbd_{\widetilde{Y}_\bullet}(f^*D').
$$
Thus $(2)$ holds.

\smallskip

\noindent $(2)\Rightarrow(3)$ and $(2)\Rightarrow(4)$: It is obvious.

\smallskip

\noindent $(3)\Rightarrow(1)$ and $(4) \Rightarrow (1)$: Let $D=P+N_x + N_x^c, D'=P'+N_x' + {N_x'}^c$ be the refined divisorial Zariski decompositions of big divisors $D, D'$ at a point $x$.
Recall the conditions $(3)$ and $(4)$:
\begin{enumerate}[wide, labelindent=3pt]
\item [$(3)$] $\okbd_{\widetilde{Y}_{\bullet}}(f^*D)=\okbd_{\widetilde{Y}_{\bullet}}(f^*D')$ for every proper admissible flag $\widetilde{Y}_{\bullet}$ over $X$ centered at $x$ defined on a smooth projective variety $\widetilde{X}$ with a birational morphism $f \colon \widetilde{X} \to X$.
\item [$(4)$] $\okbd_{\widetilde{Y}_{\bullet}}(f^*D)=\okbd_{\widetilde{Y}_{\bullet}}(f^*D')$ for every infinitesimal admissible flag $\widetilde{Y}_{\bullet}$ over $X$ centered at $x$ defined on a smooth projective variety $\widetilde{X}$ with a birational morphism $f \colon \widetilde{X} \to X$.
\end{enumerate}
Under the condition (3) or (4), we want to show that $P \equiv P'$ and $N_x = N_x'$.

We first show that $N_x = N_x'$. Let $E$ be an irreducible component of $N_x$. It is sufficient to show the following claim
$$
\mult_E N_x = \mult_E N_x'.
$$
If $E$ is smooth at $x$, then the claim follows from Lemma \ref{lem-Nprop} under the condition (3) or Lemma \ref{lem-Ninf} under the condition (4).
We now assume that $E$ is singular at $x$. Take a log resolution $f \colon \widetilde{X} \to X$ of $(X, E)$ so that the strict transform $f_*^{-1}E$ is smooth. There exists a point $x'$ in $f_*^{-1}E$ with $f(x')=\{ x \}$. Let $f^*D = \widetilde{P} + \widetilde{N}_{x'} + \widetilde{N}_{x'}^c$ be the refined divisorial Zariski decomposition of $D$ at $x'$. Note that $\mult_E N_x = \mult_{f_*^{-1}E} \widetilde{N}_{x'}$. By Lemma \ref{lem-Nprop}, $\mult_{f_*^{-1}E} \widetilde{N}_{x'}$ is determined by the Okounkov bodies of $f^*D$ with respect to admissible flags $\widetilde{Y}_\bullet$ on $\widetilde{X}$, which is proper over $X$, centered at $x'$ with $\widetilde{Y}_1=f_*^{-1}E$. Thus, under the condition (3), this implies the claim.
For the infinitesimal case, we note that every infinitesimal admissible flag on $\widetilde{X}$ centered at $x'$ is an infinitesimal admissible flag over $X$ centered at $x$. Under the condition (4), by applying Lemma \ref{lem-Ninf}, we also see that the claim holds. Therefore, $N_x = N_x'$.

Now, Lemma \ref{lem-Pprop} under the condition (3) or Lemma \ref{lem-Pinf} under the condition (4) immediately implies $P\equiv P'$. This completes the proof of Theorem \ref{main1}.
\end{proof}

We now turn to the proof of Theorem \ref{main2} in Introduction. Let $D$ be a pseudoeffective divisor on a smooth projective variety $X$, and $D=P+N_x + N_x^c$ be the refined divisorial Zariski decomposition at a point $x \in X$. We can further decompose $N_x$ as
$N_x = N_x^{sm} + N_x^{sing}$
where every irreducible component of $N_x^{sm}$ (respectively, $N_x^{sing}$) is smooth (respectively, singular) at $x$. Then we have a decomposition of a pseudoeffective divisor $D$ as
\begin{equation}\tag{$\star$}\label{sharp2}
D = P + N_x^{sm} +N_x^{sing}+ N_x^c.
\end{equation}

\begin{theorem}[$=$Theorem \ref{main2}]\label{thm:main2intext}
Let $D, D'$ be big divisors on a smooth projective variety $X$.
For a fixed point $x\in X$, consider the decompositions as in \emph{(\ref{sharp2})}
$$
D=P+N_x^{sm} + N_x^{sing} + N_x^c,\;\; D'=P'+{N'}_x^{sm} + {N'}_x^{sing} + {N'}_x^c.
$$
Then the following are equivalent:
\begin{enumerate}[wide, labelindent=3pt]
\item[$(1)$] $P \equiv P', N_x^{sm}={N'}_x^{sm}, \okbd_{Y_\bullet}(N_x^{sing}) = \okbd_{Y_\bullet}({N'}_x^{sing})$ for every admissible flag $Y_\bullet$ centered at $x$.
\item[$(2)$] $\okbd_{Y_\bullet}(D)=\okbd_{Y_\bullet}(D')$ for every admissible flag $Y_\bullet$ on $X$ centered at $x$.
\item[$(3)$] $\okbd_{\widetilde{Y}_\bullet}(f^*D)=\okbd_{\widetilde{Y}_\bullet}(f^*D')$ for every induced proper admissible flag $\widetilde{Y}_\bullet$ over $X$ centered at $x$ defined on a smooth projective variety $\widetilde{X}$ with a birational morphism $f \colon \widetilde{X} \to X$.
\item[$(4)$] $\okbd_{\widetilde{Y}_\bullet}(f^*D)=\okbd_{\widetilde{Y}_\bullet}(f^*D')$ for every infinitesimal admissible flag $\widetilde{Y}_\bullet$ on $\widetilde{X}$ induced by an admissible flag $Y_\bullet$ on $X$ centered at $x$ where $f \colon \widetilde{X} \to X$ is a $Y_\bullet$--admissible log resolution of $(X, N_x^{sing} + {N'}_x^{sing})$.
\end{enumerate}
\end{theorem}

\begin{proof}
\noindent $(1) \Rightarrow (2)$: Note that (1) implies that
$$
\okbd_{Y_\bullet}(P)=\okbd_{Y_\bullet}(P'), \okbd_{Y_\bullet}(N_x^{sm}) = \okbd_{Y_\bullet}({N'}_x^{sm}), \okbd_{Y_\bullet}(N_x^{sing}) = \okbd_{Y_\bullet}({N'}_x^{sing})
$$
for any admissible flag $Y_\bullet$ on $X$. Then (2) follows from Lemma \ref{lem-divzd}.

\smallskip

\noindent $(2) \Rightarrow (1)$: Assume the condition (2) holds. By Lemma \ref{lem-Nprop}, we can recover $N_x^{sm}$ from the Okounkov bodies of $D$ with respect to admissible flags on $X$ centered at $x$, so we have $N_x^{sm}={N'}_x^{sm}$.
Lemma \ref{lem-Pprop} implies that $P \equiv P'$. It then follows from Lemma \ref{lem-divzd} that
$\okbd_{Y_\bullet}(N_x^{sing})= \okbd_{Y_\bullet}({N'}_x^{sing})$
for any admissible flag $Y_\bullet$ on $X$ centered at $x$. Thus (1) holds.

\smallskip

\noindent $(2) \Leftrightarrow (3)$: It follows from Lemma \ref{lem-bir}.

\smallskip

\noindent $(1) \Rightarrow (4)$: By Proposition \ref{prop_localnumequiv} and Lemmas \ref{lem-divzd},  \ref{lem-bir}, and \ref{lem-bir-indinf}, this implication follows (cf. Proof of Theorem \ref{main1} $(1)\Rightarrow(2)$).

\smallskip

\noindent $(4) \Rightarrow (1)$: By Lemmas \ref{lem-Ninf} and \ref{lem-Pinf}, we have $N_x^{sm} = N_x^{sm}$ and $P \equiv P'$. Now, by Lemmas \ref{lem-divzd}, \ref{lem-bir}, and \ref{lem-bir-indinf}, we obtain $\okbd_{Y_\bullet}(N_x^{sing})=\okbd_{Y_\bullet}({N'}_x^{sing})$ for every admissible flag $Y_\bullet$ on $X$ centered at $x$. Thus (1) holds. Therefore, we complete the proof.
\end{proof}

\begin{example}\label{ex-main2}
Let $D, D'$ be big divisors on a smooth projective surface $S$ with the Zariski decompositions $D=P+N, D'=P'+N'$, and $f \colon \widetilde{S} \to S$ be the blow-up of $S$ at a point $x \in S$ with the exceptional divisor $E$. Suppose that $N, N'$ are irreducible curves that are singular at $x$, and the strict transforms $f_*^{-1}N, f_*^{-1}N'$ are smooth but meet $E$ at the two points $p, q$ satisfying
$$
\ord_p (f_*^{-1}N|_E)=2, \ord_q (f_*^{-1}N|_E)=3 ~~\text{and}~~ \ord_p (f_*^{-1}N'|_E)=3, \ord_q (f_*^{-1}N'|_E)=2.
$$
Then it is easy to check that
$$
\text{$\okbd_{Y_\bullet}(N) = \okbd_{Y_\bullet}(N')$ for every admissible flag $Y_\bullet$ on $S$ centered at $x$}
$$
even though $N \neq N'$.
Thus we see that the condition (1) in Theorem \ref{main2} does not necessarily imply that $N_x^{sing}={N'}_x^{sing}$.
Moreover, for an induced infinitesimal admissible flag  $\widetilde{Y}_\bullet: \widetilde{S} \supseteq E \supseteq \{ p \}$ over $S$, we have
$$
\okbd_{\widetilde{Y}_\bullet}(f^*N) \neq \okbd_{\widetilde{Y}_\bullet}(f^*N).
$$
This shows that the conditions (1), (2), (3) in Theorem \ref{main2} do not imply $\okbd_{\widetilde{Y}_\bullet}(f^*D)=\okbd_{\widetilde{Y}_\bullet}(f^*D')$ for every induced infinitesimal admissible flag $\widetilde{Y}_\bullet$ over $X$ centered at $x$ defined on a smooth projective variety $\widetilde{X}$ with a birational morphism $f \colon \widetilde{X} \to X$.
\end{example}

\section{Extension to limiting Okounkov bodies of pseudoeffective divisors}\label{oklim_sec}

Theorem \ref{main1} and Theorem \ref{main2} can be easily extended to pseudoeffective divisors using the limiting Okounkov bodies. First, we recall the definition of the limiting Okounkov body.

\begin{definition}
Let $X$ be a smooth projective variety of dimension $n$, and $D$ be a pseudoeffective divisor on $X$.
The \emph{limiting Okounkov body} $\oklim_{Y_\bullet}(D)$ of $D$ with respect to an admissible flag $Y_\bullet$ is a convex subset of $\R^n$ defined as
$$
\oklim_{Y_\bullet}(D):=\lim_{\epsilon \to 0+} \okbd_{Y_\bullet}(D+\epsilon A) = \bigcap_{\epsilon >0} \okbd_{Y_\bullet}(D+\epsilon A) \text{ in $\R^n_{\geq 0}$}
$$
where $A$ is an ample divisor on $X$.\footnote{The word ``body'' usually means a compact convex set with nonempty interior. Although $\oklim_{Y_\bullet}(D)$ may not satisfy this nonempty interior condition in general, we call it the limiting Okounkov ``body".}
The definition of the limiting Okounkov body $\oklim_{Y_\bullet}(D)$ is independent of the choice of the ample divisor $A$.
\end{definition}

If $D$ is a big divisor, then $\oklim_{Y_\bullet}(D)=\okbd_{Y_\bullet}(D)$.
Note also that the same construction for $\okbd_{Y_\bullet}(D)$ is valid for a pseudoeffective divisor $D$ as long as $|D|_{\mathbb R}\neq \emptyset$. We call such $\okbd_{Y_\bullet}(D)$ a \emph{valuative Okounkov body}. We saw in  \cite{CHPW-okbd I} that the limiting Okounkov bodies $\oklim_{Y_\bullet}(D)$ reflect more naturally the numerical properties of a pseudoeffective divisor $D$ rather than the valuative Okounkov body. We refer to \cite{CHPW-okbd I, CPW-okbd II, CPW-okbd abundant} for more properties.

By slightly modifying the arguments in the proof of Theorem \ref{main1}, we obtain the following.

\begin{theorem}\label{oklim-main1}
Let $D, D'$ be pseudoeffective divisors on a smooth projective variety $X$, and $x \in X$ be a point. Then the following are equivalent:
\begin{enumerate}[wide, labelindent=3pt]
\item[$(1)$] $D \equiv_{x} D'$, that is, $D, D'$ are numerically equivalent near $x$.
\item[$(2)$] $\oklim_{\widetilde{Y}_{\bullet}}(f^*D)=\oklim_{\widetilde{Y}_{\bullet}}(f^*D')$ for every admissible flag $\widetilde{Y}_{\bullet}$ centered at $x$ defined on a smooth projective variety $\widetilde{X}$ with a birational morphism $f \colon \widetilde{X} \to X$.
\item[$(3)$] $\oklim_{\widetilde{Y}_{\bullet}}(f^*D)=\oklim_{\widetilde{Y}_{\bullet}}(f^*D')$ for every proper admissible flag $\widetilde{Y}_{\bullet}$ over $X$ centered at $x$ defined on a smooth projective variety $\widetilde{X}$ with a birational morphism $f \colon \widetilde{X} \to X$.
\item[$(4)$] $\oklim_{\widetilde{Y}_{\bullet}}(f^*D)=\oklim_{\widetilde{Y}_{\bullet}}(f^*D')$ for every infinitesimal flag $\widetilde{Y}_{\bullet}$ over $X$ centered at $x$ defined on a smooth projective variety $\widetilde{X}$ with a birational morphism $f \colon \widetilde{X} \to X$.
\end{enumerate}
\end{theorem}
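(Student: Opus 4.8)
The plan is to run the proof of Theorem~\ref{main1} essentially verbatim, replacing every valuative Okounkov body $\okbd_{Y_\bullet}(-)$ by the limiting Okounkov body $\oklim_{Y_\bullet}(-)$ and every auxiliary result of Section~\ref{proof_sec} by its limiting counterpart; Lemma~\ref{curveclass} is purely intersection-theoretic and is used unchanged. First I would record the limiting version of Lemma~\ref{lem-divzd}, namely $\oklim_{Y_\bullet}(D)=\oklim_{Y_\bullet}(P)+\oklim_{Y_\bullet}(N)$ with $\oklim_{Y_\bullet}(N)=\okbd_{Y_\bullet}(N)$ a single point, and $\oklim_{Y_\bullet}(D)=\oklim_{Y_\bullet}(D-E)+\okbd_{Y_\bullet}(E)$ for $0\le E\le N$; this is available from \cite{CHPW-okbd I} and \cite{CHPW-asyba}, and as in the big case the origin lies in $\oklim_{Y_\bullet}(P)$ whenever the chosen point is not in $\bm(P)$, since $P$ is movable.

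The remaining limiting lemmas are obtained from their big analogues through the identity $\oklim_{Y_\bullet}(D)=\bigcap_{\epsilon>0}\okbd_{Y_\bullet}(D+\epsilon A)$ for an ample $A$, the bodies on the right being nested and shrinking as $\epsilon\downarrow 0$. Thus the first-coordinate minimum of $\oklim_{Y_\bullet}(D)$ equals $\sup_{\epsilon>0}\ord_{Y_1}(||D+\epsilon A||)=\ord_{Y_1}(||D||)=\mult_{Y_1}N_x$, which gives the limiting form of Lemma~\ref{lem-Nprop}; and intersecting with the $x_n$-axis, an operation that commutes with $\bigcap_{\epsilon>0}$, produces a nested family of segments whose lengths are $(P+\epsilon A)\cdot Y_{n-1}$ by the big Lemma~\ref{lem-P.C2} applied to each big divisor $P+\epsilon A$, so that letting $\epsilon\to 0$ gives $\vol_{\R^1}\!\left(\oklim_{Y_\bullet}(P)\cap\{x_1=\cdots=x_{n-1}=0\}\right)=P\cdot Y_{n-1}$. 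Here one uses that a general complete-intersection curve $Y_{n-1}$ disjoint from $\bm(P)$ is automatically disjoint from $\bm(P+\epsilon A)$ and not contained in $\bp(P+\epsilon A)$ for every $\epsilon>0$, since $\bigcup_{\epsilon>0}\bp(P+\epsilon A)\subseteq\bm(P)$ and $\bm(P)$ has codimension $\ge 2$ because $P$ is movable. The limiting versions of Lemmas~\ref{lem-Ninf}, \ref{lem-P.Cinf}, \ref{lem-Pprop} and \ref{lem-Pinf} follow in the same manner; in particular the blow-up of $X$ at the relevant point used in the proofs of Lemmas~\ref{lem-Pprop} and \ref{lem-Pinf} goes through because the positive part of the pullback of $P$ remains movable.

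With these lemmas in place the four implications are proved exactly as in Section~\ref{proof_sec}. For $(1)\Rightarrow(2)$, given a flag $\widetilde{Y}_\bullet$ on a birational model $f\colon\widetilde{X}\to X$ centered over $x$ at a point $x'$, birational invariance of $\equiv_x$ (Proposition~\ref{prop_localnumequiv}) gives $f^*D\equiv_{x'}f^*D'$, hence $\widetilde{P}\equiv\widetilde{P}'$ and $\widetilde{N}_{x'}=\widetilde{N}_{x'}'$; since $\widetilde{N}_{x'}^c$ avoids $x'$ and the flag is centered at $x'$ we get $\oklim_{\widetilde{Y}_\bullet}(\widetilde{N}_{x'}+\widetilde{N}_{x'}^c)=\okbd_{\widetilde{Y}_\bullet}(\widetilde{N}_{x'})$, and the fact that limiting Okounkov bodies depend only on the numerical class (see \cite{CHPW-okbd I}) together with the limiting Lemma~\ref{lem-divzd} yields $\oklim_{\widetilde{Y}_\bullet}(f^*D)=\oklim_{\widetilde{Y}_\bullet}(f^*D')$. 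The implications $(2)\Rightarrow(3)$ and $(2)\Rightarrow(4)$ are trivial, while $(3)\Rightarrow(1)$ and $(4)\Rightarrow(1)$ follow as before: the limiting Lemmas~\ref{lem-Nprop} and \ref{lem-Ninf}, after a log resolution making a prime component of $N_x$ smooth at the relevant point, recover $N_x=N_x'$, and the limiting Lemmas~\ref{lem-Pprop} and \ref{lem-Pinf} together with Lemma~\ref{curveclass} recover $P\equiv P'$.

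The point requiring care is that one must not attempt to reduce directly to Theorem~\ref{main1} by perturbing $D$ and $D'$: the relation $D\equiv_x D'$ does \emph{not} imply $D+\epsilon A\equiv_x D'+\epsilon A$, because the coefficients of the components of the negative part lying off $x$ may move differently under the perturbation and thereby alter the positive parts non-numerically. The perturbation trick is therefore confined to the auxiliary lemmas, where it only ever concerns genuinely numerical and continuous quantities — asymptotic vanishing orders, intersection numbers, restricted volumes — that interact well with the nested intersection defining $\oklim$; the local numerical equivalence relation itself is handled through the limiting divisorial Zariski decomposition, not through perturbation.
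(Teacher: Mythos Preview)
Your argument is correct, but the paper finds a shortcut you explicitly ruled out. You warn that one cannot reduce to Theorem~\ref{main1} by perturbing, since $D\equiv_x D'$ need not imply $D+\epsilon A\equiv_x D'+\epsilon A$. That objection is valid as stated, but the paper neutralizes it with a preliminary reduction: since $\oklim_{\widetilde{Y}_\bullet}(f^*D)=\oklim_{\widetilde{Y}_\bullet}(f^*(D-N_x^c))$ for every flag centered over $x$, and since $D\equiv_x D'$ is unchanged upon replacing $D,D'$ by $D-N_x^c,\,D'-{N_x'}^c$, one may assume $N_x^c={N_x'}^c=0$. Under this assumption $D\equiv_x D'$ becomes $P\equiv P'$ and $N=N_x=N_x'=N'$, hence $D\equiv D'$ outright; then $D+\epsilon A\equiv D'+\epsilon A$ for every $\epsilon>0$, and Theorem~\ref{main1} applied to these big divisors gives $(1)\Rightarrow(2)$ immediately by letting $\epsilon\to 0$. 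This bypasses the need to restate the limiting form of Lemma~\ref{lem-divzd} or to invoke numerical invariance of $\oklim$ separately.

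For the converse directions $(3)\Rightarrow(1)$ and $(4)\Rightarrow(1)$, the paper's argument coincides with yours in substance: it applies Lemmas~\ref{lem-Nprop}, \ref{lem-Ninf}, \ref{lem-Pprop}, \ref{lem-Pinf} to the big divisors $D+\epsilon A$, reads off $N_x^\epsilon$ and the numerical class of $P^\epsilon$, and passes to the limit using $N_x^\epsilon\to N_x$ and $P^\epsilon\to P$. Your more explicit packaging of this as ``limiting versions of the lemmas,'' together with the verification that the relevant hypotheses on $Y_{n-1}$ persist for all $\epsilon>0$ via $\bp(P+\epsilon A)\cup\bm(P+\epsilon A)\subseteq\bm(P)$, is a welcome clarification of what the paper leaves implicit. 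The net effect is that your route is slightly longer but more self-contained, while the paper's reduction to $N_x^c=0$ makes the forward implication a one-line consequence of the big case.
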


\begin{proof}
Let $D=P+N_x+N_x^c$ and $D'=P'+N_x'+{N_x'}^c$ be the refined divisorial Zariski decomposition at $x$. Then $\oklim_{\widetilde{Y}_\bullet}(f^*D)=\oklim_{\widetilde{Y}_\bullet}(f^*(D-N_x^c))$ for all admissible flag $\widetilde{Y}_\bullet$ over $X$ centered at $x$ (see \cite[Lemma 3.9]{CHPW-asyba}). By replacing $D, D'$ by $D-N_x, D'-{N_x'}^c$, we may assume that $N_x^c={N_x'}^c=0$.
We fix an ample divisor $A$ on $X$.

\smallskip

\noindent $(1) \Rightarrow (2)$:
For any number $\epsilon >0$, the big divisors $D+\epsilon A, D'+\epsilon A$ are numerically equivalent near $x$. By Theorem \ref{main1}, the Okounkov bodies of pull-backs of $D + \epsilon A$ and $D'+\epsilon A$ coincide for all admissible flags over $X$ centered at $x$. By letting $\epsilon \mapsto 0$, we obtain the implication $(1) \Rightarrow (2)$.

\smallskip

\noindent $(2) \Rightarrow (3)$ and $(2)\Rightarrow (4)$: It is obvious.

\smallskip

\noindent $(3) \Rightarrow (1)$ and $(4) \Rightarrow (1)$:  For any number $\epsilon \geq 0$, let $D+\epsilon A=P^\epsilon + N_x^{\epsilon} + N_x^{c, \epsilon}$ be the refined divisorial Zariski decomposition at $x$. Note that $\lim_{\epsilon \to 0} P^{\epsilon} = P$ and $\lim_{\epsilon \to 0} N_x^{\epsilon} = N_x $.
By Lemma \ref{lem-Nprop} under the condition (3) or Lemma \ref{lem-Ninf} under the condition (4), one can read off $N_x^{\epsilon}$ from the Okounkov bodies of pull-backs of $D + \epsilon A$. By letting $\epsilon \mapsto 0$, we can recover $N_x$ from the limiting Okounkov bodies of pull-backs of $D$. Similarly, using Lemmas \ref{lem-Pprop} and \ref{lem-Pinf}, we can recover $P$ from the limiting Okounkov bodies of pull-backs of $D$. Thus we obtain the implications $(3) \Rightarrow (1)$ and $(4) \Rightarrow (1)$.
\end{proof}

We can similarly prove the following theorem as in the proof of Theorem \ref{main2}. We leave the details of the proofs to the interested readers.

\begin{theorem}\label{oklim-main2}
Let $D, D'$ be pseudoeffective divisors on a smooth projective variety $X$.
For a fixed point $x\in X$, consider the decompositions as in \emph{(\ref{sharp})} in Introduction
$$
D=P+N_x^{sm} + N_x^{sing} + N_x^c,\;\; D'=P'+{N'}_x^{sm} + {N'}_x^{sing} + {N'}_x^c.
$$
Then the following are equivalent:
\begin{enumerate}[wide, labelindent=3pt]
\item[$(1)$] $P \equiv P', N_x^{sm}={N'}_x^{sm}, \okbd_{Y_\bullet}(N_x^{sing}) = \okbd_{Y_\bullet}({N'}_x^{sing})$ for every admissible flag $Y_\bullet$ centered at $x$.
\item[$(2)$] $\oklim_{Y_\bullet}(D)=\oklim_{Y_\bullet}(D')$ for every admissible flag $Y_\bullet$ on $X$ centered at $x$.
\item[$(3)$] $\oklim_{\widetilde{Y}_\bullet}(f^*D)=\oklim_{\widetilde{Y}_\bullet}(f^*D')$ for every induced proper admissible flag $\widetilde{Y}_\bullet$ over $X$ centered at $x$ defined on a smooth projective variety $\widetilde{X}$ with a birational morphism $f \colon \widetilde{X} \to X$.
\item[$(4)$]  $\oklim_{\widetilde{Y}_\bullet}(f^*D)=\oklim_{\widetilde{Y}_\bullet}(f^*D')$ for every infinitesimal admissible flag $\widetilde{Y}_\bullet$ on $\widetilde{X}$ induced by an admissible flag $Y_\bullet$ on $X$ centered at $x$ where $f \colon \widetilde{X} \to X$ is a $Y_\bullet$--admissible log resolution of $(X, N_x^{sing} + {N'}_x^{sing})$.
\end{enumerate}
\end{theorem}

As a consequence of Theorem \ref{oklim-main2}, we can recover one of the main results of \cite{CHPW-okbd I} (cf. \cite[Theorem A]{Jow}), which states that if $D,D'$ are pseudoeffective divisors on a smooth projective variety $X$, then
$$
D \equiv D' ~\Longleftrightarrow~\oklim_{Y_\bullet}(D)=\oklim_{Y_\bullet}(D')~\text{ for all admissible flags $Y_\bullet$ on $X$.}
$$

\end{document}